\documentclass[journal]{IEEEtran}
%


\usepackage{cite}
\usepackage[pdftex]{graphicx}
\usepackage{algorithmic}
\usepackage{array}
\ifCLASSOPTIONcompsoc
 \usepackage[caption=false,font=normalsize,labelfont=sf,textfont=sf]{subfig}
\else
 \usepackage[caption=false,font=footnotesize]{subfig}
\fi
\usepackage{stfloats}
\ifCLASSOPTIONcaptionsoff
 \usepackage[nomarkers]{endfloat}
\let\MYoriglatexcaption\caption
\renewcommand{\caption}[2][\relax]{\MYoriglatexcaption[#2]{#2}}
\fi
\usepackage{url}
\usepackage{booktabs} 
\usepackage{amsmath,lmodern}
\usepackage{bibentry}
\usepackage{amsthm}
\usepackage{amssymb}
\usepackage{pifont}
\usepackage{mathtools}
\usepackage{amsmath,mleftright}
\usepackage{xparse}
\usepackage{makecell}

\NewDocumentCommand{\evalat}{sO{\big}mm}{%
  \IfBooleanTF{#1}
   {\mleft. #3 \mright|_{#4}}
   {#3#2|_{#4}}%
}

\newcommand{\comment}[1]{#1}

\newcommand\mynorm[1]{\left\lVert#1\right\rVert}
\newtheorem{theorem}{Theorem}
\newtheorem{lemma}{Lemma}
\newtheorem{assumption}{Assumption}
\newtheorem{definition}{Definition}

\newtheorem{example}{Example}

\newtheorem{remark}{Remark}

\newenvironment{ealign}
    {\begin{equation}
    \begin{aligned}
    }
    { 
    \end{aligned} 
    \end{equation}
    }


\begin{document}
%
\title{Time-variation in online nonconvex optimization enables escaping from spurious local minima}
%
%
%

\author{Yuhao Ding,
        Javad Lavaei, and Murat Arcak
\thanks{Y. Ding and J. Lavaei are with the Department of Industrial Engineering and Operations Research, University of California, Berkeley, CA 94709 USA (e-mail: yuhao\_ding@berkeley.edu; lavaei@berkeley.edu).}
\thanks{M. Arcak is with the Department of Electrical Engineering and Computer Sciences, University of California, Berkeley, CA 94709 USA (e-mail: arcak@berkeley
edu).}
\thanks{\comment{A shorter version of this paper has been submitted for the conference publication \cite{ding2019escaping}.
The new additions to this version include the analysis of optimization problems with equality constraints (as opposed to unconstrained optimization in the conference version), the derivation of the time-varying projected gradient flow system, the role of time variation of the constraints, a unified view of the analysis of equality-constrained problems and  unconstrained problems by introducing the Langrange functional, and the time-varying analysis for jumping behavior.}}
}

%
%

%

\maketitle

\begin{abstract}
A major limitation of online algorithms that track the optimizers of time-varying nonconvex optimization problems is that they focus on a specific local minimum trajectory, which may lead to poor spurious local solutions. In this paper, we show that the natural temporal variation may help simple online tracking methods find and track time-varying global minima. To this end, we investigate the properties of a time-varying projected gradient flow system with inertia, which can be regarded as the continuous-time limit of (1) the optimality conditions for a discretized sequential optimization problem with a proximal regularization and (2) the online tracking scheme. 
\comment{We introduce the notion of the dominant trajectory and show that the inherent temporal variation could re-shape the landscape of the Lagrange functional and help a proximal algorithm escape the spurious local minimum trajectories if the global minimum trajectory is dominant. For a problem with twice continuously differentiable objective function
and constraints,}
sufficient conditions are derived to guarantee that no matter how a local search method is initialized, it will track a time-varying global solution after some time. The results are illustrated on a benchmark example with many local minima.

\end{abstract}

\begin{IEEEkeywords}
Time-varying optimization, nonconvex optimization, stability analysis.
\end{IEEEkeywords}

%
\IEEEpeerreviewmaketitle
\section{Introduction}
%
%
%
%
\IEEEPARstart{I}{n} this paper, we study the following equality-constrained time-varying optimization problem:
\begin{ealign} \label{eq: constrained time-varying optimization}
\min_{x(t)\in \mathbb{R}^n} &\quad f(x(t),t)\\
\text{s.t.} &\quad g(x(t),t)=0
\end{ealign}

\noindent where $t\geq 0$ denotes the time and  $x(t)$ is the optimization variable that depends on $t$. 
Moreover, the objective function $\comment{f}: \mathbb{R}^n\times [0,\infty)\rightarrow \mathbb{R}$ and the constraint function $g(x,t)=(g_1(x,t),\ldots,g_m(x,t))$ with $\comment{g_k}: \mathbb{R}^n\times [0,\infty)\rightarrow \mathbb{R}$ for $k=1,...,m$ are assumed to be \comment{twice continuously differentiable in state $x$ and continuously differentiable in time $t$.}
For each  time $t$, the function $f(x,t)$ could potentially be nonconvex  in $x$ with many local minima and the function $g(x,t)$ could also potentially be nonlinear in $x$, leading to a nonconvex feasible set. The objective is to solve the above problem online under the assumption that at any given time $t$ the function $f(x,t^\prime)$ and $g(x,t^\prime)$ are known for all $t^\prime\leq t$ while no knowledge about $f(x,t^\prime)$ or $g(x,t^\prime)$ may be available for any $t^\prime>t$. Therefore, the problem \eqref{eq: constrained time-varying optimization} cannot be minimized off-line and should be solved sequentially. Another
issue is that the optimization problem at each time instance could be highly complex due to NP-hardness, which
is an impediment to finding its global minima.
 This paper aims to investigate under what conditions simple local search algorithms can  solve the above online optimization problem to almost global optimality after some finite time.  More precisely, the goal is  to devise an algorithm that can track a global solution of \eqref{eq: constrained time-varying optimization} as a function of time $t$ with some error at the initial time and a diminishing error after some time.

If $f(x,t)$ and $g(x,t)$ do not change over time, the problem reduces to a classic (time-invariant) optimization problem. It is known that  simple  local search methods, such as stochastic gradient descent (SGD) \cite{hazan2016introduction}, may be able to find a global minimum of such time-invariant problems (under certain conditions) for almost all initializations due to the randomness embedded in SGD \cite{jin2017escape,ge2015escaping,kleinberg2018alternative}. The objective of this paper is to significantly extend the above result from a single optimization problem to infinitely-many problems parametrized by  time $t$. In other words, it is desirable to investigate the following question:  \textbf{Can the temporal variation  in the landscape of  time-varying nonconvex optimization problems enable online local search methods to find and track global trajectories?} To answer this question,  we study a first-order time-varying ordinary differential equation (ODE), which is the counterpart of the classic projected gradient flow system for time-invariant optimization problems \cite{tanabe1974algorithm} and serves as a continuous-time limit of the discrete online tracking method for \eqref{eq: constrained time-varying optimization} with the proximal regularization. This ODE is given as
\begin{equation} \label{eq: ODE3} \tag{P-ODE}
\dot{x}(t) = -\frac{1}{\alpha}\mathcal{P}(x(t),t) \nabla_x f(x(t),t) - \mathcal{Q}(x(t),t)g^\prime(x(t),t)
\end{equation}
where $\alpha>0$ is a constant parameter named \textbf{inertia} due to a \textbf{proximal regularization}, \comment{ $g^\prime(z,t)=\frac{\partial g(z,t)}{\partial t}$},  $\mathcal{P}(x(t),t)$ and $\mathcal{Q}(x(t),t)$ are matrices related to the Jacobian of $g(x,t)$ that will be derived in detail later. A system of the form \eqref{eq: ODE3} is called a \textbf{time-varying projected gradient system with inertia $\alpha$}. The behavior of the solutions of this system initialized at different points depends on the value of $\alpha$. In the unconstrained case, this ODE reduces to the  \textbf{time-varying gradient system with inertia $\alpha$} given as 
\begin{equation}  \label{eq: ODE2}
 \dot{x}\comment{(t)}= -\frac{1}{\alpha}\nabla_x f(x,t)
      \tag{ODE}
\end{equation}
In what follows, we offer a motivating example without constraints (to simplify the visualization) before stating the goals of this paper.

\subsection{Motivating example}
\begin{example} \label{ex:bifurcation}
\emph{Consider $f(x,t):=\bar{f}(x-b \sin(t))$, where
\[ \bar{f}(y):=\frac{1}{4}y^4+\frac{2}{3}y^3-\frac{1}{2}y^2-2y\]
This time-varying objective has a spurious (non-global) local minimum trajectory at $-2+b\sin(t)$, a local maximum trajectory at $-1+b\sin(t)$, and a global minimum trajectory at $1+b\sin(t)$. In Figure \ref{fig:bifurcation}, we show a bifurcation phenomenon numerically. The red lines are the solutions of \eqref{eq: ODE3} with the initial point $-2$. In the case with $\alpha=0.3$ and $b=5$, the solution of \eqref{eq: ODE3} winds up in the region of attraction of the global minimum trajectory. However, for the case with $\alpha=0.1$ and $b=5$, the solution of \eqref{eq: ODE3} remains in the region of attraction of the spurious local minimum trajectory.
In the case with $\alpha=0.8$ and $b=5$, the solution of \eqref{eq: ODE3} fails to track any local minimum trajectory.
In the case with $\alpha=0.1$ and $b=10$, the solution of \eqref{eq: ODE3} winds up in the region of attraction of the global minimum trajectory.}
\begin{figure*}[ht]
\centering
\subfloat[$\alpha=0.3,b=5$]{\label{fig:bifurcation04}\includegraphics[width=0.24 \linewidth]{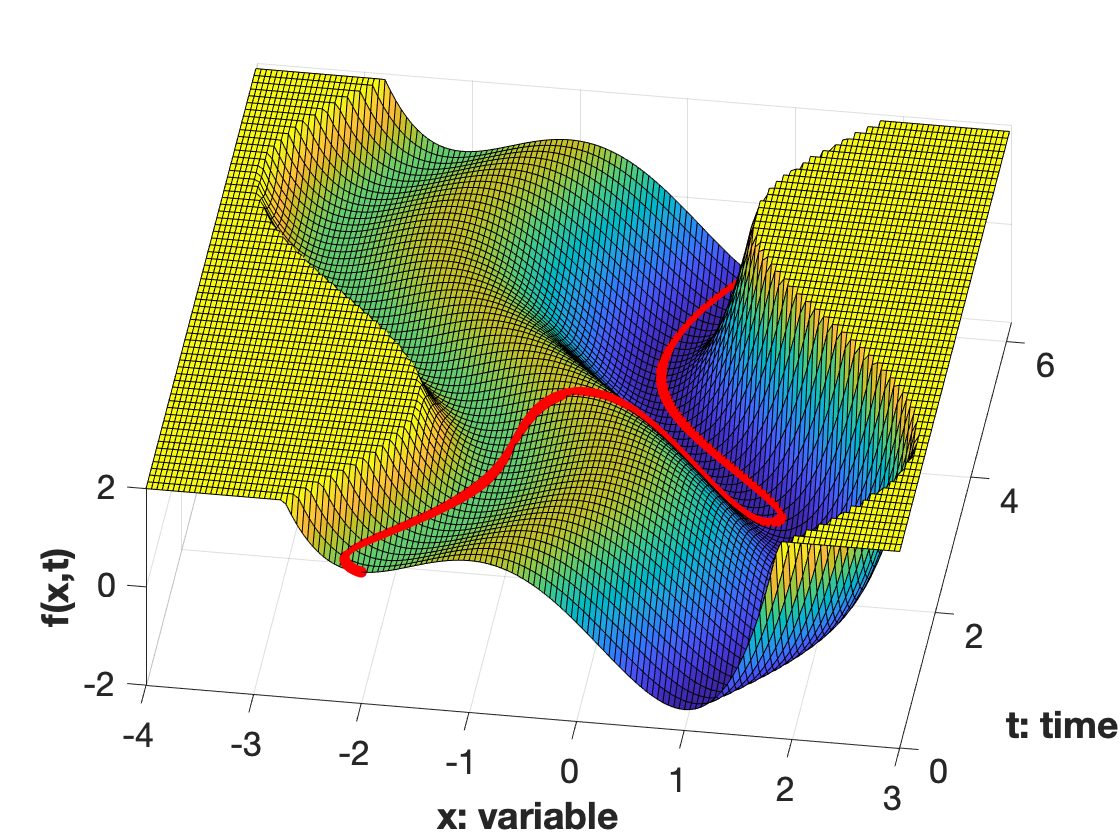}}
\subfloat[$\alpha=0.1,b=5$]{\label{fig:bifurcation02}\includegraphics[width=0.24 \linewidth]{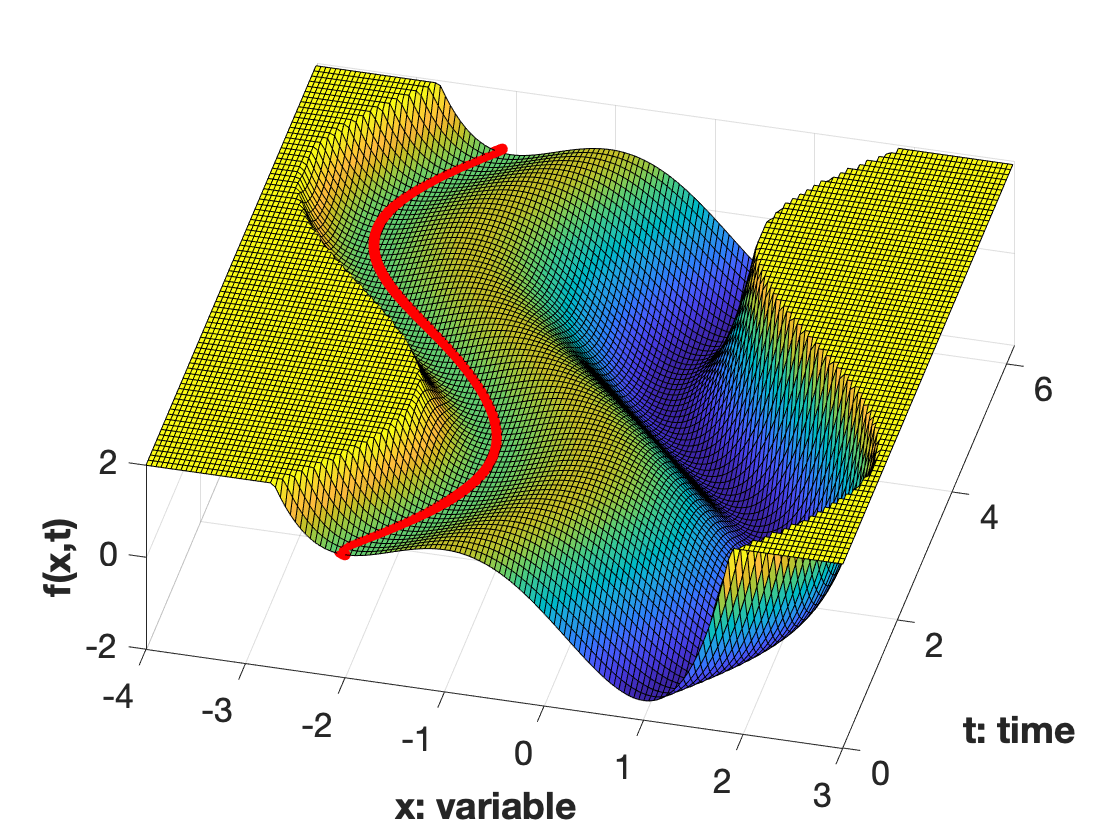}}
\subfloat[$\alpha=0.8,b=5$]{\label{fig:bifurcation_x1}\includegraphics[width=0.24 \linewidth]{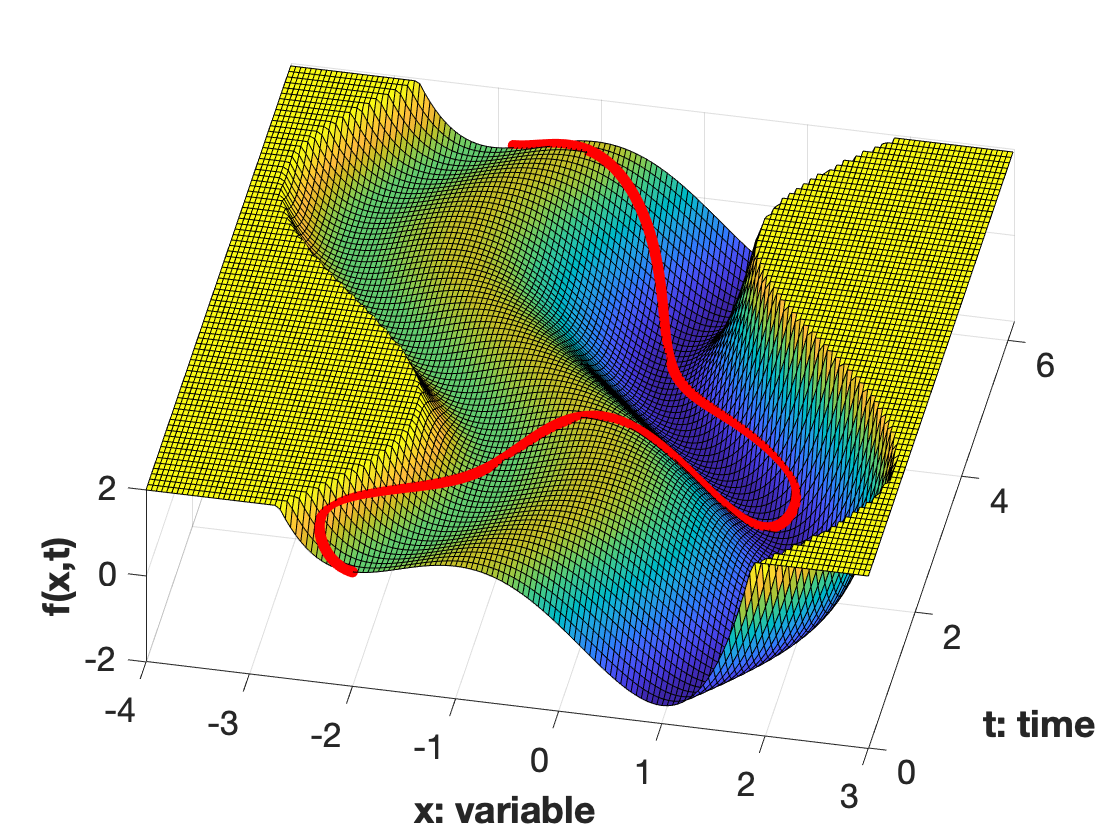}}
\subfloat[$\alpha=0.1,b=10$]{\label{fig:bifurcation_b20}\includegraphics[width=0.24 \linewidth]{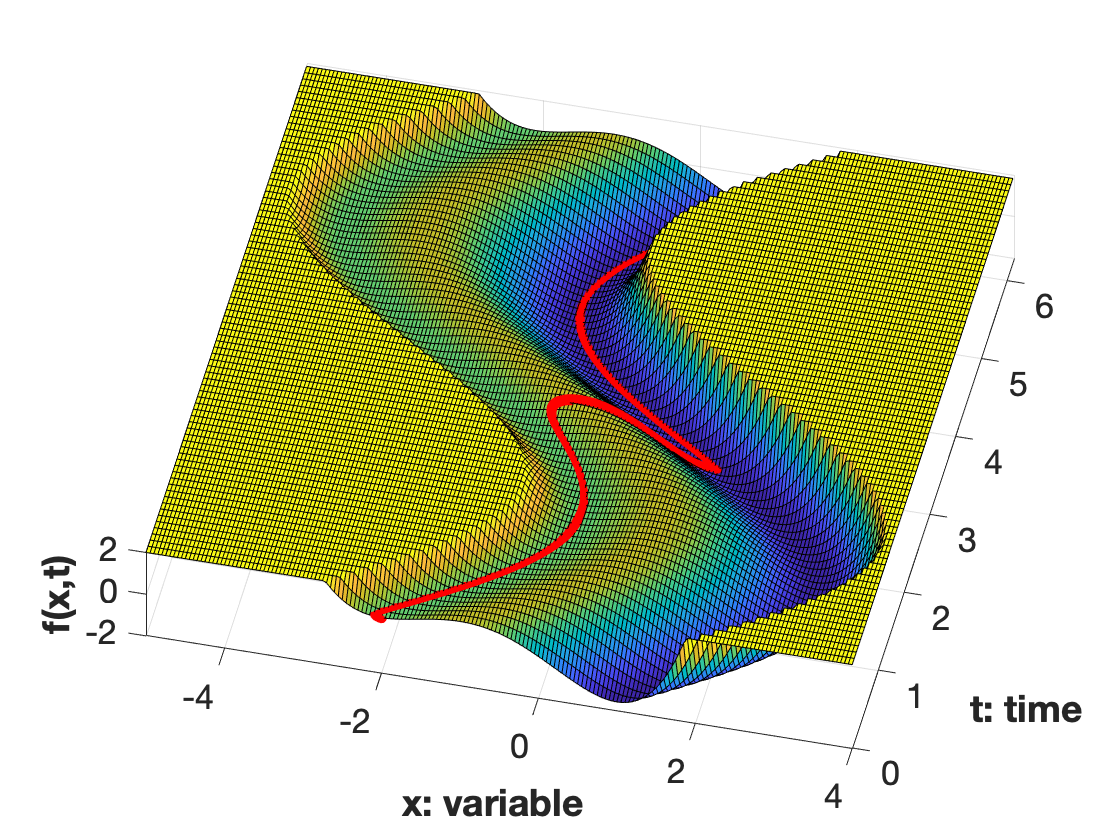}}
\caption{Illustration of Example \ref{ex:bifurcation} (in order to increase visibility, the objective function values are rescaled). Jumping from a spurious local minimum trajectory to a global minimum trajectory occurs in Figure \ref{fig:bifurcation04} and \ref{fig:bifurcation_b20} when the inertia $\alpha$ and the change (controlled by the parameter $b$) of local minimum trajectory are appropriate.}
\label{fig:bifurcation}
\end{figure*}

\emph{
Two observations can be made here. First, jumping from a local minimum trajectory to a better trajectory tends to occur with the help of a relatively large inertia when the local minimum trajectory changes the direction abruptly and  there happens to exist a better local minimum trajectory in the direction of the inertia. Second, when the inertia $\alpha$ is relatively small, the solution of \eqref{eq: ODE3} tends to track a local (or global) minimum trajectory closely and converges to that trajectory quickly.}
\end{example}
\comment{
\begin{example} \label{ex:power}
\emph{Consider the time-varying optimal power flow (OPF) problem, as the most fundamental problem for the operation of electric power grids that aims to match supply with demand while satisfying network and physical constraints. Let $f(x,t)$ be the function to be minimized at time $t$, which is the sum of the total energy cost and a penalty term taking care of all the inequality constraints of the problem. Let $g(x,t)=0$ describe the time-varying demand constraint. Assume that the load data corresponds to the California data for August 2019. As discussed in \cite{Julie2020}, this time-varying OPF has 16 local minima at t=0 and many more for some values of $t>0$. However, if (ODE) is run from any of these local minima, the 16 trajectories will all converge to the globally optimal trajectory, as shown in Figure \ref{fig:power}. This observation has been made in \cite{Julie2020} for a discrete-time version of the problem, but it also holds true for the continuous-time (ODE) model. }
\begin{figure}[ht]
\centering
  \includegraphics[width=0.8 \linewidth]{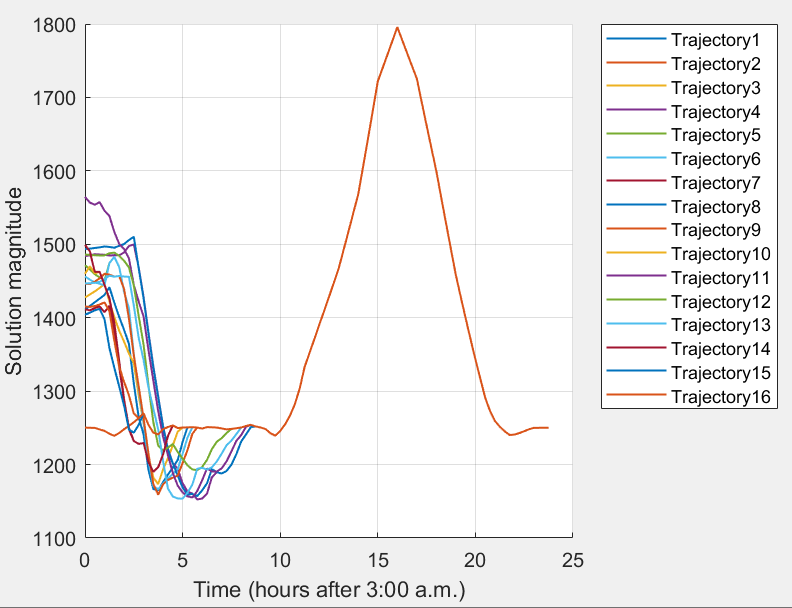}
  \caption{$|x(t)|$ (magnitude of the solution of \eqref{eq: ODE2}).}
  \label{fig:power}
\end{figure}
\end{example}}
\subsection{Our contributions}
To mathematically study the observations made in Example \ref{ex:bifurcation} and Example \ref{ex:power} for a general time-varying nonconvex optimization problem with equality constraints,  we focus on the aforementioned time-varying projected gradient flow system with inertia $\alpha$ as a continuous-time limit of an online updating scheme for \eqref{eq: constrained time-varying optimization}. 
We first introduce a time-varying Lagrange functional to unify the analysis of unconstrained problems and 
equality-constrained problems, and make the key assumption that the time-varying Lagrange functional is locally one-point \comment{strongly convex} around each local minimum trajectory. This assumption is justified by the second-order sufficient optimality conditions.
A key property of \eqref{eq: ODE3} is that its solution will remain in the time-varying feasible region if the initial point is feasible for \eqref{eq: constrained time-varying optimization}, which allows us to use the Lyapunov technique without worrying about the feasibility of the solution. 
Then, we show that the time-varying projected gradient flow system with inertia $\alpha$ is a continuous-time limit of the Karush–Kuhn–Tucker (KKT) optimality conditions for a discretized sequential optimization problem with a proximal regularization. The existence and uniqueness of the solution for such ODE is proven.
    
\comment{As a main result of this work, it is proven that the natural temporal variation of the time-varying optimization problem encourages the exploration of the state space and re-shaping the landscape of the objective function (in the unconstrained case) or the Langrange functional (in the constrained case) by making it one-point strongly convex over a large region during some time interval. 
We introduce the notion of the dominant trajectory and show that if a given spurious local minimum trajectory is dominated by the global minimum trajectory, then the temporal variation of the time-varying optimization would trigger escaping the spurious local minimum trajectory for free. }
We develop two sufficient conditions under which the ODE solution will jump from a certain local minimum trajectory to a more desirable local minimum trajectory. We then derive sufficient conditions on the inertia $\alpha$ to guarantee that the solution of \eqref{eq: ODE3} can track a global minimum trajectory.
To illustrate how  the time variation nature of an online optimization problem promotes escaping a spurious minimum trajectory, we offer a case study  with many shallow minimum trajectories. 
\subsection{Related work}
\textbf{Online time-varying optimization problems}: Time-varying optimization problems of the form \eqref{eq: constrained time-varying optimization} arise in the real-time optimal power flow problem \cite{tang2017real,hauswirth2018time} for which the power loads and renewable generations are time-varying and operational decisions should be made every 5 minutes, as well as in the real-time estimation of the state of a nonlinear dynamic system \cite{rao2003constrained}.
Other examples include model predictive control \cite{binder2001introduction}, time-varying compressive sensing \cite{salman2013sparse,balavoine2014discrete} and online economic optimization \cite{kadam2003towards,zavala2009line}.
There are many researches on the design of efficient online algorithms for tracking the optimizers of time-varying convex optimization problems \cite{simonetto2016class,fazlyab2016self,bernstein1804online,simonetto2017time}. With respect to time-varying nonconvex optimization problems, the work \cite{guddat1990parametric} presents a comprehensive theory on the structure and singularity of the KKT trajectories for time-varying optimization problems. On the algorithm side, \cite{tang2017real} provides regret-type results in the case where the constraints are lifted to the objective function via penalty functions. \cite{tang2018running} develops a running regularized primal-dual gradient algorithm to track a KKT trajectory, and offers asymptotic bounds on the tracking error. \cite{massicot2019line} obtains an ODE to approximate the KKT trajectory and derives an algorithm based on a predictor-corrector method to track the ODE solution. 

Recently, \cite{fattahi2019absence} proposed the question of whether the natural temporal variation in a time-varying nonconvex optimization problem could help a local tracking method escape spurious local minimum trajectories. 
It developed a differential equation to characterize this phenomenon (which is the basis of the current work), but it lacked mathematical conditions to guarantee this desirable behavior. The paper \cite{Julie2020} also studies this phenomenon in the context of power systems and verifies on real data for California that the natural load variation enables escaping local minima of the optimal power flow problem. The current work significantly generalizes the results of \cite{fattahi2019absence} and \cite{Julie2020} by mathematically studying when such an escaping is possible. 

\vspace{2mm}

\noindent\textbf{Local search methods for global optimization}:  Nonconvexity is inherent  in
many real-world problems:  the classical compressive sensing and matrix completion/sensing \cite{donoho2006most,candes2010matrix,candes2009exact},  training of deep neural networks \cite{li2018visualizing}, the optimal power flow problem \cite{low2014convex}, and others.
From the classical complexity theory, this nonconvexity is perceived to be the main contributor to the intractability of these problems. However, it has been recently shown that  simple local search methods, such as gradient-based algorithms, have a superb performance in solving nonconvex optimization problems. For example, \cite{lee2017first} shows that the gradient descent with a random initialization could avoid the saddle points almost surely, and \cite{jin2017escape} and \cite{ge2015escaping} prove that a perturbed gradient descent and SGD could escape the saddle points efficiently. Furthermore, it has been shown that nearly-isotropic classes of problems in matrix completion/sensing \cite{bhojanapalli2016global,ge2016matrix,zhang2019sharp}, robust principle component analysis \cite{fattahi2018exact,josz2018theory}, and dictionary recovery \cite{sun2016complete} have benign landscape, implying that they are free of spurious local minima. The work \cite{kleinberg2018alternative} proves that SGD could help escape sharp local minima of a loss function by taking the alternative view that SGD works on a convolved (thus smoothed) version of the loss function. However, these results are all for  time-invariant optimization problems for which the landscape is time-invariant. In contrast, many real-world problems should be solved sequentially over time with time-varying data. Therefore, it is essential to study the effect of the temporal variation on the landscape of  time-varying nonconvex optimization problems.

\vspace{2mm}
\noindent\textbf{Continuous-time interpretation of discrete numerical algorithms}: Many iterative numerical optimization algorithms for time-invariant optimization problems can be interpreted as a discretization of a continuous-time process. Then, several new insights have been obtained due to the known results for continuous-time dynamical systems \cite{khalil2002nonlinear,hale1980ODE}. Perhaps, the simplest and oldest example is the gradient flow system for the gradient descent algorithm with an infinitesimally small step size. The recent papers \cite{su2014differential,krichene2015accelerated,wibisono2016variational} study accelerated gradient methods for convex optimization problems from a continuous-time perspective. In addition, the continuous-time limit of the gradient descent is also employed to analyze various non-convex optimization problems, such as deep linear neural networks \cite{saxe2013exact} and matrix regression \cite{gunasekar2017implicit}. It is natural to analyze the continuous-time limit of an online algorithm for tracking a KKT trajectory of  time-varying optimization problem \cite{simonetto2016class,tang2018running,massicot2019line,fattahi2019absence}. 
\subsection{Paper organization}
This paper is organized as follows. Section \ref{sec: problem formulation} presents some preliminaries for time-varying optimization with equality constraints and the derivation of time-varying projected gradient flow with inertia. 
Section \ref{sec: change of variable} offers an alternative view on the landscape of time-varying nonconvex optimization problems after a change of variables and explains the role of the time variation of the constraints. 
Section \ref{sec: main results} analyzes the jumping, tracking and escaping behaviors of local minimum trajectories. 
Section \ref{sec:numercial example} illustrates the phenomenon that the time variation of an online optimization problem can assist with escaping spurious local minimum trajectories, by working on a benchmark example with many shallow minimum trajectories. Concluding remarks are given in Section \ref{sec:conclusion}. 
\subsection{Notation}
The notation $\mynorm{\cdot}$ represents the Euclidean norm.  The interior of the interval $\bar{I}_{t,2}$ is denoted by $\text{int}(\bar{I}_{t,2})$. The symbol $\mathcal{B}_r(h(t))=\{x\in \mathbb{R}^n: \mynorm{x-h(t)}\leq r\}$ denotes the region centered around a trajectory $h(t)$ with radius $r$ at time $t$. We denote the solution of $\dot{x}=f(x,t)$ starting from $x_0$ at the initial time $t_0$ with $x(t,t_0,x_0)$ or the short-hand notation $x(t)$ if the initial condition $(t_0,x_0)$ is clear from the context.

\section{Preliminaries and Problem Formulation} \label{sec: problem formulation}
\subsection{Time-varying optimization with equality constraints}
The first-order KKT conditions for the time-varying optimization \eqref{eq: constrained time-varying optimization} are as follows:
\begin{subequations}\label{eq: constrained stationary condition}
\begin{align} 
    0=&\nabla_x f(x(t),t)+\mathcal{J}_g(x(t),t)^\top \lambda(t)\label{eq: constrained stationary condition 1}\\
0=&g(x(t),t) \label{eq: constrained stationary condition 2}
\end{align}  
\end{subequations}
where \comment{$\mathcal{J}_g(z,t)\coloneqq\frac{\partial g(z,t) }{\partial z}$} denotes the Jacobian of $g(\cdot,\cdot)$ with respect to the first argument and $\lambda(t)\in \mathbb{R}^m$ is a Lagrange multiplier associated with the equality constraint. We first make some assumptions below.
\begin{assumption}\label{assumption:smoothness}
$f: \mathbb{R}^n \times [0, \infty) \rightarrow \mathbb{R}$ is twice continuously differentiable in $x\in\mathbb{R}^n$ and continuously differentiable in $t\geq 0$.
$g_k: \mathbb{R}^n \times [0, \infty) \rightarrow \mathbb{R}$ is twice continuously differentiable in $x\in\mathbb{R}^n$ and twice continuously differentiable in $t\geq 0$ for $k=1,\ldots,m$. Moreover, at any given time $t$, $f(x,t)$ is uniformly bounded from below over the set $\{x\in\mathbb{R}^n:g(x,t)=0\}$, meaning that there exists a constant $M$ such that $f(x,t)\geq M$  for all $x\in \{x\in\mathbb{R}^n:g(x,t)=0\}$ and $t\geq0$.
\end{assumption}
\begin{assumption} \label{assumption: feasible}
The feasible set at $t$ defined as
\begin{equation*}
    \mathcal{M}(t)\coloneqq\{x\in \mathbb{R}^n: g(x,t)=0\}
\end{equation*}
is nonempty for all $t\geq 0$.
\end{assumption}
\comment{
\begin{assumption} \label{assumption: regular of constraints}
For all $t\geq0$ and $x\in \mathcal{M}(t)$, the matrix $\mathcal{J}_g(x,t)$ has full row-rank.
\end{assumption}}
\comment{
\begin{remark}
Although Assumption \ref{assumption: regular of constraints} is somewhat stronger than the Linear independence constraint qualification \cite{bertsekas2016nonlinear}, it is necessary for our following analysis because with different values of $\alpha$ and different initial points, the solution of \eqref{eq: ODE3} may land anywhere in the feasible region. Furthermore, Sard’s theorem \cite{sard1942measure} ensures that if the constraint function $g(\cdot,t)$ is sufficiently smooth, then the set of values of $g(\cdot,t)$, denoted as $\mathcal{S}$(t), for which $\mathcal{J}_g(x,t)$ is not full row-rank has measure 0. Thus, Assumption 3 is satisfied if $0 \notin \mathcal{S}(t)$ where $\mathcal{S}(t)$ is only a set with measure 0.
Finally, if the inertia parameter $\alpha$ is fixed and the initial point of \eqref{eq: ODE3} is a local solution, then the work \cite{fattahi2019absence} provides a sophisticated proof for the existence and uniqueness of the solution for a special class of \eqref{eq: ODE3} under a minor assumption that the Jacobian has full-row rank only {at the discrete local trajectories} (which is defined in the paragraph after equation \eqref{eq: regularized constrained opt} in our work). However, to be able to study the solution of \eqref{eq: ODE3} for all $\alpha>0$ and any initial feasible point and keep the focus of the paper on studying the escaping behavior, we made Assumption 3.
\end{remark}}
Under Assumption \ref{assumption: regular of constraints}, the matrix $\mathcal{J}_g(x(t),t)\mathcal{J}_g(x(t),t)^\top$ is invertible and therefore $\lambda(t)$ in \eqref{eq: constrained stationary condition 1} can be written as 
\begin{align} \label{eq: lambda}
\lambda(t)&= - (\mathcal{J}_g(x(t),t) \mathcal{J}_g(x(t),t)^\top)^{-1}\mathcal{J}_g(x(t),t) \nabla_x f(x(t),t)
\end{align} 
Since $\lambda(t)$ is written as a function of $x(t)$ in \eqref{eq: lambda}, we also denote it as $\lambda(x(t),t)$. Now, \eqref{eq: constrained stationary condition 1} can be written as 
\begin{align}  \label{eq: constrained stationary condition 1 after replacing lambda}
\nonumber0= &\Big[I_n-\mathcal{J}_g(x(t),t)^\top(\mathcal{J}_g(x(t),t) \mathcal{J}_g(x(t),t)^\top)^{-1}\\&\mathcal{J}_g(x(t),t)\Big] \nabla_x f(x(t),t)
\end{align}
where $I_n$ is the identity matrix in $\mathbb{R}^{n\times n}$. For the sake of readability, we introduce the symbolic notation
\begin{align*}
\mathcal{P}(x(t),t)\coloneqq& I_n-\mathcal{J}_g(x(t),t)^\top(\mathcal{J}_g(x(t),t) \mathcal{J}_g(x(t),t)^\top)^{-1}\\
&\mathcal{J}_g(x(t),t)
\end{align*}
which is the orthogonal projection operation onto $T_x^t$, where $T_x^t$ denotes the tangent plane of  $g(x(t),t)$ at the point $x(t)$ and the time $t$. 
It is convenient and conventional to introduce the time-varying Lagrange functional
\begin{equation} \label{eq: lagrange with lambda}
    L(x,\lambda,t)= f(x,t) +\lambda g(x,t)
\end{equation}
In terms of this functional, \eqref{eq: constrained stationary condition 1 after replacing lambda} can be written as 
\begin{equation}
    0=\nabla_x L(x,\lambda,t)
\end{equation}
where $\lambda$ is given in \eqref{eq: lambda}. \comment{Here, $\nabla_x L(x,\lambda,t)$ means first taking the partial gradient with respect to the first argument and then using the formula \eqref{eq: lambda} for $\lambda$.}
\comment{Since the solution is time-varying, we define the notion of the local (or global) minimum trajectory below.
\begin{definition}
A continuous trajectory $h: I_t \rightarrow \mathbb{R}^n$, where $I_t\subseteq [0,\infty)$, is said to be a \textbf{local (or global) minimum trajectory} of the time-varying optimization \eqref{eq: constrained time-varying optimization} if each point of $h(t)$ is a local (or global) minimum of the time varying optimization \eqref{eq: constrained time-varying optimization} for every $t \in I_t$. 
\end{definition}
}
\comment{
In this paper, we focus on the case when the local minimum trajectories will not cross, bifurcate or disappear by assuming the following uniform regularity condition.
\begin{assumption} \label{ass: non-singular}
For each local minimum trajectory $h(t)$, its domain $I_t$ is $[0,\infty)$ and $h(t)$ satisfies the second-order sufficient optimality conditions uniformly, meaning that $\nabla_{xx}^2 L(h(t),\lambda,t)$ is positive definite on $T_{h(t)}^t=\{y:\mathcal{J}_g(h(t),t)^\top y=0\}$ for all $t \in [0,\infty)$.
\end{assumption}
\begin{lemma} \label{lemma: differentiable and isolated}
Under Assumptions \ref{assumption:smoothness}-\ref{ass: non-singular}, each local minimum trajectory $h(t)$ is differentiable and isolated, and therefore it can not bifurcate or merge with other local minimum trajectories.
\end{lemma}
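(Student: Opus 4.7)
The plan is to apply the implicit function theorem (IFT) to the KKT mapping and read off both differentiability and isolation from its local uniqueness conclusion. Define
\begin{equation*}
F(x,\lambda,t) := \begin{pmatrix} \nabla_x f(x,t) + \mathcal{J}_g(x,t)^\top \lambda \\ g(x,t) \end{pmatrix},
\end{equation*}
which is $C^1$ jointly in $(x,\lambda,t)$ by Assumption \ref{assumption:smoothness}. Fix any $t_0 \in [0,\infty)$; since $h(t_0)$ is a local minimum, it satisfies the first-order conditions \eqref{eq: constrained stationary condition} with the unique multiplier $\lambda_0$ given by \eqref{eq: lambda}, so $F(h(t_0),\lambda_0,t_0) = 0$.

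The main linear-algebra step is to verify that the partial Jacobian of $F$ with respect to $(x,\lambda)$ at this base point, i.e.\ the bordered KKT matrix
\begin{equation*}
K(t_0) = \begin{pmatrix} \nabla_{xx}^2 L(h(t_0),\lambda_0,t_0) & \mathcal{J}_g(h(t_0),t_0)^\top \\ \mathcal{J}_g(h(t_0),t_0) & 0 \end{pmatrix},
\end{equation*}
is nonsingular. A short block-elimination argument shows that $K(t_0)$ is invertible whenever (i) $\mathcal{J}_g(h(t_0),t_0)$ has full row rank and (ii) $\nabla_{xx}^2 L(h(t_0),\lambda_0,t_0)$ is positive definite on $\ker\mathcal{J}_g(h(t_0),t_0)$. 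Condition (i) is Assumption \ref{assumption: regular of constraints} and (ii) is Assumption \ref{ass: non-singular}, applied at $t_0$.

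Invoking the IFT then yields an open interval $I_{t_0}\ni t_0$ and unique $C^1$ functions $(\tilde{x}(t),\tilde{\lambda}(t))$ on $I_{t_0}$ with $F(\tilde{x}(t),\tilde{\lambda}(t),t)=0$ and $(\tilde{x}(t_0),\tilde{\lambda}(t_0))=(h(t_0),\lambda_0)$. By continuity of $h$ and the local uniqueness of KKT pairs supplied by IFT, $h$ must coincide with $\tilde{x}$ on a neighborhood of $t_0$, so $h$ is differentiable at $t_0$; since $t_0$ was arbitrary, $h \in C^1([0,\infty))$. The same local uniqueness shows that for each $t$ the point $h(t)$ is separated from every other KKT point by a positive distance, so the trajectory is isolated. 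Finally, if two distinct local minimum trajectories $h_1$ and $h_2$ were to merge or bifurcate at some $t^*$, continuity would force $h_1(t^*)=h_2(t^*)$; applying IFT at this common point would then produce a unique $C^1$ local branch, forcing $h_1\equiv h_2$ in a neighborhood of $t^*$ and contradicting their distinctness.

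The delicate point, and the one I would be most careful about, is upgrading the pointwise identity $h(t_0)=\tilde{x}(t_0)$ to agreement on a full neighborhood: a priori $h$ is only continuous and not known to be a KKT branch. The resolution is to combine the strict local minimality of $h(t)$ on $\mathcal{M}(t)$ (a consequence of the positive definiteness in Assumption \ref{ass: non-singular}) with the IFT's local uniqueness of KKT pairs, and run a standard open/closed argument on $\{t \in I_{t_0} : h(t)=\tilde{x}(t)\}$ to conclude equality on $I_{t_0}$.
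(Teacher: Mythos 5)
Your proposal follows essentially the same route as the paper: both apply the implicit/inverse function theorem to the KKT system \eqref{eq: constrained stationary condition}, using Assumptions \ref{assumption: regular of constraints} and \ref{ass: non-singular} to get nonsingularity of the bordered KKT matrix, then combine local uniqueness of the resulting KKT branch with continuity of $h$ to identify $h$ with that branch. Your write-up supplies details the paper delegates to a citation (the block-elimination argument for invertibility of $K(t_0)$ and the care in upgrading $h(t_0)=\tilde x(t_0)$ to agreement on a neighborhood), but it is the same proof in substance; note only that since each $h(t)$ is by definition a local minimizer and hence itself a KKT point lying, by continuity, in the IFT's uniqueness neighborhood, the identification $h\equiv\tilde x$ follows directly without the open/closed argument.
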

\begin{proof}
Under Assumptions \ref{assumption:smoothness}-\ref{ass: non-singular}, one can apply the inverse function theorem to \eqref{eq: constrained stationary condition} (see \cite[Theorem~4.4, Example~4.7]{still2018lectures}) to conclude that for every $h(\bar t)$ and $\bar t$, there exist an open set $\mathcal{S}_{h(\bar t)}$ containing $h(\bar t)$ and an open set $\mathcal{S}_{\bar t}$ containing $\bar t$ such that there exist a unique differentiable function $x(t)$ in $\mathcal{S}_{h(\bar t)}$  for all $t\in \mathcal{S}_{\bar t}$ where $x(t)$ is the isolated local minimizer of the time-varying optimization problem \eqref{eq: constrained time-varying optimization}. Because of this uniqueness property and the continuity of the local minimum trajectory $h(t)$, $x(t)$ must coincide with $h(t)$ for all $t\in \mathcal{S}_{\bar t}$. Then, because the above property holds uniformly for every $t \in [0,\infty)$, $h(t)$ must be a differentiable isolated minimum trajectory.
\end{proof}
}
After freezing the time $t$ in \eqref{eq: constrained time-varying optimization} at a particular value, one may use local search methods, like Rosen's gradient projection method \cite{rosen1961gradient}, to minimize $f(x,t)$ over the feasible region $\mathcal{M}(t)$. If the initial point is feasible and close enough to a local solution and the step size is small enough, the algorithm will converge to the local minimum. This leads to the notion of region of attraction defined by resorting to the continuous-time model of Rosen's gradient projection method \cite{tanabe1974algorithm} (for which the step size is not important anymore).
\begin{definition}
The \textbf{region of attraction} of a local minimum point $h(t)$ of $f(\cdot,t)$ {in the feasible set $\mathcal{M}(t)$} at a given time $t$ is defined as:
\begin{align*}
&RA^{\mathcal{M}(t)}(h(t))=\big\{x_0\in\mathcal{M}(t)\ \big|\ \lim_{\tilde t \rightarrow \infty}\tilde x(\tilde t)=h(t) \quad \text{where} \quad\\
&\frac{d \tilde x(\tilde t)}{d \tilde t}= -\mathcal{P}(\tilde x(\tilde t),t)\nabla_x f(\tilde x(\tilde t),t) \quad \text{and} \quad \tilde x(0)=x_0 \big\}
\end{align*}
\end{definition}
In the unconstrained case, the notion of the {locally one-point strong convexity} can be defined as follows:

\begin{definition}  \label{def: strong convex unconstrained}
Consider arbitrary positive scalars $c$ and $r$. The function $f(x,t)$ is said to be \comment{\textbf{locally $(c,r)$-one-point strongly convex} around the local minimum trajectory $h(t)$ if }
\begin{equation} \label{eq: one-point_1}
       \nabla_x f(e+h(t),t)^\top e \geq c \mynorm{e}^2, \quad \forall e \in D,\quad  \comment{\forall t\in [0,\infty)}
\end{equation}
where $D=\{e\in \mathbb{R}^n: \mynorm{e}\leq r\}$. The region $D=\{e\in \mathbb{R}^n: \mynorm{e}\leq r\}$ is called the  \comment{{region of locally $(c,r)$-one-point strong convexity}} around $h(t)$.
\end{definition}

This definition resembles the (locally) strong convexity condition for the function $f(x,t)$, but it is only expressed around the point $h(t)$. This restriction to a single point constitutes the definition of one-point strong convexity and it does not imply that the function is convex. 
The following result paves the way for the generalization of the notion of the {locally one-point strong convexity} from the unconstrained case to the equality constrained case.

\begin{lemma} \label{lemma: there exist r st one-point convex}
\comment{Consider an arbitrary local minimum trajectory $h(t)$ satisfying Assumption \ref{ass: non-singular},} there exist positive constants $\hat{r}$ and $\hat{c}$ such that 
\begin{equation*}
    e(t)^\top  \nabla_x L(e(t)+h(t),\lambda(e(t)+h(t),t),t)\geq \hat{c} \mynorm{e(t)}^2
\end{equation*}
for all $e(t) \in \{e+h(t)\in \mathcal{M}(t): \mynorm{e}\leq \hat{r}\}$.
\end{lemma}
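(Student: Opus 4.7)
The plan is to reduce the required one-point strong convexity on the manifold to the second-order sufficient optimality condition on the tangent space (Assumption~\ref{ass: non-singular}) via a Taylor expansion argument, carefully exploiting the fact that feasible perturbations $e$ are tangent to $\mathcal{M}(t)$ up to second order.

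First, I would fix $t$ and write $\bar x = h(t)+e \in \mathcal{M}(t)$. By the KKT condition \eqref{eq: constrained stationary condition 1} applied at $h(t)$, we have $\nabla_x L(h(t),\lambda(h(t),t),t)=0$. Taylor-expanding the map $x \mapsto \nabla_x L(x,\lambda(x,t),t)$ at $h(t)$ and using the chain rule gives
\begin{equation*}
\nabla_x L(\bar x,\lambda(\bar x,t),t) = H(t)\, e + \mathcal{J}_g(h(t),t)^\top \bigl(\partial_x\lambda(h(t),t)\bigr) e + R(e,t),
\end{equation*}
where $H(t) \coloneqq \nabla_{xx}^2 L(h(t),\lambda(h(t),t),t)$ and $\|R(e,t)\|\leq C_1\|e\|^2$ for some constant $C_1$ that is uniform in $t$ by Assumption~\ref{assumption:smoothness} and Assumption~\ref{ass: non-singular} (which keeps $h(t)$ bounded in the region where derivatives are bounded). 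Taking the inner product with $e$,
\begin{equation*}
e^\top \nabla_x L(\bar x,\lambda(\bar x,t),t) = e^\top H(t) e + \bigl(\mathcal{J}_g(h(t),t)e\bigr)^\top \bigl(\partial_x\lambda(h(t),t)\bigr) e + O(\|e\|^3).
\end{equation*}

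Next I would exploit feasibility: since $g(\bar x,t)=g(h(t),t)=0$, a Taylor expansion of $g$ gives $\mathcal{J}_g(h(t),t)\,e = O(\|e\|^2)$ uniformly in $t$. Consequently the middle cross term above is $O(\|e\|^3)$ and can be absorbed into the remainder. Decomposing $e=e_T+e_N$ with $e_T\in T_{h(t)}^t$ and $e_N \in (T_{h(t)}^t)^\perp = \mathrm{range}(\mathcal{J}_g(h(t),t)^\top)$, Assumption~\ref{assumption: regular of constraints} provides a uniform lower bound on the smallest singular value of $\mathcal{J}_g(h(t),t)$, so $\|e_N\| = O(\|e\|^2)$ and hence $\|e_T\|^2 = \|e\|^2 - O(\|e\|^4)$. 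Applying Assumption~\ref{ass: non-singular}, which gives a uniform constant $c_0>0$ with $y^\top H(t) y \ge c_0\|y\|^2$ for $y\in T_{h(t)}^t$, and bounding cross terms with $e_N$ using $\|e_N\|=O(\|e\|^2)$ yields
\begin{equation*}
e^\top H(t)\, e \ \geq\ c_0 \|e_T\|^2 - C_2 \|e\|^3 \ \geq\ c_0 \|e\|^2 - C_3 \|e\|^3.
\end{equation*}

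Combining the two displays, there is a uniform constant $C_4$ such that
\begin{equation*}
e^\top \nabla_x L(\bar x,\lambda(\bar x,t),t) \ \geq\ c_0 \|e\|^2 - C_4 \|e\|^3 \ =\ \bigl(c_0 - C_4\|e\|\bigr)\|e\|^2.
\end{equation*}
Choosing $\hat r \coloneqq c_0/(2C_4)$ and $\hat c \coloneqq c_0/2$ then gives the claim for all feasible $e$ with $\|e\|\le\hat r$, uniformly in $t$.

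The main obstacle I anticipate is justifying that all of the constants $C_1,C_2,C_3,C_4$ (arising from bounds on $\nabla_{xx}^2 f$, $\nabla_{xx}^2 g$, third derivatives, $\partial_x\lambda$, and the smallest singular value of $\mathcal{J}_g$) are genuinely uniform in $t\in[0,\infty)$. The statement wants a single pair $(\hat r,\hat c)$, not time-dependent ones, so the proof must carefully package the $C^2$ smoothness of $f,g$ together with the uniform non-singularity in Assumption~\ref{ass: non-singular} and Assumption~\ref{assumption: regular of constraints} into time-uniform estimates in a neighborhood of the trajectory $\{h(t):t\ge 0\}$.
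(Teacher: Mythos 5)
Your proof is correct and follows the same high-level strategy as the paper: Taylor-expand $\nabla_x L(x,\lambda(x,t),t)$ around $h(t)$, invoke the uniform second-order sufficient condition of Assumption~\ref{ass: non-singular} to get positivity on the tangent space, and use feasibility of $e+h(t)\in\mathcal{M}(t)$ to push the non-tangent part of $e$ into higher-order terms. The one execution difference is that the paper short-circuits the chain-rule computation by citing the Luenberger identity $\frac{\partial}{\partial x}\nabla_x L(x,\lambda(x,t),t)\big|_{x=h(t)} = \nabla_{xx}^2 L\,\mathcal{P}(h(t),t)$ and only implicitly uses feasibility (via the remark that $g(x,t)$ is continuous) to control the contribution from $e$'s normal component, whereas you expand the chain rule directly, keep the $\mathcal{J}_g^\top(\partial_x\lambda)e$ term, and then explicitly kill it and the normal component $e_N$ with the quadratic feasibility estimate $\mathcal{J}_g(h(t),t)e=O(\|e\|^2)$ plus the uniform lower bound on $\sigma_{\min}(\mathcal{J}_g)$ from Assumption~\ref{assumption: regular of constraints}. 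Your version is more self-contained and actually supplies the detail the paper's proof leaves implicit (in particular, the paper's claim that $y^\top\nabla_{xx}^2 L\,\mathcal{P}\,y=0$ for $y\notin T_{h(t)}^t$ is literally only true for $y\perp T_{h(t)}^t$, and the missing mixed-direction argument is precisely your $e_T/e_N$ split). Your closing caveat about time-uniformity of the constants over $[0,\infty)$ is also legitimate; the paper does not fully justify it either, relying on the word ``uniformly'' in Assumption~\ref{ass: non-singular} and a continuity appeal to Lemma~\ref{lemma: local lip of ODE3}, which is stated only on compact time intervals.
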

\begin{proof}
Due to the second-order sufficient conditions for the equality constrained minimization problem, $\nabla_{xx}^2 L(h(t),\lambda(h(t),t),t)$ is positive definite on $T_{h(t)}^t$ for all \comment{$t\in [0,\infty)$}, meaning that for every nonzero vector $y\in T_{h(t)}^t$, there exists a positive constant $\bar{c}$ such that $y\nabla_{xx}^2 L(h(t),\lambda,t)y>\bar{c}\mynorm{y}^2$. Since $\mathcal{P}(h(t),t)$ is the orthogonal projection matrix onto the tangent plane $T_{h(t)}^t$, we have $y\nabla_{xx}^2 L(h(t),\lambda(h(t),t),t) \mathcal{P}(h(t),t) y>\bar{c}\mynorm{y}^2$ for all $y\in T_{h(t)}^t$ and $y\neq 0$, and 
$y\nabla_{xx}^2 L(h(t),\lambda(h(t),t),t)$ $\mathcal{P}(h(t),t) y=0$ for all $y\notin T_{h(t)}^t$.
Taking the first-order Taylor expansion of $\nabla_x L(x,\lambda(x,t),t)$ with respect to $x$ around $h(t)$ and using the following result from \cite[Corollary 1]{luenberger1972gradient}:
\begin{align*}
\evalat{\frac{\partial }{\partial x}\nabla_x L(x,\lambda(x,t),t)}{x=h(t)}=& {\nabla_{xx}^2L(h(t),\lambda(h(t),t),t)}\\
&\mathcal{P}(h(t),t),
\end{align*}
it yields that 
\begin{align*}
&e(t)^\top  \nabla_x L(e(t)+h(t),\lambda,t)=e(t)^\top  \nabla_x L(h(t),\lambda,t)\\
+&e(t)^\top  \nabla_{xx}^2 L(h(t),\lambda,t)\mathcal{P}(h(t),t) e(t)+o(e(t)^3)\\
=&e(t)^\top  \nabla_{xx}^2 L(h(t),\lambda,t)\mathcal{P}(h(t),t)e(t)+o(e(t)^3)
\end{align*}
From Lemma \ref{lemma: local lip of ODE3} in Appendix \ref{appendix: local lip of ODE3}, we know that $\nabla_{xx}^2 L(x,\lambda,t)\mathcal{P}(x,t)$ is continuous in $x$ and $t$. In addition, $g(x,t)$ is also continuous in $x$ and $t$. As a result, there exist positive constants $\hat{r}$ and $\hat{c}$ such that
\begin{equation*}
    e(t)^\top  \nabla_x L(e(t)+h(t),\lambda,t)\geq \hat{c} \mynorm{e(t)}^2
\end{equation*}
for all $e(t) \in \{e+h(t)\in \mathcal{M}(t): \mynorm{e}\leq \hat{r}\}$
\end{proof}

\begin{definition}  \label{def: strong convex on manifold}
Consider arbitrary positive scalars $c$ and $r$.
The Lagrange function $L(x,\lambda,t)$ with $\lambda$ given in \eqref{eq: lambda} is said to be \comment{\textbf{locally $(c,r)$-one-point strongly convex} with respect to $x$ around the local minimum trajectory $h(t)$ in the feasible set $\mathcal{M}(t)$} if:
\begin{equation} \label{eq: one-point_1 on manifold}
  e^\top  \nabla_x L(e+h(t),\lambda(e+h(t),t),t) \geq c \mynorm{e}^2 
\end{equation}
for all $e \in D^{\mathcal{M}(t)}$ and \comment{$t\in [0,\infty)$}, where $D^{\mathcal{M}(t)}=\{e+h(t)\in \mathcal{M}(t): \mynorm{e}\leq r\}$. The region $D^{\mathcal{M}(t)}=\{e+h(t)\in \mathcal{M}(t): \mynorm{e}\leq r\}$ is called the \comment{region of locally $(c,r)$-one-point strong convexity} of the Lagrange function $L(x,\lambda,t)$ around $h(t)$ in the feasible set $\mathcal{M}(t)$.
\end{definition}
\begin{remark}
The Lagrange function $L(x,\lambda,t)$ with $\lambda$ given in \eqref{eq: lambda} being \comment{locally $(c,r)$-one-point strongly convex with respect to $x$} around $h(t)$ is equivalent to the vector field $\mathcal{P}(x,t)\nabla_x f(x(t),t)$ being \comment{\textbf{locally $(c,r)$-one-point strongly monotone}} with respect to $x$ around $h(t)$.
\end{remark}



\subsection{Derivation of time-varying projected gradient flow system}
In practice, one can only hope to sequentially solve the time-varying optimization problem \eqref{eq: constrained time-varying optimization} at some discrete time instances $0 = \tau_0 < \tau_1 < \tau_2 <\tau_3< \ldots$ as follows:
\begin{equation} \label{eq: sequential constrained opt}
  \min_{x\in \mathbb{R}^n} \quad  f(x,\tau_i), \quad   \text{s.t.} \quad  g(x,\tau_i)=0,\quad i=1,2,\ldots 
\end{equation} 

In many real-world applications, it is neither practical nor realistic to have solutions that abruptly change over time. To meet this requirement, we impose a soft constraint to the objective function by penalizing the deviation of its solution from the one obtained in the previous time step. 
This leads to the following sequence of optimization problems with \textbf{proximal regularization} (except for the initial optimization problem):
\begin{subequations}
\label{eq: regularized constrained opt}
\begin{align} 
  \min_{x\in \mathbb{R}^n} \quad & f(x,\tau_0), \label{eq: regularized constrained opt t0} \\
  \nonumber \text{s.t.} \quad  &g(x,\tau_0)=0,  \\
 \min_{x\in \mathbb{R}^n} \quad & f(x,\tau_i) +\frac{\alpha}{2(\tau_{i}-\tau_{i-1})}\mynorm{x-x^\ast_{i-1}}^2, \label{eq: regularized constrained opt tk} \\
  \nonumber \text{s.t.} \quad & g(x,\tau_i)=0,\quad i=1,2,\ldots 
\end{align}
\end{subequations}
where $x^\ast_{i-1}$ denotes  an  arbitrary  local  minimum  of  the  modified  optimization  problem  \eqref{eq: regularized constrained opt} obtained  using  a local  search  method  at  time  iteration $i-1$. A local optimal solution sequence $x_0^\ast, x_1^\ast, x_2^\ast, \ldots$ is said to be a \textbf{discrete local trajectory} of the sequential {regularized} optimization \eqref{eq: regularized constrained opt}. \comment{The parameter $\alpha$ is called inertia because it acts as a resistance to changes $x$ at time step $\tau_i$ with respect to $x$ at the previous time step $\tau_{i-1}$.}
Note that $\alpha$ could be time-varying (and adaptively changing)  in the analysis of this paper, but we restrict our attention to a fixed regularization term to simplify the presentation. 

Under Assumption \ref{assumption: regular of constraints}, all solutions $x^\ast$ of \eqref{eq: regularized constrained opt tk} must satisfy the KKT conditions:
\begin{subequations} \label{eq: constrained KKT}
    \begin{align} 
    0&=\nabla_x f(x_i^\ast,\tau_i)+\alpha \frac{x_i^\ast-x_{i-1}^\ast}{\tau_{i}-\tau_{i-1}}+ \mathcal{J}_g(x_i,\tau_i)^\top \bar{\lambda}_i, \label{eq: constrained KKT 1}\\
    0&=g(x_i,\tau_i), \label{eq: constrained KKT 2}
\end{align}
\end{subequations}
where $\bar{\lambda}_i$'s are the Lagrange multipliers for the sequence of optimization problems with proximal regularization in \eqref{eq: regularized constrained opt}.  Similar to \cite{massicot2019line}, we can write the right-hand side of the constraint \eqref{eq: constrained KKT 2} as:
\begin{equation} 
    \frac{g(x_i,\tau_i)-g(x_i,\tau_{i-1})+g(x_i,\tau_{i-1})-g(x_{i-1},\tau_{i-1})}{\tau_i-\tau_{i-1}}
\end{equation}
Since the function $f(x,t)$ and $g(x,t)$ are nonconvex in general, the problem \eqref{eq: regularized constrained opt}  may not have a unique solution  $x_{i}^\ast$.  In order to cope with this issue, we study the continuous-time limit of \eqref{eq: constrained KKT} as the time step $\tau_{i+1}-\tau_i$ diminishes to zero. This yields the following time-varying ordinary differential equations:
\begin{subequations} \label{eq: cts time kkt}
    \begin{align} 
    0&=\nabla_x f(x(t),t)+\alpha \dot{x}(t)+ \mathcal{J}_g(x(t),t)^\top\bar{\lambda}(t), \label{eq: continuous constrained KKT 1}\\
    0&=\mathcal{J}_g(x(t),t)\dot{x}(t)+g^\prime(x(t),t), \label{eq: continuous constrained KKT 2}
\end{align}
\end{subequations}
where $g^\prime=\frac{\partial g(x,t)}{\partial t}$ denotes the partial derivative of $g$ with respect to $t$. Since $\mathcal{J}_g(x(t),t)\mathcal{J}_g(x(t),t)^\top$ is invertible, we have
\begin{align}
\nonumber0=&(\mathcal{J}_g(x(t),t)\mathcal{J}_g(x(t),t)^\top)^{-1}\mathcal{J}_g(x(t),t) \nabla_x f(x(t),t)\\
&-\alpha(\mathcal{J}_g(x(t),t)\mathcal{J}_g(x(t),t)^\top)^{-1}g^\prime(x(t),t) + \bar{\lambda}(t).
\end{align}
Therefore, $\bar{\lambda}(t)$ can be written as a function of $x$, $t$ and $\alpha$:
\begin{align}
   \nonumber \bar{\lambda}(t)=&-(\mathcal{J}_g(x(t),t)\mathcal{J}_g(x(t),t)^\top)^{-1}\mathcal{J}_g(x(t),t)\nabla_x f(x(t),t)\\
   \nonumber&+\alpha(\mathcal{J}_g(x(t),t)\mathcal{J}_g(x(t),t)^\top)^{-1}g^\prime(x(t),t)\\
   =& \lambda(x(t),t)+\alpha (\mathcal{J}_g(x,t)\mathcal{J}_g(x,t)^\top)^{-1}g^\prime (x,t)\label{eq: lambda bar}
\end{align}
We alternatively denote $\bar\lambda(t)$ as $\bar\lambda(x(t),t,\alpha)$. When $\alpha=0$, we have $\bar{\lambda}(x(t),t,\alpha)={\lambda}(x(t),t)$ and the differential equation \eqref{eq: cts time kkt} reduces to the algebraic equation \eqref{eq: constrained stationary condition}, which is indeed the first-order KKT condition for the unregularized time-varying optimization \eqref{eq: constrained time-varying optimization}. 
When $\alpha>0$, substituting $\bar{\lambda}(x(t),t,\alpha)$ into \eqref{eq: continuous constrained KKT 1} yields the following time-varying ODE:
\begin{equation}  \tag{P-ODE}
\dot{x}(t) = -\frac{1}{\alpha}\mathcal{P}(x(t),t) \nabla_x f(x(t),t) - \mathcal{Q}(x(t),t)g^\prime(x(t),t),
\end{equation}
where $\mathcal{Q}(x(t),t)=\mathcal{J}_g(x(t),t)^\top (\mathcal{J}_g(x(t),t)\mathcal{J}_g(x(t),t)^\top)^{-1}$.
In terms of the Lagrange functional, \eqref{eq: ODE3} can be written as 
\begin{equation} \label{eq: langange with lambda bar}
    \dot{x}=-\frac{1}{\alpha}\nabla_x L(x,\bar{\lambda},t)=-\frac{1}{\alpha}\nabla_x L(x,{\lambda},t)-\mathcal{Q}(x,t)g^\prime(x,t).
\end{equation}
\comment{Here, $\nabla_x L(x,\bar \lambda,t)$ means first taking the partial gradient with respect to the first argument and then using the formula \eqref{eq: lambda bar} for $\bar\lambda$.} It can be shown that if the initial point of \eqref{eq: ODE3} is in the feasible set $M(t_0)$, the solution of \eqref{eq: ODE3} will stay in the feasible set $M(t)$.
\begin{lemma} \label{lemma: invariance}
Suppose that the solution $x(t,t_0,x_0)$ of \eqref{eq: ODE3} is defined in $[t_0,\infty)$ with the initial point $x_0$. If $x_0 \in \mathcal{M}(t_0)$, then the solution $x(t,t_0,x_0)$ belongs to $\mathcal{M}(t)$ for all $t\geq t_0$.
\end{lemma}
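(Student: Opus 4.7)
The plan is to show that $y(t) := g(x(t),t)$ is identically zero along the trajectory by verifying $\dot{y}(t) \equiv 0$ together with the initial condition $y(t_0) = 0$. By Assumption~\ref{assumption: regular of constraints}, the projector $\mathcal{P}(x,t)$ and the matrix $\mathcal{Q}(x,t)$ are well-defined along the solution, and the chain rule gives
\begin{equation*}
\dot{y}(t) = \mathcal{J}_g(x(t),t)\,\dot{x}(t) + g'(x(t),t).
\end{equation*}

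Next, I would substitute \eqref{eq: ODE3} into this expression and exploit two algebraic identities that follow directly from the definitions of $\mathcal{P}$ and $\mathcal{Q}$. Namely, because $\mathcal{P}(x,t) = I_n - \mathcal{J}_g^\top(\mathcal{J}_g\mathcal{J}_g^\top)^{-1}\mathcal{J}_g$ is the orthogonal projection onto the kernel of $\mathcal{J}_g(x,t)$, one has $\mathcal{J}_g(x,t)\mathcal{P}(x,t) = 0$; and because $\mathcal{Q}(x,t) = \mathcal{J}_g(x,t)^\top(\mathcal{J}_g(x,t)\mathcal{J}_g(x,t)^\top)^{-1}$ is a right-inverse of $\mathcal{J}_g(x,t)$, one has $\mathcal{J}_g(x,t)\mathcal{Q}(x,t) = I_m$. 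Plugging these into the expression for $\dot{y}(t)$ yields
\begin{equation*}
\dot{y}(t) = -\tfrac{1}{\alpha}\,0\cdot\nabla_x f(x,t) - I_m\, g'(x,t) + g'(x,t) = 0.
\end{equation*}

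Since $\dot{y}(t) = 0$ for all $t\in[t_0,\infty)$, integration gives $y(t) = y(t_0) = g(x_0,t_0) = 0$, so $x(t)\in \mathcal{M}(t)$ for every $t\geq t_0$, which is the desired conclusion. There is essentially no hard step here: the argument is a direct invariance check, and the only subtlety is that it tacitly relies on Assumption~\ref{assumption: regular of constraints} along the solution (so that $\mathcal{P}$ and $\mathcal{Q}$ are defined) and on the existence of the solution on $[t_0,\infty)$, both of which are assumed in the statement. The lemma can in fact be viewed as saying that the algebraic identity \eqref{eq: continuous constrained KKT 2}, which was used in the derivation of \eqref{eq: ODE3}, is exactly the constraint $\frac{d}{dt}g(x(t),t)=0$, so feasibility propagates automatically.
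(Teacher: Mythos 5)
Your proof is correct and takes essentially the same approach as the paper: compute $\frac{d}{dt}g(x(t),t)$ along the flow via the chain rule, observe that it vanishes identically, and integrate from the feasible initial condition. The only difference is cosmetic — you spell out the algebraic identities $\mathcal{J}_g\mathcal{P}=0$ and $\mathcal{J}_g\mathcal{Q}=I_m$ that the paper leaves implicit.
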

\begin{proof}
On examining the evolution of $g(x(t),t)$ along the flow of the system \eqref{eq: ODE3}, we obtain
\begin{equation*}
   \comment{ \dot{g}(x(t),t)=\mathcal{J}_g(x(t),t)\dot{x}(t)+g^\prime(x(t),t)=0 }
\end{equation*}
Hence, $g(x(t_0),t_0)=g(x(t,t_0,x_0),t)$ for all $t\geq t_0$.
\end{proof}
Therefore, as long as the initial point of \eqref{eq: ODE3} is in the feasible set $M(t_0)$, the above lemma guarantees that we can \comment{analyze} the stability of \eqref{eq: ODE3} using the standard Lyapunov's theorem without worrying about the feasibility of the solution. 
\comment{When $\alpha>0$, we will show that for any initial point $x_0$, \eqref{eq: ODE3} has a unique solution defined for all $t\in I_t \subseteq [0,\infty)$ if there exists a local minimum trajectory $h(t)$ such that the solutions of \eqref{eq: ODE3} lie in a compact set around $h(t)$} \footnote{\comment{In Theorems \ref{thm: jump} and \ref{thm: jump average}, the compactness assumption is included in the definition of the dominant trajectory. In Theorem \ref{thm: Sufficient condition for tracking constrained}, checking the compactness assumption can be carried out via the Lyapunov's method without solving the differential equation due to the one-point strong convexity condition around $h(t)$.}}.
\comment{\begin{theorem}[Existence and uniqueness] \label{lemma: exist and unique}
Under Assumptions \ref{assumption:smoothness}-\ref{ass: non-singular} and given any initial point $x_0 \in \mathcal{M}(t_0)$, suppose that there exists a local minimum trajectory  $h(t)$ with the property that $x(t)-h(t)$ lies entirely in ${D}$ for all $t\in I_t \subseteq [0,\infty)$ where ${D}$ is a compact subset of  $\mathbb{R}^n$ containing $x_0-h(t_0)$. Then, \eqref{eq: ODE3} has a unique solution starting from $x_0$ that is defined for all $t\geq 0$.
\end{theorem}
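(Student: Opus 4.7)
The plan is to invoke the standard Picard--Lindelöf machinery for ODEs and then upgrade a locally defined solution to a globally defined one by using the trajectory-compactness hypothesis to rule out finite-time blow-up. First, I would verify that the vector field
\[
F(x,t) \;=\; -\frac{1}{\alpha}\mathcal{P}(x,t)\nabla_x f(x,t) \;-\; \mathcal{Q}(x,t)\,g^\prime(x,t)
\]
is locally Lipschitz in $x$ and continuous in $t$ on an open neighborhood $U$ of the set $\{(x,t) : x \in h(t)+D,\ t\ge 0\}\cap\{(x,t):g(x,t)=0\}$. By Assumption~\ref{assumption:smoothness}, the maps $\nabla_x f$, $g^\prime$, and $\mathcal{J}_g$ are all smooth enough, and by Assumption~\ref{assumption: regular of constraints} combined with continuity of $\mathcal{J}_g$, the Gram matrix $\mathcal{J}_g(x,t)\mathcal{J}_g(x,t)^\top$ has a positive minimum eigenvalue on the compact set $h([0,T])+D$ intersected with the feasible manifold, for any finite $T$. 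By continuity of the smallest eigenvalue, this lower bound persists on a small open $U$ surrounding that compact set. On $U$ the inverse $(\mathcal{J}_g\mathcal{J}_g^\top)^{-1}$ is thus $C^1$, so both $\mathcal{P}$ and $\mathcal{Q}$ are $C^1$, and hence $F$ is $C^1$ in $x$ and continuous in $t$. This is exactly the content of the appendix lemma \textbf{Lemma}~\ref{lemma: local lip of ODE3}, which I would cite directly.

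With local Lipschitzness in hand, Picard--Lindelöf yields a unique maximal solution $x:[0,T_{\max})\to\mathbb{R}^n$ of \eqref{eq: ODE3} starting from $x_0\in\mathcal{M}(t_0)$. Next I would invoke Lemma~\ref{lemma: invariance} (invariance of the feasible manifold along the flow) to conclude $x(t)\in\mathcal{M}(t)$ for every $t\in[0,T_{\max})$. Combining this with the standing hypothesis that $x(t)-h(t)\in D$ for all $t\in I_t$, and the fact that $h$ is continuous (in fact differentiable, by Lemma~\ref{lemma: differentiable and isolated}), I obtain that for every finite $T<T_{\max}$ the trajectory $x([0,T])$ is contained in the compact set $h([0,T])+D$, and in particular stays bounded.

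Finally, to obtain global existence I would argue by contradiction. Suppose $T_{\max}<\infty$. The standard continuation principle (e.g.\ \cite{khalil2002nonlinear,hale1980ODE}) says that either $\|x(t)\|\to\infty$ as $t\uparrow T_{\max}$, or $x(t)$ leaves every compact subset of the domain $U$ on which $F$ is defined and locally Lipschitz. The previous paragraph rules out both: the trajectory stays in the compact tube $h([0,T_{\max}])+D$, which is contained in $U$ together with a positive-distance neighborhood by the eigenvalue argument above. This contradiction forces $T_{\max}=\infty$, giving the claimed global existence, while uniqueness is inherited from the local statement.

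I expect the main obstacle to be the first step, namely carefully propagating the full row-rank condition from the feasible manifold $\mathcal{M}(t)$ to an open $t$-varying tube around $h(t)+D$ so that $(\mathcal{J}_g\mathcal{J}_g^\top)^{-1}$, and hence $\mathcal{P}$ and $\mathcal{Q}$, remain smooth and uniformly bounded on every finite time interval. Once this regularity of the right-hand side is secured (via the appendix lemma), the extension argument is routine, and the compactness hypothesis does exactly the work required to exclude finite-time blow-up.
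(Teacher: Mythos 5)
Your proposal is correct and follows essentially the same route as the paper: establish local Lipschitz continuity of the right-hand side (via the appendix lemma), use the feasibility-invariance lemma to keep the solution on $\mathcal{M}(t)$, and invoke the compactness hypothesis to exclude finite-time blow-up. The paper compresses the Picard--Lindel\"of-plus-continuation step into a single citation of \cite[Theorem~3.3]{khalil2002nonlinear} after the change of variables $e(t)=x(t)-h(t)$, whereas you unpack the continuation argument explicitly and work in the original $x$-coordinates with the tube $h([0,T])+D$; these are cosmetic differences. You are also right to flag the need to extend the full-row-rank (and hence Lipschitz) property to an open neighborhood of the feasible manifold before Picard--Lindel\"of can be applied — the paper's appendix lemma states its conclusion only on $\{(x,t):x\in\mathcal{M}(t)\}$, and your eigenvalue-continuity remark makes the implicit extension explicit.
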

\begin{proof}
Since $h(t)$ is differentiable by Lemma \ref{lemma: differentiable and isolated},  we can use the change of variables $e(t)=x(t)-h(t)$ to rewrite \eqref{eq: ODE3} as:
\begin{align}   \label{eq: e exist and uniqueness}
\nonumber  \dot{e}(t)=&-\frac{1}{\alpha}\mathcal{P}(e(t)+h(t),t)\nabla_x f(e(t)+h(t),t)-\\
&\mathcal{Q}(e(t)+h(t),t)g^\prime(e(t)+h(t),t) - \dot{h}(t)  
\end{align}
In light of the conditions in Theorem \ref{lemma: exist and unique}, the solution of \eqref{eq: e exist and uniqueness} stays in a compact set. Then, by Lemma \ref{lemma: invariance} and \cite[Theorem 3.3]{khalil2002nonlinear}, the equation \eqref{eq: e exist and uniqueness} has a unique solution. Thus, \eqref{eq: ODE3} must also have a unique solution.
\end{proof} }
In online optimization, it is sometimes desirable to predict the solution at a future time (namely, $\tau_i$) only based on the information at the current time (namely, $\tau_{i-1}$). This can be achieved by implementing the forward Euler method to obtain a numerical approximation to the solution of \eqref{eq: ODE3}:
\begin{align} \label{eq: forward euler}
  \nonumber\bar{x}^\ast_{i} =& \bar{x}^\ast_{i-1}-({\tau_i-\tau_{i-1}})\Big(\frac{1}{\alpha} \mathcal{P}(\bar{x}^\ast_{i-1},\tau_{i-1})\nabla_x f(\bar{x}^\ast_{i-1},\tau_{i-1})\\
  &+\mathcal{Q}(\bar{x}^\ast_{i-1},\tau_{i-1})g^\prime(\bar{x}^\ast_{i-1},\tau_{i-1})\Big)
\end{align}
(note that $\bar{x}^\ast_0,\bar{x}^\ast_1,\bar{x}^\ast_2,...$ show the approximate solutions). 
The following theorem explains the reason behind studying the continuous-time problem \eqref{eq: ODE3} in the remainder of this paper.
\begin{theorem}[Convergence] \label{lemma: ODE convergence}
Under Assumptions \ref{assumption:smoothness}-\ref{ass: non-singular} and given  a local minimum $x_0^\ast$ of \eqref{eq: regularized constrained opt t0}, as the time difference $\Delta_\tau=\tau_{i+1}-\tau_i$ approaches zero, any sequence of discrete local trajectories ($x_k^{\Delta}$) converges to the \eqref{eq: ODE3} in the sense that for all fixed $T >0$:
\begin{equation}
   \lim_{ \Delta_\tau\rightarrow 0} \max_{0\leq k \leq \frac{T}{\Delta \tau}} \mynorm{x_k^{\Delta}-x(\tau_k,\tau_0,x_0^\ast)}=0
\end{equation}
and any sequence of ($\bar{x}_k^{\Delta}$) updated by \eqref{eq: forward euler} converges to the \eqref{eq: ODE3} in the sense that for all fixed $T >0$:
\begin{equation}
   \lim_{ \Delta_\tau\rightarrow 0} \max_{0\leq k \leq \frac{T}{\Delta \tau}} \mynorm{\bar{x}_k^{\Delta}-x(\tau_k,\tau_0,x_0^\ast)}=0
\end{equation}
\end{theorem}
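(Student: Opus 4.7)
The plan is to establish both convergence statements by a consistency-plus-stability argument, exploiting the local Lipschitz continuity of the right-hand side of \eqref{eq: ODE3}, which follows from Assumption \ref{assumption:smoothness}, Assumption \ref{assumption: regular of constraints}, and the fact that $\mathcal{J}_g \mathcal{J}_g^\top$ is bounded away from singularity along the continuous trajectory (as guaranteed by Theorem \ref{lemma: exist and unique} together with continuity and compactness of the relevant set). Call this common Lipschitz constant $L$ on a tube $\mathcal{T}$ of radius $\delta$ around the continuous trajectory $x(\tau_k,\tau_0,x_0^\ast)$ for $\tau_k\in[0,T]$.

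For the forward Euler claim, this is essentially the classical convergence theorem for explicit one-step methods. The local truncation error per step is $O(\Delta_\tau^2)$ because $x(\tau_{k+1})-x(\tau_k)=\Delta_\tau\dot x(\tau_k)+O(\Delta_\tau^2)$ and the RHS of \eqref{eq: forward euler} evaluates $\dot x$ at $(\bar x_k^{\Delta},\tau_k)$. Letting $e_k^{\text{Euler}}=\bar x_k^{\Delta}-x(\tau_k,\tau_0,x_0^\ast)$, the recursion
\begin{equation*}
\mynorm{e_{k+1}^{\text{Euler}}}\leq (1+L\Delta_\tau)\mynorm{e_k^{\text{Euler}}}+C\Delta_\tau^2
\end{equation*}
and the discrete Gronwall inequality yield $\max_k\mynorm{e_k^{\text{Euler}}}\leq \frac{C\Delta_\tau}{L}(e^{LT}-1)\to 0$. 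A bootstrap argument is needed to confirm that the iterates stay inside $\mathcal{T}$ for small enough $\Delta_\tau$, but this is immediate once the error bound is established.

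For the discrete local trajectory generated by \eqref{eq: regularized constrained opt}, I would rewrite the KKT system \eqref{eq: constrained KKT} as an implicit one-step scheme for \eqref{eq: ODE3}. Multiplying \eqref{eq: constrained KKT 1} by $\mathcal{J}_g(x_i^\ast,\tau_i)(\mathcal{J}_g\mathcal{J}_g^\top)^{-1}$, solving for $\bar\lambda_i$, and substituting back yields
\begin{equation*}
\frac{x_i^\ast-x_{i-1}^\ast}{\Delta_\tau}=-\frac{1}{\alpha}\mathcal{P}(x_i^\ast,\tau_i)\nabla_x f(x_i^\ast,\tau_i)+r_i,
\end{equation*}
while the feasibility constraint \eqref{eq: constrained KKT 2}, rewritten as $\tfrac{g(x_i^\ast,\tau_i)-g(x_{i-1}^\ast,\tau_{i-1})}{\Delta_\tau}=0$ and Taylor-expanded, enforces $\mathcal{J}_g(x_i^\ast,\tau_i)\tfrac{x_i^\ast-x_{i-1}^\ast}{\Delta_\tau}+g'(x_i^\ast,\tau_i)=O(\Delta_\tau)$. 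Combining these gives an $O(\Delta_\tau)$-perturbed implicit Euler discretization of \eqref{eq: ODE3}. The classical convergence theory for implicit one-step methods (again via discrete Gronwall) then delivers the uniform bound on $\max_k\mynorm{x_k^{\Delta}-x(\tau_k,\tau_0,x_0^\ast)}$.

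The main obstacle is ensuring that the nonconvex local minima $x_i^\ast$ are \emph{well-defined and unique} in a neighborhood of $x(\tau_k,\tau_0,x_0^\ast)$ as $\Delta_\tau\to 0$, so that the ``discrete local trajectory'' is actually a coherent sequence. This follows because for sufficiently small $\Delta_\tau$ the proximal term $\tfrac{\alpha}{2\Delta_\tau}\mynorm{x-x_{i-1}^\ast}^2$ dominates the Hessian of $f(\cdot,\tau_i)+\lambda^\top g(\cdot,\tau_i)$ on a neighborhood of the continuous trajectory point, so the regularized problem is locally strongly convex on its feasible set near $x(\tau_i)$, giving a unique local minimum there by the implicit function theorem applied to the KKT system exactly as in the proof of Lemma \ref{lemma: differentiable and isolated}. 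An induction argument then keeps the iterates inside this neighborhood throughout $[0,T]$, closing the proof.
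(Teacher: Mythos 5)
Your proposal is correct and, in mathematical substance, follows the same route the paper takes: the paper's own proof is essentially a two-line appeal to outside results (Theorem 2 of \cite{fattahi2019absence} for the discrete local trajectory claim, and the classical forward-Euler convergence theory of \cite{iserles2009first} for the forward-Euler claim), and what you have written is a self-contained reconstruction of exactly what those references provide: a consistency-plus-stability argument with discrete Gronwall, plus the observation that eliminating $\bar\lambda_i$ from the regularized KKT system \eqref{eq: constrained KKT} and Taylor-expanding the feasibility residual turns that system into an $O(\Delta_\tau)$-perturbed one-step scheme for \eqref{eq: ODE3}. The one place your write-up adds content the paper glosses over is the well-definedness of the discrete local trajectory near the continuous solution for small $\Delta_\tau$, which you handle via the proximal term $\tfrac{\alpha}{2\Delta_\tau}\mynorm{x-x_{i-1}^\ast}^2$ dominating the Lagrangian Hessian and the implicit function theorem in the spirit of Lemma \ref{lemma: differentiable and isolated}; this point is buried inside the cited [fattahi2019absence] result rather than stated in the paper itself. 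So: same approach, just unpacked rather than cited.
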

\begin{proof}
The first part follows from Theorem 2 in \cite{fattahi2019absence}. For the second part, a direct application of the classical results on convergence of the forward Euler method \cite{iserles2009first} immediately shows that the solution of \eqref{eq: ODE3} starting at a local minimum of \eqref{eq: regularized constrained opt t0} is the continuous limit of the discrete local trajectory of  the sequential {regularized} optimization \eqref{eq: regularized constrained opt}.
\end{proof}

Theorem \ref{lemma: ODE convergence} guarantees that the solution of \eqref{eq: ODE3} is a reasonable approximation in the sense that it is the continuous-time limit of both the solution of the sequential regularized optimization problem \eqref{eq: regularized constrained opt} and the solution of the online updating scheme \eqref{eq: forward euler}. For this reason, we only study the continuous-time problem~\eqref{eq: ODE3} in the remainder of this paper.

\subsection{Jumping, tracking and escaping}
In this paper, the objective is to study the case where there are at least \comment{two local minimum trajectories} of the online time-varying optimization problem. 
\comment{Consider two local minimum trajectories $h_1(t)$ and $h_2(t)$. }
We provide the definitions of jumping, tracking and escaping below.
\begin{definition}
It is said that the solution of \eqref{eq: ODE3} \textbf{(v,u)-jumps} from $h_1(t)$ to $h_2(t)$ over the time interval $[t_1,t_2]$ if there exist $u>0$ and $v>0$ such that
\begin{subequations}
\begin{align}
& \mathcal{B}_v(h_1(t_1))\cap\mathcal{M}(t_1)  \subseteq RA^{\mathcal{M}(t_1)}(h_1(t_1)) \\
 &\mathcal{B}_u(h_2(t_2))\cap\mathcal{M}(t_2) \subseteq RA^{\mathcal{M}(t_2)}(h_2(t_2))\\
\nonumber& \forall x_1\in \mathcal{B}_v(h_1(t_1))\cap\mathcal{M}(t_1)\\
&\Longrightarrow x(t_2,t_1,x_1)\in\mathcal{B}_u(h_2(t_2))\cap\mathcal{M}(t_2)  
\end{align}
\end{subequations}
\end{definition}

\begin{definition}
Given $x_0\in \mathcal{M}(t_0)$, it is said that $x(t,t_0,x_0)$ \textbf{u-tracks} $h_2(t)$ if there exist a finite time $T>0$ and a constant  $u>0$ such that  
\begin{subequations}
\begin{align}
&x(t,t_0,x_0)\in \mathcal{B}_u(h_2(t))\cap\mathcal{M}(t), & \forall t\geq T\\
& \mathcal{B}_u(h_2(t))\cap\mathcal{M}(t) \subseteq RA^{\mathcal{M}(t)}(h_2(t)), &\forall t\geq T
\end{align}
\end{subequations}
\end{definition}
In this paper, the objective is to study the scenario where a solution $x(t,t_0,x_0)$ tracking a poor solution $h_1(t)$ at the beginning ends up tracking a better solution $h_2(t)$ after some time. This needs the notion of ``escaping" which is a combination of jumping and tracking. 
\begin{definition}
It is said that the solution of \eqref{eq: ODE2} \textbf{(v,u)-escapes} from $h_1(t)$ to $h_2(t)$ if there exist $T>0$, $u>0$ and $v>0$ such that
\begin{subequations}
\begin{align}
& \mathcal{B}_v(h_1(t_0))\cap\mathcal{M}(t_0) \subseteq RA^{\mathcal{M}(t_0) }(h_1(t_0)) \\
&\mathcal{B}_u(h_2(t))\cap\mathcal{M}(t)\subseteq RA^{\mathcal{M}(t) }(h_2(t)), \ \comment{\forall t\geq T}\\
\nonumber & \forall x_0\in \mathcal{B}_v(h_1(t_0))\cap\mathcal{M}(t_0)\Longrightarrow\\&  x(t,t_0,x_0) \in \mathcal{B}_u(h_2(t))\cap\mathcal{M}(t), \ \comment{ \forall t \geq T }
\end{align}
\end{subequations}
\end{definition}
\begin{figure}[ht]
\centering
  \includegraphics[width=1 \linewidth]{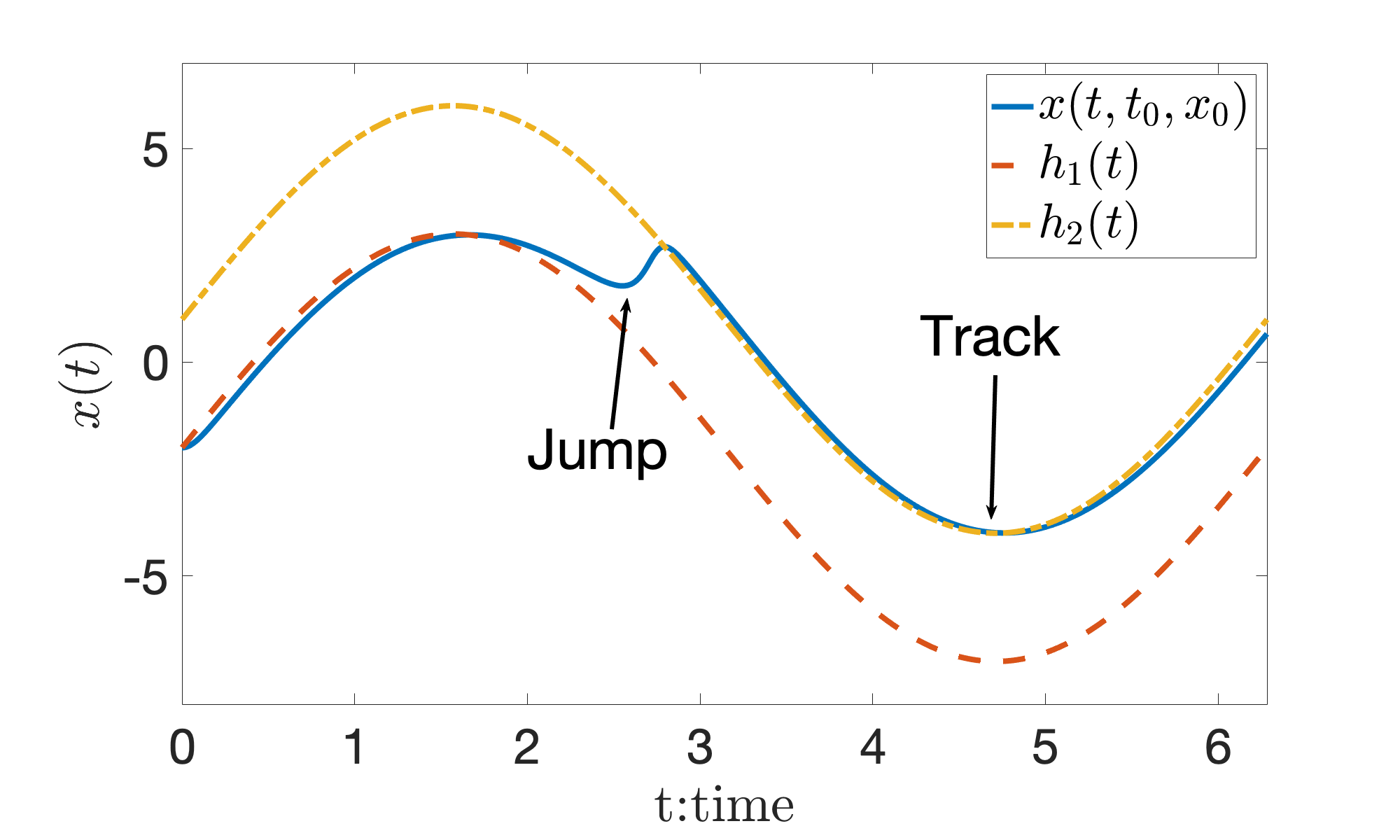}
  \caption{Illustration of jumping and tracking.}
  \label{fig: jump_track}
\end{figure}

Figure \ref{fig: jump_track} illustrates the definitions of jumping and tracking for Example \ref{ex:bifurcation} with $\alpha=0.3$ and $b=5$.
The objective of this paper is to study when the solution of \eqref{eq: ODE3} started at a poor local minimum at the initial time jumps to and tracks a better (or global) minimum of the problem after some time. In other words, it is desirable to investigate the escaping property from $h_1(t)$ and $h_2(t)$.

\section{Change of variables}\label{sec: change of variable}
Given two isolated local minimum trajectories $h_1(t)$, $h_2(t)$. One may use the change of variables  $x(t,t_0,x_0)$ $=e(t,t_0,e_0)+h_2(t)$ to transform \eqref{eq: ODE3} into the form 
\begin{subequations}\label{eq: ode3 change variable}
\begin{align}  
\nonumber  \dot{e}(t)=&-\frac{1}{\alpha}\mathcal{P}(e(t)+h_2(t),t)\nabla_x f(e(t)+h_2(t),t)-\\
   &\mathcal{Q}(e(t)+h_2(t),t)g^\prime(e(t)+h_2(t),t) - \dot{h}_2(t)  \label{eq: ode3 change variable gradient}\\
\nonumber   =&-\frac{1}{\alpha}\nabla_x \Big( L\big(e(t)+h_2(t),\bar{\lambda}(e(t)+h_2(t),t, \alpha),t\big)\\
  &+ \alpha\dot{h}_2(t)\comment{^\top} e(t) \Big) \label{eq: ode3 change variable landscape}
\end{align}
\end{subequations}
We use $e(t,t_0,e_0)$ to denote the solution of this differential equation starting at time $t=t_0$ with the initial point $e_0=x_0-h_2(t_0)$ \comment{and use $-\frac{1}{\alpha}U(e(t),t,\alpha)$ to denote the right-hand side of \eqref{eq: ode3 change variable}}.  Note that $h_1(t)$ and $h_2(t)$ are local solutions of \eqref{eq: constrained time-varying optimization} and as long as \eqref{eq: constrained time-varying optimization} is time-varying, these functions cannot satisfy \eqref{eq: ODE3} in general.  We denote $\mathcal{M}^h(t)\coloneqq\{e\in \mathbb{R}^n: g(e+h(t),t)=0\}$.
\subsection{Unconstrained optimization landscape after a change of variables} \label{sec: alternative view}
In this subsection, we study the unconstrained case to enable a better visualization of the optimization landscape. In the unconstrained case, \eqref{eq: ode3 change variable} is reduced to 
\begin{equation}   \label{eq: ode2 change variable}
   \dot{e}(t)=-\frac{1}{\alpha}\nabla_x f(e(t)+h_2(t),t) -\dot{h}_2(t).
\end{equation}

\subsubsection{Inertia encouraging the exploration} 
The first term $\nabla_x f(e+h_2(t),t)$ in \eqref{eq: ode2 change variable} can be understood as a time-varying gradient term that encourages the solution of \eqref{eq: ode2 change variable} to track $h_2(t)$, while the second term $\dot{h}_2(t)$ represents the inertia from this trajectory. In particular, if  $\dot{h}_2(t)$ points toward outside of the region of attraction of $h_2(t)$ during some time interval, the term $\dot{h}_2(t)$ acts as an \textbf{exploration} term that encourages the solution of \eqref{eq: ODE2} to leave the region of attraction of $h_2(t)$.  The parameter $\alpha$ balances the roles of the gradient and the inertia.

In the extreme case where $\alpha$ goes to infinity, $e(t)$ converges to $-h_2(t)$ and $x(t)$ approaches a constant trajectory determined by the initial point $x_0$; when $\alpha$ is sufficiently small, the time-varying gradient term dominates the inertia term and the solution of \eqref{eq: ODE2} would track $h_2(t)$ closely. With an appropriate proximal regularization $\alpha$ that keeps the balance between the time-varying gradient term and the inertia term, the solution of \eqref{eq: ODE2} could temporarily track a local minimum trajectory with the potential of exploring other local minimum trajectories.
\subsubsection{Inertia creating a one-point strongly convex landscape}
The differential equation \eqref{eq: ode2 change variable} can be written as
\begin{equation}   \label{eq: ode2 change variable h2}
   \dot{e}(t)=-\frac{1}{\alpha}  \nabla_e\Big( f(e(t)+h_2(t),t) + \alpha\dot{h}_2(t)^\top e(t)\Big)
\end{equation}
This can be regarded as a time-varying gradient flow system of the original objective function $f(e+h_2(t),t)$ plus a time-varying perturbation $\alpha\dot{h}_2(t)^\top e$. During some time interval $[t_1,t_2]$, the time-varying perturbation $\alpha\dot{h}_2(t)^\top e$ may enable the time-varying objective function $f(e+h_2(t),t)+\alpha\dot{h}_2(t)^\top e$ over a neighborhood of $h_1(t)$ to become {one-point strongly convexified} with respect to $h_2(t)$. Under such circumstances, the time-varying perturbation $\alpha\dot{h}_2(t)^\top e$ prompts the solution of \eqref{eq: ode2 change variable h2} starting in a neighborhood of $h_1(t)$ to move towards a neighborhood of $h_2(t)$. Before analyzing this phenomenon, we illustrate the concept in an example.

Consider again Example \ref{ex:bifurcation} and recall that $\bar{f}(x)$ has 2 local minima at $x=-2$ and $x=1$. By taking $b=5$, $h_1(t)=-2+5\sin(t)$ and $h_2(t)=1+5\sin(t)$, the differential equation \eqref{eq: ode2 change variable h2}  can be expressed as $\dot{e}(t)=-\frac{1}{\alpha}  \nabla_e \Big( \bar{f}(1+e(t)) + 5\alpha\cos(t)e(t) \Big)$.
The landscape of the new time-varying function  $\bar{f}(1+e) + 5\alpha\cos(t)e$ with the variable $e$ is shown  for two cases $\alpha=0.3$ and $\alpha=0.1$ in Figure  \ref{fig: new landscape}. The red curves are the solutions of \eqref{eq: ode2 change variable h2} starting from $e=-3$.
One can observe that when $\alpha=0.3$, the new landscape becomes one-point strongly convex around $h_2(t)$ over the whole region for some time interval, which provides \eqref{eq: ode2 change variable h2} with the opportunity of escaping from the region around $h_1(t)$ to the region around $h_2(t)$. However, when $\alpha=0.1$, there are always two locally one-point strongly convex regions around $h_1(t)$ and $h_2(t)$  and, therefore, \eqref{eq: ode2 change variable h2} fails to escape the region around $h_1(t)$.
\begin{figure*}[ht]
\centering
\subfloat[$\bar{f}(1+e) + 1.5\cos(t)e$]{\label{fig: change_variable 1.5}\includegraphics[width=0.45\linewidth]{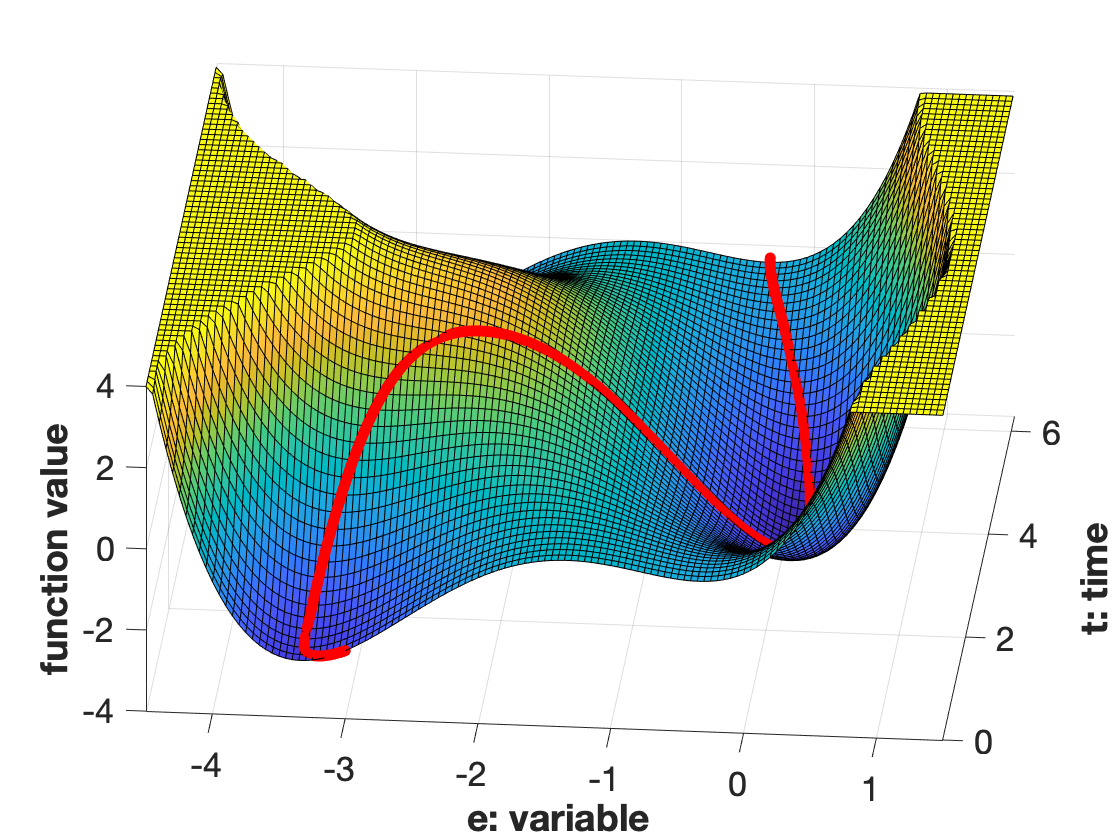}}
\subfloat[$\bar{f}(1+e) + 0.5\cos(t)e$]{\label{fig: change_variable 0.5}\includegraphics[width=0.45 \linewidth]{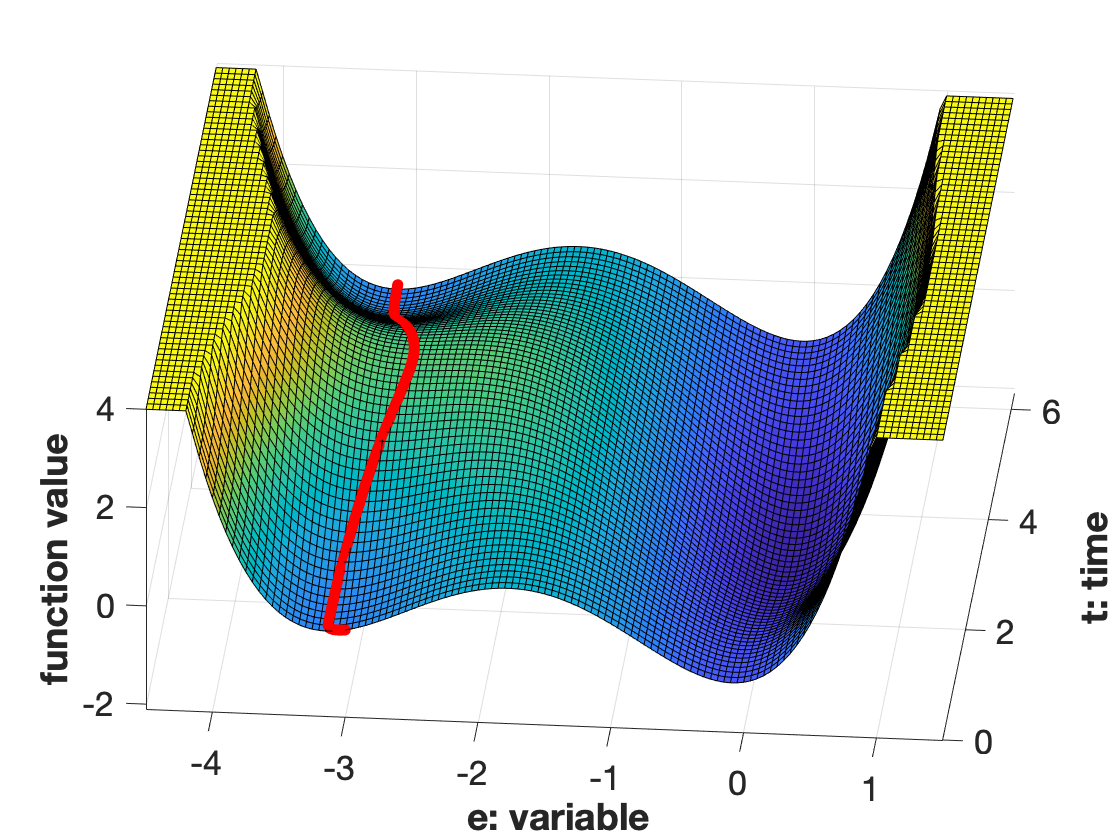}}
\caption{Illustration of time-varying landscape after change of variables for Example \ref{ex:bifurcation}.}
\label{fig: new landscape}
\end{figure*}
 
To further inspect the case $\alpha=0.3$, observe  in Figure \ref{fig: onepointconvex 5pi/6} that the landscape of the objective function $\bar{f}(1+e)+1.5\cos(0.85\pi)e$  shows that the region around the spurious local minimum trajectory $h_1(t)$ is one-point strongly convexified with respect to $h_2(t)$ at time $t=0.85\pi$. This is consistent with the fact that the solution of $\dot{e}=-\frac{1}{0.3}\nabla_x \bar{f}(1+e)-5\cos(t)$ starting from $e=-3$ jumps to the neighborhood of $0$ around time $t=0.85\pi$, as demonstrated in Figure \ref{fig: errortrack}.
\begin{figure*}[ht]
\centering
\subfloat[$\bar{f}(1+e)+1.5\cos(0.85\pi)e$]{\label{fig: onepointconvex 5pi/6}\includegraphics[width=0.32 \linewidth]{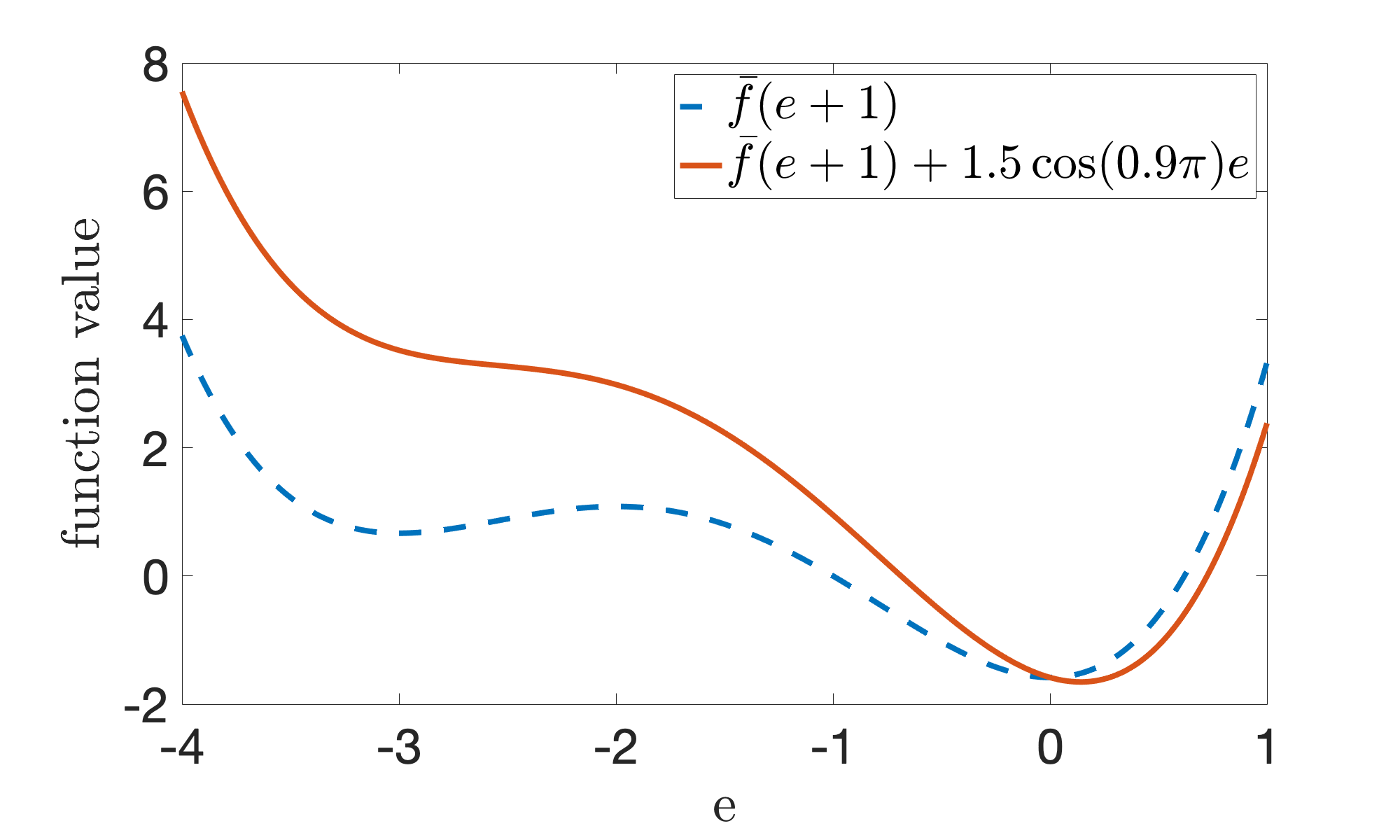}}
\subfloat[$\bar{f}(1+e)+1.5\cos(0)e$]{\label{fig: onepointconvex 0}\includegraphics[width=0.32 \linewidth]{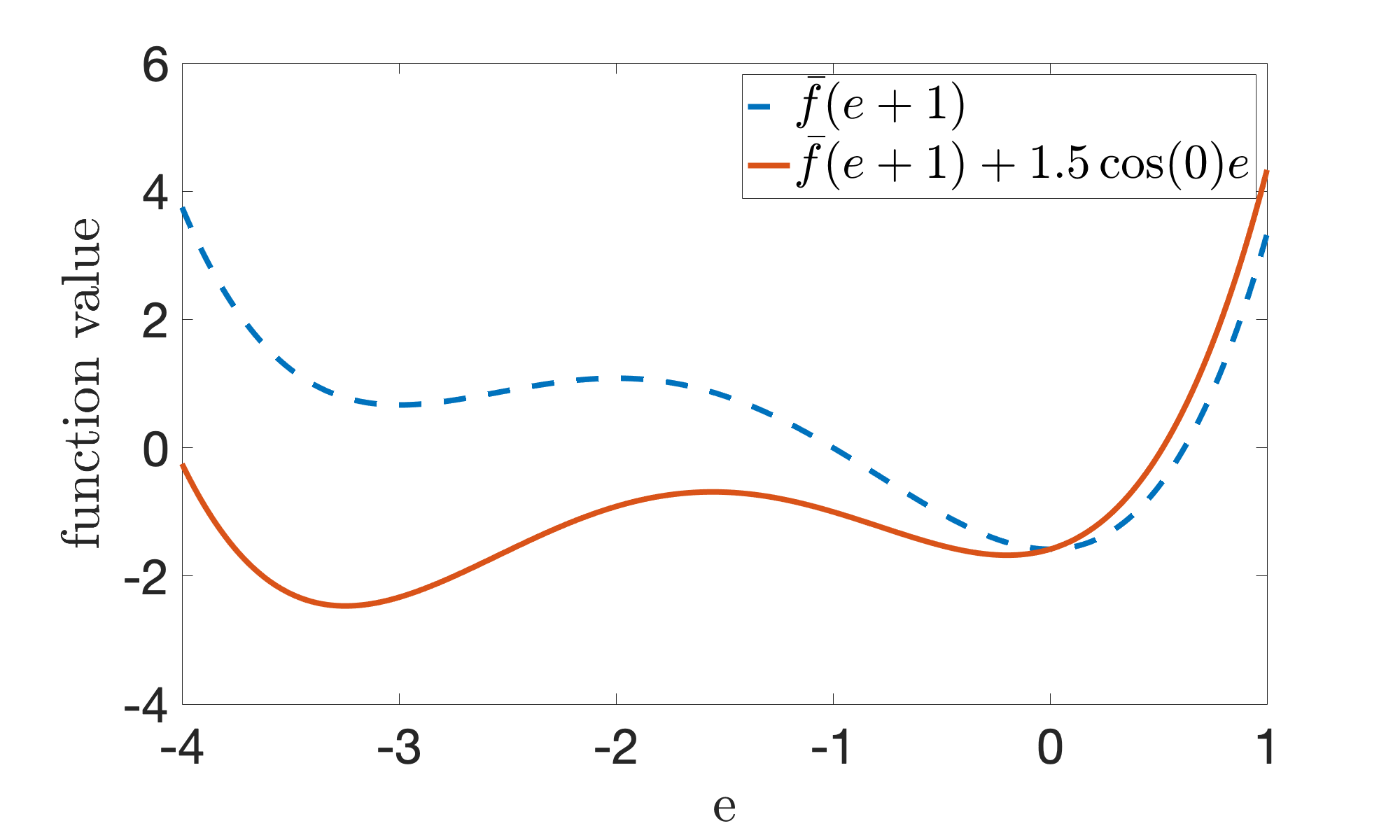}}
\subfloat[solution of $\dot{e}=-\frac{1}{0.3}\nabla_x \bar{f}(1+e)-5\cos(t)$ starting from $e_0=-3$]{\label{fig: errortrack}\includegraphics[width=0.32 \linewidth]{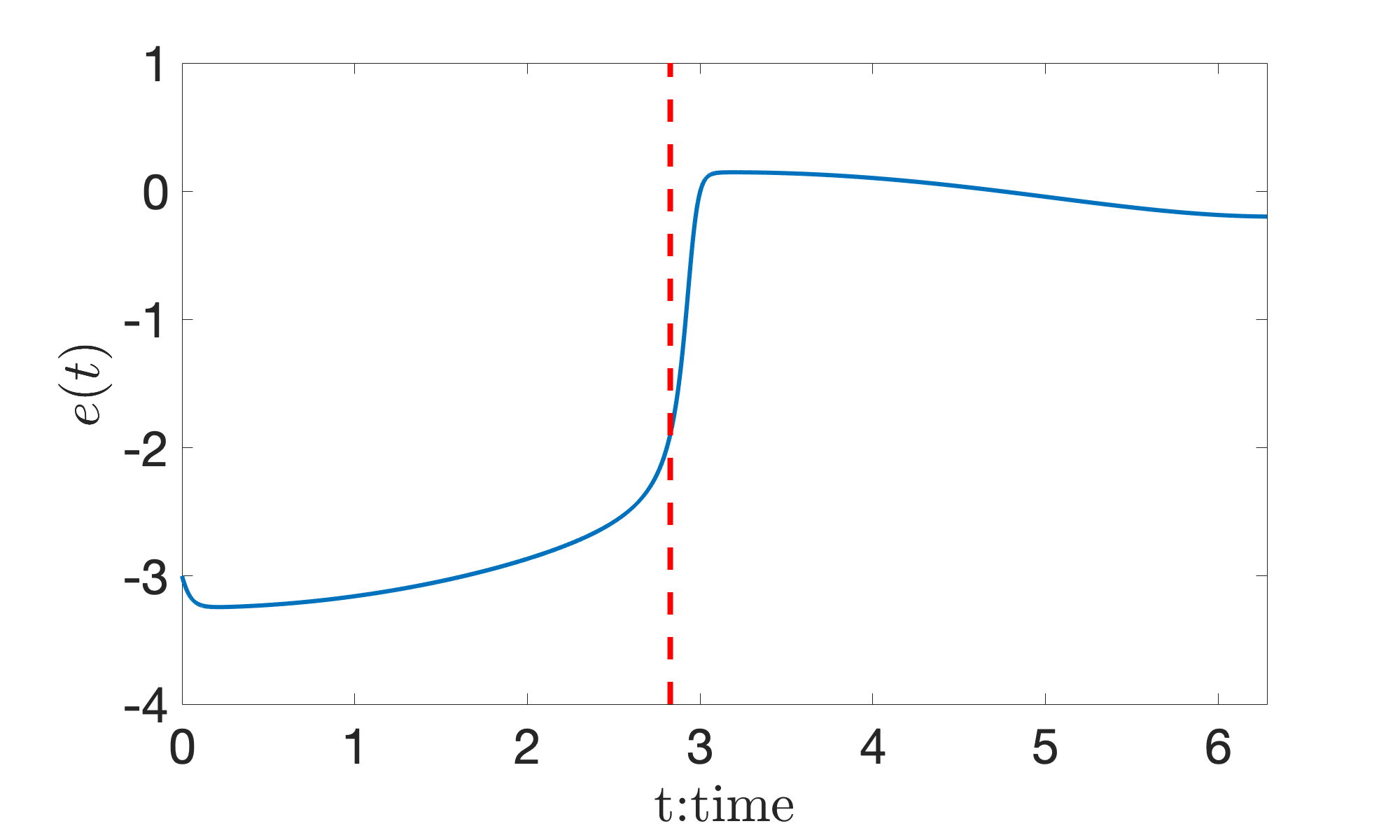}}
\caption{Illustration of one-point strong convexification for Example \ref{ex:bifurcation}.}
\label{fig: one point convex}
\end{figure*}
\begin{table*}[ht]
\centering
\caption{A unified view for unconstrained and equality-constrained problems}
\label{table:unified}
\begin{tabular}{|c|c|c|}
\hline
                                         & Unconstrained problem& Equality-constrained problem \\ \hline
 \makecell{First-order optimality\\condition(FOC) } &  $0=\nabla_x f(x,t)$  &   $0=\nabla_x L(x,\lambda,t)$  \\ \hline
 \makecell{ODE (continuous\\time limit of FOC \\for regularized problem) }
  & $\dot{x}=-\frac{1}{\alpha}\nabla_x f(x,t)$  & $\dot{x}=-\frac{1}{\alpha}\nabla_x L(x,\bar{\lambda},t)$ \\ \hline
 \makecell{Change of variables: \\ $x=h+e$}                           &    $\dot{e}=-\frac{1}{\alpha}\nabla_e f(e+h,t)-\dot{h}$            &    $\dot{e}=-\frac{1}{\alpha}\nabla_e L(e+h,\bar{\lambda},t)-\dot{h}$                   \\ \hline
 \makecell{Key assumption:\\one-point strong convexity   }           & $e^\top \nabla_e f(e+h,t)\geq c\mynorm{e}^2$              &    $e^\top \nabla_e L(e+h,\lambda,t)\geq c\mynorm{e}^2$                     \\ \hline
 \makecell{Reshaping of the landscape:\\one-point strong convexification} &  $e^\top\Big( \nabla_e f(e+h,t)+\alpha\dot{h}\Big)\geq w\mynorm{e}^2$              &   $e^\top \Big(  \nabla_e L(e+h,\bar{\lambda},t)+\alpha \dot{h} \Big)\geq w\mynorm{e}^2$                  \\ \hline
\end{tabular}
\end{table*}
Furthermore, if the time interval $[t_1,t_2]$ is large enough to allow transitioning from a neighborhood of $h_1(t)$ to a neighborhood of $h_2(t)$, then  the solution of  \eqref{eq: ode2 change variable h2} would move to the neighborhood of $h_2(t)$. In contrast, the region around $1+b\sin(t)$ is never one-point strongly convexified with respect to $-2+b\sin(t)$, as shown in Figure \ref{fig: onepointconvex 0}.

From the right-hand side of \eqref{eq: ode2 change variable h2}, it can be inferred that if the gradient of $f(\cdot,t)$ is relatively small around some local minimum trajectory, then its landscape is easier to be re-shaped by the time-varying linear perturbation $\alpha \dot{h}_2(t)^\top e$. The local minimum trajectory in a neighborhood with small gradients usually corresponds to a shallow minimum trajectory in which the trajectory has a relatively flat landscape and a relatively small region of attraction. Thus, the one-point strong convexication introduced by the time-varying perturbation could help {escape the shallow minimum trajectories}.

\subsection{\comment{Dominant trajectory}}
\comment{
In this subsection, we will formalize the intuitions discussed in Section \ref{sec: alternative view}. We first define the notion of the shallow local minimum trajectory.
\begin{definition} 
Consider a positive number $\alpha$ and assume that $\dot h_1(t)$ is $L$-Lipschitz continuous. It is said that the local minimum trajectory $h_1(t)$ is \textbf{$\alpha$-shallow} during the time period $[t_0,t_0+\delta]$ if  $\epsilon>E(\alpha)+L\delta$ and $r\leq \frac{1}{2}\delta(\epsilon-E(\alpha)-L\delta)$, where $\epsilon=\sup_{t\in [t_0,t_0+\delta]} \mynorm{\dot{h}_1(t)}$, $r=\sup_{t\in[t_0,t_0+\delta]}\sup_{x(t)\in RA^{\mathcal{M}(t)}(h_1(t))} \mynorm{x(t)-h_1(t)}$, $E(\alpha)=\sup_{t\in[t_0,t_0+\delta]}\sup_{x(t)\in RA^{\mathcal{M}(t)}(h_1(t))} \mynorm{\frac{1}{\alpha}\nabla_x L(x,\bar{\lambda},t)}$ and $\frac{1}{\alpha}\nabla_x L(x,\bar{\lambda},t)$ is defined in \eqref{eq: langange with lambda bar}.
\end{definition}}
\comment{In other words, a local minimum trajectory is shallow if it has a large time variation but a small region of attraction.
We next show that whenever a local minimum trajectory $h_1(t)$ is shallow during some time interval, the solution of \eqref{eq: ODE3} starting anywhere in the region of attraction of $h_1(t)$ will leave its region of attraction at some time.
\begin{lemma} \label{lemma: shallow local min escape}
If the local minimum trajectory $h_1(t)$ is $\alpha$-{shallow} during $[t_0,t_0+\delta]$, then for any $x(t_0) \in RA^{\mathcal{M}(t_0)}(h_1(t_0))$, then there exists a time $t\in [t_0,t_0+\delta]$ such that $x(t)\notin RA^{\mathcal{M}(t)}(h_1(t))$.
\end{lemma}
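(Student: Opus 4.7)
The plan is to argue by contradiction. Suppose the solution $x(t)=x(t,t_0,x(t_0))$ of \eqref{eq: ODE3} stays inside $RA^{\mathcal{M}(t)}(h_1(t))$ for every $t\in[t_0,t_0+\delta]$. Then the definitions of $r$ and $E(\alpha)$, together with the Lagrange form \eqref{eq: langange with lambda bar} of \eqref{eq: ODE3}, give
\[\mynorm{x(t)-h_1(t)}\leq r,\qquad \mynorm{\dot x(t)}=\tfrac{1}{\alpha}\mynorm{\nabla_x L(x(t),\bar\lambda,t)}\leq E(\alpha)\]
throughout the interval. The goal is to show that the inertia hypothesis on $h_1$ is strong enough to force the displacement $e(t):=x(t)-h_1(t)$ to change by more than the admissible diameter $2r$ of the region of attraction, which will contradict the standing assumption.

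To extract a fixed direction along which the inertia is uniformly large, pick $t^\ast\in[t_0,t_0+\delta]$ where $\mynorm{\dot h_1(t)}$ attains its supremum $\epsilon$ (such a $t^\ast$ exists by continuity of $\dot h_1$ and compactness of the interval), and set $u=\dot h_1(t^\ast)/\epsilon$, so that $\mynorm{u}=1$ and $u^\top \dot h_1(t^\ast)=\epsilon$. The $L$-Lipschitz continuity of $\dot h_1$ then gives $u^\top \dot h_1(t)\geq \epsilon-L|t-t^\ast|$ for every $t\in[t_0,t_0+\delta]$. Integrating and using the elementary bound $\int_{t_0}^{t_0+\delta}|t-t^\ast|\,dt\leq \delta^2/2$ yields $u^\top\bigl(h_1(t_0+\delta)-h_1(t_0)\bigr)\geq \epsilon\delta-\tfrac12 L\delta^2$, while the ODE bound on $\mynorm{\dot x}$ gives $\bigl|u^\top(x(t_0+\delta)-x(t_0))\bigr|\leq E(\alpha)\delta$. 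Subtracting these two estimates and invoking the shallow hypothesis $2r\leq \delta(\epsilon-E(\alpha)-L\delta)$ yields
\[\mynorm{e(t_0+\delta)-e(t_0)}\geq -u^\top\bigl(e(t_0+\delta)-e(t_0)\bigr)\geq \delta\bigl(\epsilon-E(\alpha)-\tfrac12 L\delta\bigr)>2r.\]
On the other hand, the standing assumption $x(t_0),x(t_0+\delta)\in RA^{\mathcal{M}(\cdot)}(h_1(\cdot))$ forces $\mynorm{e(t_0+\delta)-e(t_0)}\leq \mynorm{e(t_0+\delta)}+\mynorm{e(t_0)}\leq 2r$, a contradiction; hence some $t\in[t_0,t_0+\delta]$ with $x(t)\notin RA^{\mathcal{M}(t)}(h_1(t))$ must exist.

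The main technical obstacle is producing a \emph{strict} lower bound on $\mynorm{e(t_0+\delta)-e(t_0)}$, since the shallow hypothesis only offers the weak inequality $2r\leq \delta(\epsilon-E(\alpha)-L\delta)$; sharpening $\int|t-t^\ast|\,dt$ from the naive bound $\delta^2$ down to $\delta^2/2$ is precisely what upgrades this to the strict inequality $>2r$ needed to contradict the triangle inequality. A secondary point is that the integrals above implicitly require $x(t)$ to be defined on the entire interval $[t_0,t_0+\delta]$; this is guaranteed by Theorem~\ref{lemma: exist and unique}, since under the contradiction hypothesis the solution is confined to the compact set $\{z\in\mathcal{M}(t):\mynorm{z-h_1(t)}\leq r\}$ around $h_1(t)$.
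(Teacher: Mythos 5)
Your argument is correct and mirrors the paper's proof: both proceed by contradiction, fix a unit vector aligned with the inertia of $h_1$, integrate the $L$-Lipschitz lower bound on its inner product with $\dot h_1$, bound $\mynorm{\dot x}$ by $E(\alpha)$, and contradict the $2r$ bound on the displacement of $e(t)=x(t)-h_1(t)$. Your choice of $t^\ast$ where $\mynorm{\dot h_1}$ attains its supremum actually cleans up a slip in the paper's version (which anchors the unit vector at $t_0$ even though $\mynorm{\dot h_1(t_0)}$ need not equal $\epsilon$), and the $\delta^2/2$ integral estimate is a genuine sharpening---though the strict inequality $>2r$ you obtain still degenerates to equality when $L=0$, a corner case that the paper's version of the argument shares.
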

\begin{proof}
Let $b(t_0)$ be the unit vector $-\frac{ \dot{h}_1(t_0) }{ \mynorm{\dot{h}_1(t_0)} }$. One can write
\begin{align*}
    -\dot{h}_1(t)^\top b(t_0)\geq -\dot{h}_1(t_0)^\top b(t_0)-L|t-t_0|\geq \epsilon-L\delta:=\epsilon
    ^\prime
\end{align*}
For any $t\in[t_0,t_0+\delta]$ and $e(t)\in RA^{\mathcal{M}(t)}(h_1(t))$, we have
\begin{align*}
    (\dot{x}(t)-\dot{h}_1(t))^\top b(t_0)=&-\frac{1}{\alpha}\nabla_x L(x,\bar{\lambda},t)^\top b(t_0)-\dot{h}_1(t)^\top b(t_0)\\
    \geq & \epsilon^\prime-\mynorm{\frac{1}{\alpha}\nabla_x L(x,\bar{\lambda},t)} \geq \epsilon^\prime-E
\end{align*}
Hence,
\begin{align*}
    r\geq& \mynorm{x(t_0+\delta )-h_1(t_0+\delta )}\\
    \geq&  (x(t_0+\delta )-h_1(t_0+\delta ))^\top b(t_0)\\
    \geq&   (x(t_0)-h_1(t_0))^\top b(t_0) +\int_{t_0}^{t_0+\delta} (\epsilon^\prime-E) dt\\
    \geq& -r +(\epsilon^\prime-E) \delta
\end{align*}
The above contradiction completes the proof. 
\end{proof}}
\comment{
On the one hand, Lemma \ref{lemma: shallow local min escape} shows that any shallow local minimum trajectory is unstable in the sense that the time-variation in the minimum trajectory will force the solution of \eqref{eq: ODE3} to leave its region of attraction. If the shallow local minimum trajectory happens to be a non-global local solution, then the solution of \eqref{eq: ODE3}, acting as a tracking algorithm, will help avoid the bad local solutions for free. On the other hand, Lemma \ref{lemma: shallow local min escape} does not specify where the solution of \eqref{eq: ODE3} will end up after leaving the region of attraction of a shallow local minimum trajectory. Simulations (such as those provided  in Sections \ref{sec: alternative view} and \ref{sec:numercial example}) suggest that, with some appropriate $\alpha$, the solution of \eqref{eq: ODE3} may move towards a nearby local minimum trajectory that has an enlarged region of one-point strong convexity.
This leads to the following definition of the region of the domination and the {dominant} local minimum trajectory.
}
\comment{
\begin{definition} \label{def: region of domination}
Given two local minimum trajectories $h_1(t)$ and $h_2(t)$, suppose that the time-varying Lagrange function $L(x,\lambda,t)$ with $\lambda$ given in \eqref{eq: lambda} is locally $(c_2,r_2)$-one-point strongly convex with respect to $x$ around $h_2(t)$ in the region $\mathcal{M}^{h_2}(t)\cap \mathcal{B}_{r_2}(0)$. A set $D_{v,\rho,r_2}$ is said to be \textbf{the region of domination} for $h_2(t)$ with respect to $h_1(t)$ if it satisfies the following properties: 
\begin{itemize}
\item $D_{v,\rho,r_2}$ is a compact subset such that
	\begin{align} \label{eq: invariance-like set}
	e_1\in D_{v,\rho,r_2}\Rightarrow  e(t,t_1,e_1) \in D_{v,\rho,r_2},  \forall t\in [t_1,t_2]
	\end{align}
	where $e(t,t_1,e_1)$ is the solution of \eqref{eq: ode3 change variable} staring from the feasible initial point $e_1 \in {\mathcal{M}^{h_2}(t_1)}$ at the initial time $t_1$.
\item $D_{v,\rho,r_2} \supseteq	 D_v^\prime\cup\mathcal{B}_{\rho}(0)$ where
\begin{align}
\nonumber D_v^\prime=&\{e_1\in\mathbb{R}^n: e_1+h_2(t_1)\in \mathcal{M}(t_1) \cap \mathcal{B}_v(h_1(t_1)) \\
\nonumber\subseteq & RA^{\mathcal{M}(t_1)}(h_1(t_1))\},\\
\rho\geq& \sup_{t\in[t_1,t_2]}\sup_{\substack{\bar{e}(t):\mynorm{\bar{e}(t)}<r_2,\\ 0=U(\bar{e}(t),t,\alpha)}} \mynorm{\bar{e}(t)} \label{eq: e bar}.
 \end{align}
\end{itemize}
\end{definition}
The condition \eqref{eq: invariance-like set} is a set invariance property, which requires that the solution of \eqref{eq: ode3 change variable} starting from an initial point in $D_{v,\rho,r_2}$ stays in $D_{v,\rho,r_2}$ during the time period $[t_1,t_2]$.
For the visualization of $D_{v,\rho,r_2}$, $\mathcal{B}_{\rho}$ and $D_v^\prime$ in Definition \ref{def: region of domination}, we consider again Example \ref{ex:bifurcation}. In Fig \ref{fig: region of domination}, the red curve corresponds to the landscape of the function $\bar{f}(1+e)+1.5\cos(0.85\pi)e$, $e=0$ corresponds to $h_2(t)$ and $e=-3$ corresponds to $h_1(t)$. $\mathcal{B}_{\rho}$ is a region around $h_2(t)$ containing all zeros of $0=U(\cdot,t,\alpha)$ during a time period around $0.85\pi$ and $D_v^\prime$ is a neighborhood around $h_1(t)$. In this example, the region of domination for $h_2(t)$ with respect to $h_1(t)$ is $D_{v,\rho,r_2}=[-4,1]$ which contains $\mathcal{B}_{\rho}$ and $D_v^\prime$ if $h_1(t)$ if it also satisfies \eqref{eq: invariance-like set}.
\begin{figure}[ht]
\centering
  \includegraphics[width=1 \linewidth]{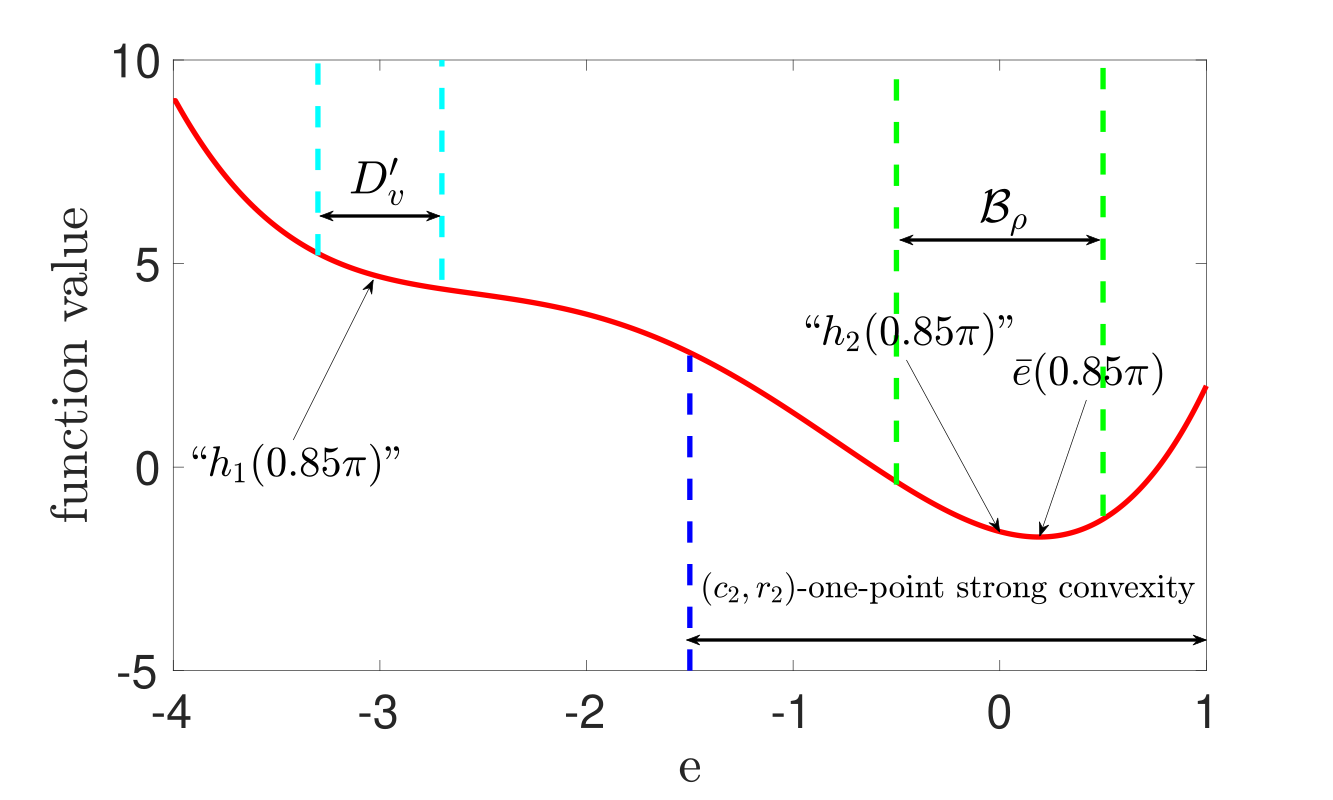}
  \caption{Illustration of Definition \ref{def: region of domination}: the region of domination.}
  \label{fig: region of domination}
\end{figure}
}
\comment{\begin{definition} \label{def: dominant trajectory}
 It is said that $h_2(t)$ is a $(\alpha,w)$-\textbf{dominant trajectory} with respect to $h_1(t)$ during the time period $[t_1,t_2]$ over the region $D_{v,\rho,r_2}$ if the time variation of $h_2(t)$ makes the time-varying function $U(e(t),t,\alpha)$ become one-point strongly monotone over $D_{v,\rho,r_2}$, i.e.,
	\begin{align} \label{eq: uniform one-point convex [t_1,t_2]}
	\nonumber	&U(e(t),t,\alpha)^\top \Big(e(t)-\bar{e}(t)\Big) \geq w\mynorm{e(t)-\bar{e}(t)}^2, \\& \forall e(t) \in D_{v,\rho,r_2}\cap{\mathcal{M}(t)},  t\in[t_1,t_2],
	\end{align}
	where $w>0$ is a constant and $\bar{e}(t)$ is defined in \eqref{eq: e bar}.
\end{definition}}
\comment{Note that $h_2(t)$ being a dominant trajectory with respect to $h_1(t)$ is equivalent to the statement that the inertia of $h_2(t)$ creates a strongly convex landscape over $D_{v,\rho,r_2}$, as discussed in Section \ref{sec: alternative view}. }

\subsection{The role of temporal variations of the constraints} \label{sec: constrained alternative view}
From the perspective of the landscape of the Lagrange functional, \eqref{eq: ode3 change variable landscape} can be regarded as a time-varying gradient flow system of the Lagrange functional $L\big(e(t)+h_2(t),\bar{\lambda}(e(t)+h_2(t),t,\alpha),t\big)$ plus a linear time-varying perturbation $\alpha\dot{h}_2^g(t)\comment{^\top}e(t)$.  Besides the linear time-varying perturbation $\alpha\dot{h}_2^g(t)\comment{^\top}e(t)$ induced by the inertia of the minimum trajectory similar to the unconstrained case, the constraints' temporal variation $g^\prime (\cdot,t)$ plays the role of shifting the Lagrange multiplier from $\lambda$ in \eqref{eq: lambda} to $\bar{\lambda}$ in \eqref{eq: lambda bar}, which results in a nonlinear time-varying perturbation of the landscape of the Lagrange functional. 

From the perspective of the perturbed gradient, the constraints' temporal variation $g^\prime (\cdot,t)$ perturbs the projected gradient $\mathcal{P}(\cdot,t)\nabla_x f(\cdot,t)$ in an orthogonal direction $\mathcal{Q}(\cdot,t)g^\prime(\cdot,t)$ to drive the trajectory of \eqref{eq: ode3 change variable gradient} towards satisfying the time-varying constraints.
\begin{lemma} \label{lemma: perpendicular}
At any given time $t$, the vector $\mathcal{P}(x,t)\nabla_x f(x,t)$ is orthogonal to the vector $\mathcal{Q}(x,t)g^\prime(x,t)$.
\end{lemma}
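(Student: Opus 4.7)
The plan is to exploit the fact that $\mathcal{P}(x,t)$ is the orthogonal projection onto the tangent plane $T_x^t = \ker(\mathcal{J}_g(x,t))$, while $\mathcal{Q}(x,t)g'(x,t)$ lies in $\mathrm{range}(\mathcal{J}_g(x,t)^\top)$. Since $\ker(\mathcal{J}_g)$ and $\mathrm{range}(\mathcal{J}_g^\top)$ are orthogonal complements of each other in $\mathbb{R}^n$, the two vectors must be orthogonal. So the whole claim is essentially a one-line linear-algebra fact; the task is just to verify it with the concrete formulas defining $\mathcal{P}$ and $\mathcal{Q}$.

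Concretely, I would compute the inner product
\[
\bigl(\mathcal{P}(x,t)\nabla_x f(x,t)\bigr)^\top \bigl(\mathcal{Q}(x,t) g'(x,t)\bigr)
= \nabla_x f(x,t)^\top \mathcal{P}(x,t)^\top \mathcal{Q}(x,t)\, g'(x,t),
\]
use the fact that $\mathcal{P}(x,t)$ is symmetric (which is clear from its defining formula, noting that $\mathcal{J}_g\mathcal{J}_g^\top$ is symmetric and hence so is its inverse), and then show $\mathcal{P}(x,t)\mathcal{Q}(x,t)=0$. Writing $J := \mathcal{J}_g(x,t)$ for brevity, this reduces to
\[
\mathcal{P}\mathcal{Q} = \bigl(I_n - J^\top(JJ^\top)^{-1}J\bigr)\, J^\top(JJ^\top)^{-1}
= J^\top(JJ^\top)^{-1} - J^\top(JJ^\top)^{-1}(JJ^\top)(JJ^\top)^{-1} = 0,
\]
which uses only Assumption \ref{assumption: regular of constraints} to guarantee invertibility of $JJ^\top$. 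Substituting back gives the desired orthogonality, and no further machinery is required.

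There is no genuine obstacle here; the only things to be careful about are: (i) noting that $\mathcal{P}$ is symmetric so that $\mathcal{P}^\top\mathcal{Q}$ collapses to $\mathcal{P}\mathcal{Q}$, and (ii) keeping the bracketed expression $(JJ^\top)^{-1}(JJ^\top)(JJ^\top)^{-1}$ from being simplified in the wrong order. Given how mechanical the calculation is, I would keep the proof to two short displayed equations, and optionally add a one-sentence geometric remark explaining that $\mathcal{P}$ projects onto $T_x^t$ while $\mathcal{Q}g'$ lies in its orthogonal complement, to make clear why the result is expected.
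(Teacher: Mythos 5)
Your proof is correct and takes essentially the same approach as the paper: both argue that $\mathcal{P}(x,t)\nabla_x f(x,t)$ lies in the tangent plane while $\mathcal{Q}(x,t)g'(x,t)$ lies in its orthogonal complement, with the algebraic core being $\mathcal{P}\mathcal{Q}=0$. The paper states this identity without displaying the cancellation, whereas you write out $\bigl(I_n - J^\top(JJ^\top)^{-1}J\bigr)J^\top(JJ^\top)^{-1}=0$ explicitly and also flag the symmetry of $\mathcal{P}$; these are useful clarifications but not a different route.
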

\begin{proof}
Recall that $\mathcal{P}(x,t)$ is the orthogonal projection matrix on the tangent plane of  $g(x(t),t)$ at the point $x(t)$ after the freezing time $t$. Thus, we have $\mathcal{P}(x,t)\nabla_x f(x,t) \in \comment{T_{x}^t}$. For the vector $\mathcal{Q}(x,t)g^\prime(x,t)$, it can be shown that
\begin{equation*}
    \mathcal{P}(x,t) \mathcal{Q}(x,t)g^\prime(x,t)=0
\end{equation*}
This implies that the orthogonal projection of the vector $\mathcal{Q}(x,t)g^\prime(x,t)$ onto the tangent plane $\comment{T_{x}^t}$ is $0$. Thus, $\mathcal{Q}(x,t)g^\prime(x,t)$ must be orthogonal to $\comment{T_{x}^t}$.
\end{proof}
Therefore, in the equality-constrained problem, the time-varying projected gradient flow system after a change of variables in \eqref{eq: ode3 change variable gradient} can be regarded as a composition of a time-varying projected term $\mathcal{P}(e+h_2(t),t)\nabla_x f(e+h_2(t),t)$, a time-varying constraint-driven term $\mathcal{Q}(e+h_2(t),t)g^\prime(e+h_2(t),t)$ and an inertia term $\dot{h}_2(t)$ due to the time variation of the local minimum trajectory.

\subsection{A unified view for unconstrained and equality-constrained problems}
By introducing the Lagrange functional in \eqref{eq: lagrange with lambda} and \eqref{eq: langange with lambda bar}, we can unify the analysis of how the temporal variation and the proximal regularization help reshape the optimization landscape and potentially make the landscape become one-point strongly convex over a larger region, for both unconstrained and equality constrained problems. This unified view is illustrated in Table \ref{table:unified}. 

\section{Main results} \label{sec: main results} 
In this section, we study the jumping, tracking and escaping properties for the time-varying nonconvex optimization.
\comment{
\subsection{Jumping}
\comment{The following theorem shows that the solution of \eqref{eq: ODE3} could jump to the dominant trajectory as long as the time-interval of such domination is large enough.}
\begin{theorem}[Sufficient conditions for jumping from $h_1(t)$ to $h_2(t)$] \label{thm: jump}
Suppose that the local minimum trajectory $h_2(t)$ is a {$(\alpha,w)$-dominant trajectory} with respect to $h_1(t)$ during $[t_1,t_2]$ over the region $D_{v,\rho,r_2}$. Let $e_1 \in D_v^\prime$ be the initial point of \eqref{eq: ode3 change variable}, and consider $\bar e(t)$ defined in \eqref{eq: e bar}.
Assume that $U(e,t,\alpha)$ is non-singular for all $t\in [t_1,t_2]$ and $e\in D_{v,\rho,r_2}$
and there exists a constant $\theta \in (0,1)$ such that
\begin{equation} \label{eq: t_2-t_1 uniform}
    t_2-t_1 \geq \max \left\{ \frac{\alpha \rho}{(r_2-\rho)\theta w},\frac{\alpha \ln \left(\frac{\mynorm{e_1-\bar{e}(t_1)}}{r_2-\rho} \right)}{(1-\theta)w}  \right\}.
\end{equation}
Then, the solution of \eqref{eq: ODE3} will $(v,r_2 )$-jump from $h_1(t)$ to $h_2(t)$ over the time interval $[t_1, t_2]$.
\end{theorem}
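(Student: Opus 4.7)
The plan is to establish the jump via a change of variables and a Lyapunov argument based on the distance of the error trajectory to the perturbed equilibrium $\bar e(t)$. First, I would rewrite \eqref{eq: ODE3} in the error coordinates $e(t) = x(t) - h_2(t)$, obtaining $\dot e(t) = -\tfrac{1}{\alpha} U(e(t),t,\alpha)$ as in \eqref{eq: ode3 change variable}. Since $e_1 \in D_v' \subseteq D_{v,\rho,r_2}$, the invariance property \eqref{eq: invariance-like set} in the definition of the region of domination guarantees that $e(t) \in D_{v,\rho,r_2}$ for all $t \in [t_1, t_2]$, so the one-point strong monotonicity \eqref{eq: uniform one-point convex [t_1,t_2]} applies throughout the interval. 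The assumed non-singularity of $U$ on $D_{v,\rho,r_2}$, combined with the implicit function theorem applied to $U(\bar e(t),t,\alpha)=0$, also ensures that $\bar e(t)$ is continuously differentiable on $[t_1,t_2]$.

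Next I would analyze the Lyapunov candidate $V(t) = \mynorm{e(t) - \bar e(t)}$. Differentiating $V(t)^2$ and applying the one-point strong monotonicity yields
\begin{equation*}
\tfrac{d}{dt}\tfrac{1}{2}V(t)^2 = -\tfrac{1}{\alpha}(e(t)-\bar e(t))^\top U(e(t),t,\alpha) - (e(t)-\bar e(t))^\top \dot{\bar e}(t) \leq -\tfrac{w}{\alpha} V(t)^2 + V(t)\mynorm{\dot{\bar e}(t)},
\end{equation*}
and consequently $\dot V(t) \leq -\tfrac{w}{\alpha} V(t) + \mynorm{\dot{\bar e}(t)}$. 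I would then split the contraction rate as $w = \theta w + (1-\theta)w$ to isolate two effects: the factor $(1-\theta)w/\alpha$ drives the pure exponential contraction of the initial error $V(t_1)$ toward zero, while the remaining $\theta w/\alpha$ is reserved to absorb the drift of $\bar e$.

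The two terms inside the $\max$ in \eqref{eq: t_2-t_1 uniform} correspond exactly to the minimum durations required by each effect. The second term, $\alpha\ln(\mynorm{e_1-\bar e(t_1)}/(r_2-\rho))/((1-\theta)w)$, is the classical time needed for the homogeneous part to shrink $\mynorm{e_1 - \bar e(t_1)}$ down to $r_2-\rho$ at rate $(1-\theta)w/\alpha$, and follows from Gronwall applied to the comparison inequality above. The first term, $\alpha\rho/((r_2-\rho)\theta w)$, arises by bounding the forced term $\int_{t_1}^{t_2} e^{-(1-\theta)w(t_2-s)/\alpha}\mynorm{\dot{\bar e}(s)}\,ds$ via an integration-by-parts that trades the $L^1$ arc-length of $\bar e$ for the $L^\infty$ bound $\mynorm{\bar e(t)}\leq\rho$ guaranteed by \eqref{eq: e bar}. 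Combining the two estimates yields $V(t_2)\leq r_2 - \rho$, so the triangle inequality gives $\mynorm{e(t_2)} \leq V(t_2) + \mynorm{\bar e(t_2)} \leq r_2$; hence $x(t_2)\in\mathcal{B}_{r_2}(h_2(t_2))\cap\mathcal{M}(t_2)$, which by the local one-point strong convexity of $L$ around $h_2$ is contained in $RA^{\mathcal{M}(t_2)}(h_2(t_2))$. Together with $\mathcal{B}_v(h_1(t_1))\cap\mathcal{M}(t_1) \subseteq RA^{\mathcal{M}(t_1)}(h_1(t_1))$ from the construction of $D_v'$, this establishes the $(v,r_2)$-jump.

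The step I expect to be the main obstacle is making the absorption of the drift $\mynorm{\dot{\bar e}(t)}$ quantitative using only the set-theoretic bound $\mynorm{\bar e(t)} \leq \rho$, since no pointwise Lipschitz estimate on $\bar e$ is available from the hypotheses. Encoding this trade-off into the specific threshold $\alpha\rho/((r_2-\rho)\theta w)$ will require the careful integration-by-parts noted above, together with a monotonicity argument showing that $V(t)$ cannot exit the ball of radius $r_2-\rho$ once it has entered it, so that the $\theta$-splitting actually yields the claimed bound at time $t_2$ rather than merely somewhere in $(t_1,t_2)$.
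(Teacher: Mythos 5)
Your change of variables, invariance via \eqref{eq: invariance-like set}, differentiability of $\bar e(t)$ from the implicit function theorem, Lyapunov candidate $\tfrac12\mynorm{e-\bar e}^2$, the differential inequality $\dot V\leq -\tfrac{w}{\alpha}\mynorm{e-\bar e}^2+\mynorm{\dot{\bar e}}\mynorm{e-\bar e}$, the $\theta$-split of the rate, and the Gr\"onwall/comparison estimate producing the second term of the max in \eqref{eq: t_2-t_1 uniform} all match the paper's argument exactly.

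The place where you depart, and where there is a genuine gap, is the derivation of the first term $\alpha\rho/((r_2-\rho)\theta w)$. The paper never integrates by parts. It introduces the scalar $\delta:=\sup_{t\in[t_1,t_2]}\mynorm{\dot{\bar e}(t)}$ and uses the $\theta$-split as a standard ultimate-boundedness argument: outside the time-varying ball $\mathcal{B}_{\alpha\delta/(\theta w)}(\bar e(t))$ one has $\dot V\leq -(1-\theta)\tfrac{w}{\alpha}\mynorm{e-\bar e}^2<0$, so the solution is drawn into and then retained near that ball; the condition $\tfrac{\alpha\delta}{\theta w}\leq r_2-\rho$ guarantees this residual ball sits inside $\mathcal{B}_{r_2-\rho}(\bar e(t))$, and the stated threshold on $t_2-t_1$ is then read off by combining that inequality with $\delta\geq\rho/(t_2-t_1)$. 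Your proposed integration by parts of the forced term $\int_{t_1}^{t_2}e^{-(1-\theta)w(t_2-s)/\alpha}\mynorm{\dot{\bar e}(s)}\,ds$ does not go through as written: the integrand contains $\mynorm{\dot{\bar e}(s)}$, which is \emph{not} the derivative of $\mynorm{\bar e(s)}$, so you cannot trade it for the $L^\infty$ bound $\mynorm{\bar e}\leq\rho$ by parts. To integrate by parts you would need the vector $\dot{\bar e}(s)$ (not its norm) under the integral, which in turn requires staying with the vector differential inequality for $\xi=e-\bar e$ before taking norms; doing so reintroduces $\dot\xi$ into the boundary/remainder terms and does not cleanly collapse to the threshold claimed. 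You correctly flagged this step as the main obstacle; the paper's resolution is the cruder but effective route via the uniform bound $\delta$ and the ultimate-bound lemma, not an arc-length trade.
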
}
\begin{proof}
\comment{First, notice that if $U(e,t,\alpha)$ is uniformly non-singular for all $t\in [t_1,t_2]$ and $e\in D_{v,\rho,r_2}$, then $\bar{e}(t)$ defined in \eqref{eq: e bar} is continuously differentiable for $t\in[t_1,t_2]$.}
Then, notice that every solution of \eqref{eq: ode3 change variable} with an initial point in \comment{$D_{v,\rho,r_2}\cap\mathcal{M}(t_1)$} will remain in \comment{$D_{v,\rho,r_2}$}. It follows from Theorem \ref{lemma: exist and unique} that \eqref{eq: ode3 change variable} has a unique solution defined for all $t\in[t_1,t_2]$ whenever $e_1\in\comment{ D_{v,\rho,r_2}}\cap \mathcal{M}(t_1)$.

We take $V(e(t),t)=\frac{1}{2}\mynorm{e(t)-\bar{e}(t)}^2$ as the Lyapunov function for the system \eqref{eq: ode3 change variable}. Because of Lemma \ref{lemma: invariance}, any solution of  \eqref{eq: ode3 change variable} stating in $\mathcal{M}(t_1)$ will remain in $\mathcal{M}(t)$ for all $t\geq t_1$. Therefore, the derivative of $V(e(t),t)$ along the trajectories of \eqref{eq: ode3 change variable} in  $\mathcal{M}(t)$ can be expressed as 
\comment{
\begin{align}
\nonumber\dot{V}=&(e(t)-\bar{e}(t))^\top \Big(-\frac{1}{\alpha}U(e(t),t,\alpha) \Big) -\\ \nonumber&(e(t)-\bar{e}(t))^\top \dot{\bar{e}}(t),
\ \forall e(t) \in D_{v,\rho,r_2}\cap\mathcal{M}^{h_2}(t)\\
\nonumber\leq& -\frac{w}{\alpha}\mynorm{e(t)-\bar{e}(t)}^2 +\mynorm{\dot{\bar{e}}(t)}\mynorm{e(t)-\bar{e}(t)}, \\
\nonumber&\forall e(t) \in D_{v,\rho,r_2}\cap\mathcal{M}^{h_2}(t)\\
\nonumber\leq&-(1-\theta)\frac{w}{\alpha}\mynorm{e(t)-\bar{e}(t)}^2 -\theta\frac{w}{\alpha}\mynorm{e(t)-\bar{e}(t)}^2 \\
\nonumber&+\delta\mynorm{e(t)-\bar{e}(t)}, \forall e(t) \in D_{v,\rho,r_2}\cap\mathcal{M}^{h_2}(t)\\
\nonumber\leq& -(1-\theta)\frac{w}{\alpha}\mynorm{e(t)-\bar{e}(t)}^2,\forall e(t) \in \\ 
&  \{e(t) \in D_{v,\rho,r_2}\cap\mathcal{M}^{h_2}(t): \mynorm{e(t)-\bar{e}(t)}\geq \frac{\alpha \delta}{\theta w}\}  \label{eq: jump dot V along ODE3} 
\end{align}
where $\delta\coloneqq\sup_{t\in[t_1,t_2]}\mynorm{\dot{\bar{e}}(t)}$.  By taking $e_1\in \comment{D_v^\prime \cap\mathcal{M}(t_1)}$, since \comment{$D_{v,\rho,r_2}$ satisfies the condition \eqref{eq: invariance-like set}},  the solution of \eqref{eq: ode3 change variable} starting from $e_1$ will stay in \comment{$D_{v,\rho,r_2}$}. Thus, the bound in \eqref{eq: jump dot V along ODE3} is valid.
To ensure that the trajectory of \eqref{eq: ode3 change variable}  enters the time-varying set \comment{$\mathcal{B}_{r_2-\rho}(\bar{e}(t))$}, it is sufficient to have $\frac{\alpha \delta}{\theta w}\leq r_2-\rho$ or $\alpha \leq \frac{(r_2-\rho)\theta w}{\delta}$. Since $\delta=\sup_{t\in[t_1,t_2]}\mynorm{\dot{\bar{e}}(t)}\geq \frac{\rho}{t_2-t_1}$. We can further bound $\alpha$ as $\alpha\leq \frac{(r_2-\rho)\theta w (t_2-t_1)}{\rho} $ which is equivalent to $t_2-t_1\geq \frac{\alpha \rho}{(r_2-\rho)\theta w}$.}

Now, it is desirable to show that if the time interval $[t_1,t_2]$ is large enough, the solution of \eqref{eq: ode3 change variable gradient} will enter the time-varying set $\mathcal{B}_{r_2-\rho}(\bar{e}(t))$ with an exponential convergence rate.
Since $\dot{V}(\cdot,\cdot)$ is negative in $\Gamma(t)\coloneqq\{e\in \comment{D_{v,\rho,r_2}}\cap\mathcal{M}^{h_2}(t): \mynorm{e-\bar{e}(t)}\geq \frac{\alpha \delta}{\theta w}\}$ and because of \eqref{eq: invariance-like set}, a trajectory starting from $\Gamma(t_1)$ must stay in \comment{$D_{v,\rho,r_2}$} and move in a direction of decreasing $V(e,t)$. The function $V(e,t)$ will continue decreasing until the trajectory enters the set $\{e\in \comment{D_{v,\rho,r_2}}\cap\mathcal{M}^{h_2}(t): \mynorm{e-\bar{e}(t)}\leq \frac{\alpha \delta}{\theta w}\}$ or until time $t_2$. Let us show that the trajectory enters $\mathcal{B}_{r_2-\rho}(\bar{e}(t))$ before $t_2$ if $t_2-t_1>\frac{\alpha}{w(1-\theta)} \ln(\frac{\mynorm{e_1-\bar{e}(t_1)}}{r_2-\rho})$.
Since $V(e(t),t)=\frac{1}{2}\mynorm{e(t)-\bar{e}(t)}^2$, \eqref{eq: jump dot V along ODE3} can be written as
\begin{align*}
&\dot{V}(e(t),t) \leq -(1-\theta)\frac{2w}{\alpha} V(e(t),t), \\
&\forall e \in \Big\{e \in \comment{D_{v,\rho,r_2}}\cap\mathcal{M}^{h_2}(t): \mynorm{e(t)-\bar{e}(t)} \geq \frac{\alpha \delta}{\theta w}\} \Big\},
\end{align*}
By the comparison lemma\cite[Lemma 3.4]{khalil2002nonlinear},
\begin{align*}
V(e(t),t) \leq \exp\comment{\{}-(1-\theta)\frac{2w}{\alpha}(t-t_1)\comment{\}} V(e_1,t_1)
\end{align*}
Hence,
\begin{align*}
\mynorm{e(t)-\bar{e}(t)}\leq \exp\comment{\{}-(1-\theta)\frac{w}{\alpha}(t-t_1)\comment{\}} \mynorm{e_1-\bar{e}(t_1)}.
\end{align*}
The inequality  $\mynorm{e(t_2)-\bar{e}(t_2)}\leq  r_2-\rho$ holds if $t_2-t_1\geq \frac{\alpha}{w(1-\theta)} \ln(\frac{\mynorm{e_1-\bar{e}(t_1)}}{r_2-\rho})$.
\end{proof}

We also offer an approach based on the time-averaged dynamics over a small time interval and name it ``small interval averaging"\footnote{Our averaging approach distinguishes from classic averaging methods \cite{hale1980ODE,khalil2002nonlinear,teel1999semi,aeyels1999exponential} and the partial averaging method \cite{peuteman2002exponential} in the sense that: (1) it is averaged over a small time interval instead of the entire time horizon, and (2) there is no two-time-scale behavior because there is no parameter in \eqref{eq: ode2 change variable} that can be taken sufficiently small.}.
This technique guarantees that the solution of the  time-varying differential equation (or system)  will converge to a residual set of the origin of \eqref{eq: ode2 change variable}, provided that: (i) there is a time interval $[t_1,t_2]$ such that the temporal variation makes the averaged objective function during this interval  locally one-point strongly convex around $h_2(t)$ not only just over a neighborhood of $h_2(t)$  but also over a  neighborhood of $h_1(t)$,
(ii) the original time-varying system is not too distant from the time-invariant averaged system,
(iii) $[t_1,t_2]$ is large enough to allow the transition of points from a neighborhood of $h_1(t)$ to a neighborhood of $h_2(t)$.
Therefore, the time interval $[t_1,t_2]$ and the time-averaged dynamics over this time interval serve as  a certificate for jumping from $h_1(t)$ to $h_2(t)$. In what follows, we introduce the notion of averaging a time-varying function over a time interval $[t_1,t_2]$.
\comment{
\begin{definition} \label{def: interval average}
A function $U_{\text{av}}(e,\alpha)$ is said to be the \textbf{average function} of $U(e,t,\alpha)$ over the time interval $[t_1,t_2]$ if 
\begin{equation*} 
  U_{\text{av}}(e,\alpha)= \frac{1}{t_2-t_1} \int_{t_1}^{t_2} U(e,t,\alpha) d\tau
\end{equation*}
\end{definition}
The averaged system of  \eqref{eq: ode3 change variable} over the time interval $[t_1,t_2]$ can be written as 
\begin{equation}   \label{eq: partial average system constrained}
\dot{e}=-\frac{1}{\alpha}U_{\text{av}}(e,\alpha)
\end{equation}
Then, \eqref{eq: ode3 change variable} can be regarded as a time-invariant system \eqref{eq: partial average system constrained} with the time-varying perturbation term $p(e(t),t,\alpha)=-\frac{1}{\alpha}(U(e(t),t,\alpha)-U_{\text{av}}(e,\alpha))$.}
\comment{For the averaged system, we can define the on-average region of domination $D_{v,\rho,r_2}$ for $h_2(t)$ with respect to $h_1(t)$ similarly as Definition \ref{def: region of domination} by replacing \eqref{eq: e bar} with 
\begin{equation}\label{eq: e bar average}
    \rho\geq \sup_{\substack{\bar{e}:\mynorm{\bar{e}}<r_2, 0=U_{av}(\bar{e},\alpha)}} \mynorm{\bar{e}}.
\end{equation}
The corresponding on-average $(\alpha,w)$-{dominant trajectory} with respect to $h_1(t)$ during $[t_1,t_2]$ over the region $D_{v,\rho,r_2}$ can also be defined similarly as Definition \ref{def: dominant trajectory} by replacing \eqref{eq: uniform one-point convex [t_1,t_2]} with 
\begin{align} \label{eq: strong convex for jump average}
 &U_{av}(e,\alpha)^\top (e-\bar{e}) \geq w\mynorm{e-\bar{e}}^2, \\
\nonumber &\forall e \in D_{v,\rho,r_2}\cup \left(\cup_{[t_1,t_2]}\mathcal{M}(t)\right)
\end{align}
where $\bar{e}$ is defined in \eqref{eq: e bar average}.
}
\comment{
\begin{theorem}[Sufficient conditions for jumping from $h_1(t)$ to $h_2(t)$ using averaging]  \label{thm: jump average}
Suppose that the local minimum trajectory $h_2(t)$ is a on-average {$(\alpha,w)$-dominant trajectory} with respect to $h_1(t)$ during $[t_1,t_2]$ over the region $D_{v,\rho,r_2}$. Assume that the following conditions are satisfied:
\begin{enumerate}
	    \item    {There exist some time-varying scalar functions $\delta_1(\alpha,t)$ and $\delta_2(\alpha,t)$ such that}
	    \begin{align} \label{eq: perturbation 1 jumping}
    	\mynorm{p(e(t),t,\alpha)}\leq \delta_1(\alpha,t) \mynorm{e-\bar{e}} + \delta_2(\alpha,t),
    \end{align}
    for all $t\in [t_1,t_2]$, and there exist some positive constants $\eta_1(\alpha)$ and $\eta_2(\alpha)$ such that
    \begin{align} \label{eq: perturbation 2 jumping}
    	\int_{t_1}^{t}\delta_1(\alpha,\tau)d\tau \leq \eta_1(\alpha)(t-t_1)+\eta_2(\alpha).
    \end{align}
    \item  The inequality
    \begin{align} \label{eq: t_2-t_1}
& \beta_2(\alpha) \mynorm{e_1-\bar{e}} e^{-\beta_1(\alpha)(t_2-t_1)} +{ \beta_2(\alpha)}\\
\nonumber&\int_{t_1}^{t_2}e^{-\beta_1(\alpha)(t_2-\tau)}\delta_2(\alpha, \tau)d\tau  \leq  r_2-\rho, \forall e_1 \in D_v^\prime
    \end{align}
holds, where $\beta_1(\alpha)= \frac{w}{\alpha}-\eta_1(\alpha)>0$ and $\beta_2(\alpha)=e^{\eta_2(\alpha)}\geq 1$.
\end{enumerate}
Then, the solution of \eqref{eq: ODE3} will $(v,r_2 )$-jump from $h_1(t)$ to $h_2(t)$ over the time interval $[t_1, t_2]$.
\end{theorem}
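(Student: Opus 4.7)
The plan is to view the time-varying system \eqref{eq: ode3 change variable}, written in the form $\dot{e}=-\frac{1}{\alpha}U(e,t,\alpha)$, as a perturbation of its small-interval averaged counterpart \eqref{eq: partial average system constrained}, with perturbation $p(e,t,\alpha)=-\frac{1}{\alpha}(U(e,t,\alpha)-U_{\text{av}}(e,\alpha))$. Because $h_2(t)$ is an on-average $(\alpha,w)$-dominant trajectory over $D_{v,\rho,r_2}$, the averaged field $U_{\text{av}}$ satisfies the one-point strong monotonicity inequality \eqref{eq: strong convex for jump average}, which suggests choosing the quadratic Lyapunov-like function $V(e)=\frac{1}{2}\mynorm{e-\bar{e}}^2$, where $\bar{e}$ is defined by \eqref{eq: e bar average}. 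As in the proof of Theorem \ref{thm: jump}, the invariance property \eqref{eq: invariance-like set} embedded in the definition of the region of domination, combined with Lemma \ref{lemma: invariance} and Theorem \ref{lemma: exist and unique}, guarantees that a solution of \eqref{eq: ode3 change variable} initiated in $D_v^\prime\cap\mathcal{M}(t_1)$ exists, is unique on $[t_1,t_2]$, and remains in $D_{v,\rho,r_2}\cap\mathcal{M}^{h_2}(t)$, so the averaging-based bounds are legitimately applied throughout the interval.

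Next, I will differentiate $V$ along the actual (non-averaged) trajectory. Writing $\dot V=(e-\bar{e})^\top\dot{e}=-\tfrac{1}{\alpha}(e-\bar{e})^\top U_{\text{av}}(e,\alpha)+(e-\bar{e})^\top p(e,t,\alpha)$ and using \eqref{eq: strong convex for jump average} together with the Cauchy--Schwarz inequality and the perturbation bound \eqref{eq: perturbation 1 jumping}, I obtain
\begin{equation*}
\dot V \leq \bigl(-\tfrac{w}{\alpha}+\delta_1(\alpha,t)\bigr)\mynorm{e-\bar{e}}^2+\delta_2(\alpha,t)\mynorm{e-\bar{e}}.
\end{equation*}
Setting $u(t)\coloneqq\mynorm{e(t)-\bar{e}}=\sqrt{2V(e(t))}$, this yields the scalar differential inequality $\dot u(t)\leq \bigl(-\tfrac{w}{\alpha}+\delta_1(\alpha,t)\bigr)u(t)+\delta_2(\alpha,t)$, which is the natural object to feed into the comparison lemma \cite[Lemma 3.4]{khalil2002nonlinear}.

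Applying the comparison lemma (equivalently, the integrating-factor form of Gr\"onwall's inequality) gives
\begin{equation*}
u(t)\leq u(t_1)\exp\!\Bigl(\!\int_{t_1}^{t}\!\bigl(-\tfrac{w}{\alpha}+\delta_1(\alpha,\tau)\bigr)d\tau\Bigr)+\int_{t_1}^{t}\!\exp\!\Bigl(\!\int_{\tau}^{t}\!\bigl(-\tfrac{w}{\alpha}+\delta_1(\alpha,s)\bigr)ds\Bigr)\delta_2(\alpha,\tau)d\tau.
\end{equation*}
The sub-affine growth hypothesis \eqref{eq: perturbation 2 jumping} on $\int\delta_1$ lets me replace both exponentials by $\beta_2(\alpha)\exp(-\beta_1(\alpha)(t-\tau))$ with $\beta_1(\alpha)=\tfrac{w}{\alpha}-\eta_1(\alpha)>0$ and $\beta_2(\alpha)=e^{\eta_2(\alpha)}$. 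Evaluating at $t=t_2$ and using condition \eqref{eq: t_2-t_1} then yields $\mynorm{e(t_2)-\bar{e}}\leq r_2-\rho$, which together with $\mynorm{\bar{e}}\leq\rho$ delivers $\mynorm{e(t_2)}\leq r_2$, i.e.\ $x(t_2)\in\mathcal{B}_{r_2}(h_2(t_2))\cap\mathcal{M}(t_2)$. Combined with the hypothesis that this ball lies in $RA^{\mathcal{M}(t_2)}(h_2(t_2))$ (from the region-of-domination conditions) and with $\mathcal{B}_v(h_1(t_1))\cap\mathcal{M}(t_1)\subseteq RA^{\mathcal{M}(t_1)}(h_1(t_1))$, this establishes the $(v,r_2)$-jump.

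The main technical obstacle I anticipate is handling the coupling between the perturbation bound and the invariance: the inequality for $\dot V$ is only valid while the trajectory remains inside $D_{v,\rho,r_2}$, and condition \eqref{eq: t_2-t_1} must be used to certify that $u(t)$ is monotonically controlled along the entire interval (not just at the endpoint) so that the trajectory cannot exit $D_{v,\rho,r_2}$ before $t_2$. This is resolved by noting that the right-hand side of the Gr\"onwall estimate is itself bounded for all $t\in[t_1,t_2]$ by the same expression evaluated at $t_2$ (after a routine monotonicity argument on the integral term, or by invoking the invariance property \eqref{eq: invariance-like set} directly), which is where the structure of $D_{v,\rho,r_2}$ as a forward-invariant region does the heavy lifting. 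The remaining computations are routine integrations mirroring the uniform/exponential estimates used in Theorem \ref{thm: jump}.
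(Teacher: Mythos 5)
Your proof follows essentially the same route as the paper's: the same Lyapunov function $V(e)=\tfrac12\mynorm{e-\bar{e}}^2$, the same decomposition of the field into the averaged part plus the perturbation $p$, the same passage to a scalar quantity proportional to $\sqrt{V}$, the same Gr\"onwall-type exponential estimate with $\beta_1(\alpha)$ and $\beta_2(\alpha)$, and the same use of the invariance property of $D_{v,\rho,r_2}$ to legitimize the bound on all of $[t_1,t_2]$. One place where you are terser than the paper and should be careful: writing $\dot u(t)\leq(-\tfrac{w}{\alpha}+\delta_1)u+\delta_2$ with $u=\mynorm{e-\bar{e}}$ implicitly assumes $u$ is differentiable, which fails when $e(t)=\bar{e}$. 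The paper handles this by verifying that the upper right Dini derivative $D^+W(t)$ (with $W=\sqrt{V}$) satisfies the same inequality at points where $V=0$, using a first-order Taylor expansion of $e(t+\epsilon)$ to show $D^+W(t)\leq\tfrac{1}{\sqrt 2}\delta_2(\alpha,t)$. Since the comparison lemma you invoke does accommodate Dini derivatives, your argument can be repaired with a sentence to that effect, but as written this step is not fully justified.
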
}
\begin{proof}
As shown in the proof of Theorem \ref{thm: jump}, the differential equation \eqref{eq: ode3 change variable} has a unique solution defined for all $t\in[t_1,t_2]$ that stays in $\mathcal{M}^{h_2}(t)$ whenever $e_1\in \comment{D_{v,\rho,r_2}}\cap \mathcal{M}^{h_2}(t_1)$.
By using $V(e)=\frac{1}{2}\mynorm{e-\bar{e}}^2: \comment{D_{v,\rho,r_2}} \rightarrow \mathbb{R}$ as the Lyapunov function for the system \eqref{eq: ode3 change variable},  the derivative of $V(e)$ along the trajectories of \eqref{eq: ode3 change variable} can be obtained~as 
\begin{align*}
\nonumber\dot{V}(e)&=(e-\bar{e})^\top \comment{\Big( -\frac{1}{\alpha}U_{av}(e,\alpha) +p(e,\alpha,t)\Big)}\\
&\leq -\frac{w}{\alpha}\mynorm{e-\bar{e}}^2 + \delta_1(\alpha,t)\mynorm{e-\bar{e}}^2+\delta_2(\alpha,t)\mynorm{e-\bar{e}}
\end{align*}
Since $V(e)=\frac{1}{2}\mynorm{e-\bar{e}}^2$, one can derive an upper bound on $\dot{V}$ as
\begin{equation*}
    \dot{V}(e) \leq -\Big[ \frac{2w}{\alpha} - 2\delta_1(\alpha,t)\Big]V(e) + \delta_2(\alpha,t) \sqrt{2V(e)}
\end{equation*}
To obtain a linear differential inequality, we consider $W(t)=\sqrt{V(e(t))}$.
When $V(e(t))\neq 0$, it holds that $\dot{W}=\dot{V}/2\sqrt{V}$ and 
\begin{equation} \label{eq: W inequailty}
    \dot{W}\leq -\Big[ \frac{w}{\alpha} - \delta_1(\alpha,t)\Big]W + \frac{\delta_2(\alpha,t)}{\sqrt{2}}
\end{equation}
When $V(e(t))=0$, we have $e(t)=\bar{e}$. Writing the Tylor expansion of $e(t+\epsilon)$ for a sufficiently small $\epsilon$ yields that
\comment{
\begin{align*}
 e(t+\epsilon)=&e(t)+\epsilon\Big( -\frac{1}{\alpha}U_{av}(e,\alpha) +p(e,\alpha,t)\Big) +o(\epsilon)\\
 =&\bar{e}+\epsilon p(\bar{e},\alpha,t) +o(\epsilon)
\end{align*}}
This implies that
 \begin{align*}
  V(e(t+\epsilon))=\frac{\epsilon^2}{2}\mynorm{\comment{p(\bar{e},\alpha,t)}}^2 + o(\epsilon^2).
 \end{align*}
Therefore,
\begin{equation} \label{eq: upper right hand of W}
    \begin{aligned}
    D^+W(t)&=\lim \sup _{\epsilon \rightarrow 0^+} \frac{W(t+\epsilon)-W(t)}{\epsilon}\\
     &=\lim \sup _{\epsilon \rightarrow 0^+} \frac{\sqrt{\frac{\epsilon^2}{2}\mynorm{\comment{p(\bar{e},\alpha,t)}}^2 + o(\epsilon^2)}}{\epsilon} \\
   &= \frac{1}{\sqrt{2} }\mynorm{\comment{p(\bar{e},\alpha,t)}} \\
      &\leq \frac{1}{\sqrt{2} } \delta_2 (\alpha,t)
    \end{aligned}
\end{equation}
Thus, \eqref{eq: W inequailty} is also satisfied when $V=0$, and accordingly $D^+W(t)$ satisfies \eqref{eq: W inequailty} for all values of $V$. Since $W$ is scalar and the right-hand side of \eqref{eq: W inequailty} is continuous in $t$ and locally Lipschitz in $W$  for all $t\in [t_1,t_2]$ and  $W\geq 0$, the comparison lemma is applicable. In addition, the right-hand side of \eqref{eq: W inequailty} is linear and a closed-form expression for the solution of the first-order linear differential equation of $W$ can be obtained. Hence, $W(t)$ satisfies 
\begin{equation} \label{eq: W(t)}
    W(t)\leq \phi(t,t_1)W(t_1)+ \frac{1}{\sqrt{2}} \int_{t_1}^t \phi(t,\tau)\delta_2(\alpha,\tau) d\tau
\end{equation}
where the translation function $\phi(t,t_1)$ is given by 
\begin{equation} 
    \phi(t,t_1)=\exp\Big[ -\frac{w}{\alpha }(t-t_1)+\int_{t_1}^t\delta_1(\alpha,\tau)d\tau \big].
\end{equation}
\begin{equation} \label{eq: bound on x}
    \mynorm{e(t)-\bar{e}}\leq  \phi(t,t_1)\mynorm{e_1-\bar{e}}+\int_{t_1}^t \phi(t,\tau)\delta_2(\alpha,\tau)d\tau
\end{equation}
Since $\int_{t_1}^t\delta_1(\alpha,\tau)d\tau\leq \eta_1(\alpha)  (t-t_1)+ \eta_2(\alpha)$, and using $\beta_1(\alpha)= \frac{w}{\alpha}-\eta_1(\alpha)>0$ and $ \beta_2(\alpha)=e^{\eta_2(\alpha)}\geq 1$ in \eqref{eq: bound on x}, it holds that
\begin{align} \label{eq:bound in averaging method constrained}
\nonumber  \mynorm{e(t)-\bar{e}}\leq& \beta_2(\alpha)\mynorm{e_1-\bar{e}}e^{-\beta_1(\alpha) (t-t_1)}\\
  &+\beta_2(\alpha)\int_{t_1}^t e^{-\beta_1(\alpha) (t-\tau)}\delta_2(\alpha,\tau)d\tau
\end{align}
By taking $e_1 \in \comment{D_v^\prime \subseteq D_{v,\rho,r_2}}$, since $D_{v,\rho,r_2}$ retains trajectories starting from a feasible initial point with respect to the dynamics \eqref{eq: ode3 change variable} for $t\in[t_1,t_2]$,  any trajectory of \eqref{eq: ode3 change variable} starting from $D_v^\prime$ will stay in $\comment{D_{v,\rho,r_2}}$ and remain in the feasible set $\mathcal{M}^{h_2}(t)$. Thus, the bound in \eqref{eq:bound in averaging method constrained} is valid.
If $t_2$ satisfies 
\begin{align*}
&\beta_2(\alpha)\mynorm{e_1-\bar{e}}e^{-\beta_1(\alpha) (t_2-t_1)}\\
&+\beta_2(\alpha)\int_{t_1}^{t_2} e^{-\beta_1(\alpha) (t_2-\tau)}\delta_2(\alpha,\tau)d\tau \leq r_2-\rho
\end{align*}
then $ \mynorm{e(t_2)-\bar{e}}\leq r_2-\rho$. Since $\bar{e} \in \comment{\mathcal{B}_{\rho}(0)}$, we have $ \mynorm{e(t_2)}\leq r_2$. This shows that the solution of \eqref{eq: ode2 change variable} jumps from $h_1(t)$ to $h_2(t)$ during the time interval $[t_1,t_2]$. 
\end{proof}

\comment{
\begin{remark}
If the global minimum trajectory is the dominant trajectory with respect to the spurious local minimum trajectories, then Theorems \ref{thm: jump} and \ref{thm: jump average} guarantee that the solution of \eqref{eq: ODE3} will jump to the neighborhood of the global minimum trajectory.
\end{remark}
\begin{remark}
The condition in Theorem \ref{thm: jump} and Condition 2 in Theorem \ref{thm: jump average} mean that $[t_1,t_2]$ needs to be large enough to allow the transition of points from a neighborhood of $h_1(t)$ to a neighborhood of $h_2(t)$.
Condition 1 in Theorem \ref{thm: jump average} means that the original time-varying system should not be too distant from the time-invariant averaged system.
\end{remark}
\begin{remark}
To make the one-point strong monotonicity conditions \eqref{eq: uniform one-point convex [t_1,t_2]} and \eqref{eq: strong convex for jump average} hold, the inertia parameter $\alpha$ cannot be too small.
\end{remark}}
\begin{remark}
The locally one-point strongly convex parameter $w$ in \eqref{eq: uniform one-point convex [t_1,t_2]} and \eqref{eq: strong convex for jump average} determines the convergence rate during $[t_1,t_2]$, which is reflected in \eqref{eq: t_2-t_1 uniform} and \eqref{eq: t_2-t_1}.
\end{remark}
\begin{remark}
In Theorem \ref{thm: jump average}, to ensure that the time-invariant partial interval averaged system is a reasonable approximation of the time-varying system, the time interval $[t_1,t_2]$ should not be very large. On the other hand, to guarantee that the solution of \eqref{eq: ode3 change variable} has enough time to jump, the time interval $[t_1,t_2]$ should not be very small. This trade-off is reflected in \eqref{eq: t_2-t_1}.
\end{remark}

\subsection{Tracking}
In this subsection, we study the tracking property of the local minimum trajectory $h_2(t)$. First, notice that if $h_2(t)$ is not constant,  the right-hand side of \eqref{eq: ODE3} is nonzero while the left-hand side is zero. Therefore, $h_2(t)$ is not a solution of \eqref{eq: ODE3} in general. This is because the solution of \eqref{eq: ODE3} approximates the continuous limit of a discrete local trajectory of the sequential {regularized} optimization problem \eqref{eq: regularized constrained opt}.  However, to preserve the optimality of the solution with regards to  the original time-varying optimization problem without any proximal regularization, it is required to guarantee that the solution of \eqref{eq: ODE3} is close to $h_2(t)$.

If the solution of \eqref{eq: ode3 change variable} can be shown to be in a small residual set around $0$ on the time-varying manifold $\mathcal{M}(t)$, then it is guaranteed that $x(t,t_0,x_0)$ tracks its nearby local minimum trajectory. Notice that \eqref{eq: ode3 change variable} can be regarded as a time-varying perturbation of the  system
\begin{equation}   \label{eq: no time-varying term}
   \dot{e}=-\frac{1}{\alpha}\mathcal{P}(e+h_2(t),t)\nabla_x f(e+h_2(t),t),\quad \forall t\geq t_0
\end{equation}
Since $h_2(t)$ is a local minimum trajectory, it is obvious that $e(t) \equiv 0$ is an equilibrium point of \eqref{eq: no time-varying term}. 
In addition, if the time-varying Lagrange function $L(x,\lambda,t)$ with $\lambda$ given in \eqref{eq: lambda} is {locally one-point strongly convex with respect to $x$} around $h_2(t)$ in the time-varying feasible set $\mathcal{M}(t)$, after noticing the fact that the solution of \eqref{eq: ode3 change variable} will remain in $\mathcal{M}^{h_2}(t)$ if the initial point $e_0\in\mathcal{M}^{h_2}(t_0)$ from Lemma \ref{lemma: invariance},
one would expect that the solution of \eqref{eq: ode3 change variable} stays in a small residual set of $e=0$ if the perturbation $\mathcal{Q}(e(t)+h_2(t),t)g^\prime(e(t)+h_2(t),t)+\dot{h}_2(t)$ is relatively small. The perturbation  $\mathcal{Q}(e(t)+h_2(t),t)g^\prime(e(t)+h_2(t),t)+\dot{h}_2(t)$ being  small is equivalent to $\alpha$ being  small. The next theorem shows that \comment{every local minimum trajectory} can be tracked for a relatively small $\alpha$.

\begin{theorem}[Sufficient condition for tracking] \label{thm: Sufficient condition for tracking constrained}
 Assume that the time-varying Lagrange function $L(x,\lambda,t)$ with $\lambda$ given in \eqref{eq: lambda} is \comment{{locally $(c_2,r_2)$-one-point strongly convex} with respect to $x$} around $h_2(t)$. Given $\gamma(t)$ such that $\mynorm{\dot{h}_2(t)}\leq \gamma(t)$, suppose that \comment{there exist time-varying scalar functions $\delta_1(t)$ and $\delta_2(t)$} such that the perturbed gradient due to the time-variation of constraints satisfies the inequality
    \begin{ealign} \label{eq: perturbation 1}
    	\mynorm{\mathcal{Q}(e(t)+h_2(t),t)g^\prime(e(t)+h_2(t),t)}\leq \delta_1(t) \mynorm{e} + \delta_2(t),
    \end{ealign}
    and there exist some positive constants $\eta_1$ and $\eta_2$ such that
    \begin{ealign} \label{eq: perturbation 2}
    	\int_{t_1}^{t}\delta_1(\tau)d\tau \leq \eta_1(t-t_1)+\eta_2.
    \end{ealign}
  If \comment{$\sup_{t\geq {t_1}}( \delta_2 (t)+\gamma(t))$ is bounded} and the following conditions hold
  \begin{subequations} \label{eq: condition for tracking constrained}
  \begin{align}
  &\mynorm{x_0-h_2(0)}\leq \frac{r_2}{e^{\eta_2}},\\ 
  &\alpha\leq \frac{c_2r_2}{e^{\eta_2}\sup_{t\geq \comment{t_1}}( \delta_2 (t)+\gamma(t))+\eta_1r_2 }, \label{eq: tracking upper bound on a}
  \end{align}
  \end{subequations}
  then the solution $x(t,t_0,x_0)$ will {$r_2$-track} $h_2(t)$. More specifically, we have
\begin{align}\label{eq: bound for tracking constrained}
 \nonumber& \mynorm{x(t,t_0,x_0)-h_2(t)}\leq e^{\eta_2}\mynorm{e_1}e^{-(\frac{c_2}{\alpha}-\eta_1) (t-t_1)}\\
  &+e^{\eta_2}\int_{t_1}^t e^{-(\frac{c_2}{\alpha}-\eta_1) (t-\tau)}( \delta_2 (t)+\gamma(t))d\tau \leq r_2.
\end{align}
\end{theorem}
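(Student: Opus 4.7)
The plan is to reuse the Lyapunov-with-averaging template that worked for Theorem \ref{thm: jump average}, now applied to the reference trajectory $h_2(t)$ itself rather than to some interior equilibrium $\bar e(t)$. Write the error system \eqref{eq: ode3 change variable} with $h = h_2$ and take $V(e) = \tfrac{1}{2}\mynorm{e}^2$. Using the Lagrangian decomposition \eqref{eq: langange with lambda bar}, the derivative along trajectories is
\begin{equation*}
\dot V \;=\; -\frac{1}{\alpha} e^\top \nabla_x L\bigl(e+h_2(t),\lambda,t\bigr) \;-\; e^\top \mathcal{Q}\bigl(e+h_2(t),t\bigr) g'\bigl(e+h_2(t),t\bigr) \;-\; e^\top \dot h_2(t).
\end{equation*}
Applying the locally $(c_2,r_2)$-one-point strong convexity from Definition \ref{def: strong convex on manifold} to the first term and the perturbation bounds \eqref{eq: perturbation 1} together with $\mynorm{\dot h_2(t)}\leq \gamma(t)$ to the other two, I get
\begin{equation*}
\dot V \;\leq\; -\frac{c_2}{\alpha}\mynorm{e}^2 + \delta_1(t)\mynorm{e}^2 + \bigl(\delta_2(t)+\gamma(t)\bigr)\mynorm{e},
\end{equation*}
valid as long as the trajectory stays in the one-point convexity region $\{\mynorm{e}\leq r_2\}\cap \mathcal{M}^{h_2}(t)$. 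By Lemma \ref{lemma: invariance}, feasibility is automatic whenever $e_0 \in \mathcal{M}^{h_2}(t_0)$, so only the norm constraint needs to be enforced.

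Next I would pass to $W(t)=\sqrt{V(e(t))}$. Dividing by $\sqrt{V}$ on the set $\{V\neq 0\}$ and handling $V=0$ via the same first-order Taylor argument used in \eqref{eq: upper right hand of W} produces the linear differential inequality
\begin{equation*}
D^+ W(t) \;\leq\; -\Bigl(\frac{c_2}{\alpha}-\delta_1(t)\Bigr) W(t) + \frac{1}{\sqrt{2}}\bigl(\delta_2(t)+\gamma(t)\bigr).
\end{equation*}
The comparison lemma then yields an explicit bound on $\mynorm{e(t)}$; absorbing the sublinear-integral hypothesis \eqref{eq: perturbation 2} into a multiplicative factor $\beta_2 = e^{\eta_2}$ and a rate $\beta_1 = c_2/\alpha - \eta_1$ (positive thanks to \eqref{eq: tracking upper bound on a}) delivers exactly the bound \eqref{eq: bound for tracking constrained}.

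The step I expect to be the main obstacle is the consistency/invariance check: the differential inequality for $\dot V$ is only legitimate while $\mynorm{e(t)}\leq r_2$, so I cannot conclude tracking without first showing the trajectory never leaves this region. I would handle this by the standard continuity/bootstrap argument: define $T^\ast = \sup\{t\geq t_0 : \mynorm{e(\tau)}\leq r_2,\ \forall \tau \in [t_0,t]\}$, apply the comparison-lemma bound on $[t_0,T^\ast)$, and observe that the initial-condition restriction $\mynorm{x_0-h_2(0)}\leq r_2/e^{\eta_2}$ controls the transient term while the inertia bound \eqref{eq: tracking upper bound on a} controls the steady-state term, so the total right-hand side of \eqref{eq: bound for tracking constrained} is at most $r_2$. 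Hence $T^\ast = \infty$, which simultaneously (i) validates the differential inequality globally in time, (ii) supplies the compact set needed to invoke Theorem \ref{lemma: exist and unique} for existence and uniqueness, and (iii) yields the tracking conclusion with residual set of radius $r_2$. The quantitative rate and the residual term in \eqref{eq: bound for tracking constrained} then follow by reading off the exponential-decay plus convolution form from the comparison-lemma integration, identical in structure to \eqref{eq:bound in averaging method constrained} but now with $\bar e \equiv 0$ and with the additional forcing $\gamma(t)$ arising from $\dot h_2$.
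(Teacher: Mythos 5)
Your proof matches the paper's argument essentially step for step: the same Lyapunov function $V(e)=\tfrac12\mynorm{e}^2$, the same use of one-point strong convexity of the Lagrangian plus the perturbation bounds to obtain the quadratic differential inequality, the same passage to $W=\sqrt{V}$ with the Taylor-expansion treatment at $V=0$ borrowed from Theorem \ref{thm: jump average}, and the same identification $\beta_1=c_2/\alpha-\eta_1$, $\beta_2=e^{\eta_2}$ leading to the bound \eqref{eq: bound for tracking constrained}. Your explicit bootstrap via $T^\ast=\sup\{t:\mynorm{e(\tau)}\leq r_2\ \text{on}\ [t_0,t]\}$ is a welcome tightening of the paper's more informal consistency check (the paper simply verifies a posteriori that the derived bound keeps $e(t)$ in $\mathcal{B}_{r_2}(0)$), and both then invoke Lemma \ref{lemma: invariance} for feasibility and Theorem \ref{lemma: exist and unique} for existence and uniqueness.
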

\begin{proof}
Consider $V(e)=\frac{1}{2}\mynorm{e}^2: \mathcal{B}_{r_2}(0) \rightarrow \mathbb{R}$ as the Lyapunov function for the system \eqref{eq: ode3 change variable}. Because of Lemma \ref{lemma: invariance}, any solution of  \eqref{eq: ode3 change variable} stating in ${\mathcal{M}(t_1)}$ will remain in $\mathcal{M}(t)$ for all $t\geq {t_1}$. The derivative of $V(e)$ along the trajectories of \eqref{eq: ode3 change variable} can be obtained~as 
\begin{align*}
\dot{V}&=e(t)^\top \Big(-\frac{1}{\alpha}\mathcal{P}(e(t)+h_2(t),t)\nabla_x f(e(t)+h_2(t),t)\\
&- \mathcal{Q}(e(t)+h_2(t),t)g^\prime(t)(e(t)+h_2(t),t) - \dot{h}_2^g(t)\Big),\\
&\leq -\frac{c}{\alpha}\mynorm{e(t)}^2 + \delta_1(t)\mynorm{e(t)}^2+(\delta_2(t)+\gamma(t))\mynorm{e(t)} 
\end{align*}
Since $V(e)=\frac{1}{2}\mynorm{e}^2$, one can derive an upper bound on $\dot{V}$ as
\begin{equation*}
    \dot{V} \leq -\Big[ \frac{2c}{\alpha} - 2\delta_1(t)\Big]V + (\delta_2(t)+\gamma(t)) \sqrt{2V}
\end{equation*}

Using the same proof procedure as in Theorem \ref{thm: jump average} and by taking $\beta_1(\alpha)= \frac{c}{\alpha}-\eta_1>0$ and $\beta_2=e^{\eta_2}\geq 1$, it can be shown that
\begin{ealign} \label{eq:bound in constrained tracking}
  \mynorm{e(t)}&\leq \beta_2\mynorm{e_1}e^{-\beta_1(\alpha) (t-t_1)}\\
  &+\beta_2\int_{t_1}^t e^{-\beta_1(\alpha) (t-\tau)}( \delta_2 (t)+\gamma(t))d\tau
\end{ealign}
To make the bound in \eqref{eq:bound in constrained tracking} valid, we must ensure that ${e(t)}\in\mathcal{B}_{r_2}(0)$ for all $\comment{t\geq t_1}$. Note that 
\begin{align*}
  \mynorm{e(t)}  \leq& \beta_2\mynorm{e_1}e^{-\beta_1(\alpha) (t-t_1)}+\frac{\beta_2}{\beta_1(\alpha)}(1-e^{-\beta_1(\alpha) (t-\tau)})\\
  &\sup_{t\geq t_0}( \delta_2 (t)+\gamma(t))\\
  \leq & \max \Big\{\beta_2\mynorm{e_1},\frac{\beta_2}{\beta_1(\alpha)}  \sup_{t\geq t_0}( \delta_2 (t)+\gamma(t)) \Big\}
\end{align*}
It can be verified that the condition ${e(t)}\in\mathcal{B}_{r_2}(0)$ will be satisfied if \eqref{eq: condition for tracking constrained} holds. \comment{Furthermore, by ${e(t)}\in\mathcal{B}_{r_2}(0)$ and Theorem \ref{lemma: exist and unique}, there must exist a unique solution for \eqref{eq: ODE3} for all $t\geq t_1$}.
\end{proof}
\begin{remark}
The inequality \eqref{eq: bound for tracking constrained} implies that the smaller the regularization parameter $\alpha$ is,  the smaller the tracking error $x(t,t_0,x_0)-h_2(t)$ is and the faster $x(t,t_0,x_0)$ converges to the neighbourhood of $h_2(t)$.
\end{remark}

\begin{remark}
In the case that the local minimum trajectory $h_2(t)$ is a constant, the upper bound on $\alpha$ simply becomes $\alpha<\infty$. This implies that if $h_2(t)$ is constant, then it will be perfectly tracked with any regularization parameter and can not be escaped by tuning the regularization parameter.
\end{remark}
\begin{remark}
In the unconstrained case or the case with the time-invariant constraints, $\delta_1(t)$ and $\delta_2(t)$ in \eqref{eq: perturbation 1} simply become zero. Then, the tracking conditions in \eqref{eq: condition for tracking constrained} become $\mynorm{x_0-h_2(0)}\leq r_2$ and $\alpha \leq \frac{c_2r_2}{\sup_{t\geq t_0} \gamma(t)}$, and the tracking error bound in \eqref{eq: bound for tracking constrained} becomes
\begin{align*}
 \nonumber \mynorm{e(t)}\leq& \mynorm{e_1}e^{-\frac{c_2}{\alpha} (t-t_1)}+\int_{t_1}^t e^{-\frac{c_2}{\alpha} (t-\tau)}\gamma(t)d\tau\\
 \leq & \frac{\alpha\sup_{t\geq \comment{t_1}}\gamma(t)}{c_2}
\end{align*}
\end{remark}
\comment{
\begin{remark}
After the solution of \eqref{eq: ODE3} has escaped the spurious local trajectories and started tracking the globally minimum trajectory, one may use the state-of-the-art tracking methods in \cite{tang2018running} and \cite{zavala2009line} to improve the tracking of the globally minimum trajectory.
\end{remark}}
\subsection{Escaping}
Combining the results of jumping and tracking immediately yields a sufficient condition on escaping from one local minimum trajectory to a more desirable local (or global) minimum trajectory. The proof is omitted for brevity.

\begin{theorem}[Sufficient conditions for escaping from $h_1(t)$ to $h_2(t)$] \label{thm: escape}
\comment{Given two local minimum trajectories $h_1(t)$ and $h_2(t)$, suppose that the Lagrange function $L(x,\lambda,t)$ with $\lambda$ given in \eqref{eq: lambda} is {locally $(c_2,r_2)$-one-point strongly convex}} with respect to $x$ around $h_2(t)$ in the time-varying feasible set $\{e\in \mathbb{R}^n: e+h_2(t)\in \mathcal{M}(t), \mynorm{e}\leq r_2\}$ and let $\mathcal{B}_v(h_1(t_1))\subseteq RA^{\mathcal{M}(t_1)}(h_1(t_1))$. Under the conditions of Theorem \ref{thm: jump} or \ref{thm: jump average}, if \eqref{eq: perturbation 1}-\eqref{eq: condition for tracking constrained} hold, then the solution of \eqref{eq: ODE3} will $(v,r_2)$-escape from $h_1(t)$ to $h_2(t)$ after $t\geq t_2$.
\end{theorem}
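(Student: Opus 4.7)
The plan is to chain Theorem~\ref{thm: jump} (or Theorem~\ref{thm: jump average}) together with Theorem~\ref{thm: Sufficient condition for tracking constrained} by treating the jumping step as producing a suitable initial condition for the tracking step. The jumping theorem guarantees that every trajectory starting in $\mathcal{B}_v(h_1(t_1))\cap\mathcal{M}(t_1)$ arrives inside $\mathcal{B}_{r_2}(h_2(t_2))\cap\mathcal{M}(t_2)$ at time $t_2$; the tracking theorem then, when re-instantiated with initial time $t_2$ and initial state $x(t_2,t_1,x_0)$, yields $\mynorm{x(t,t_1,x_0)-h_2(t)}\leq r_2$ for all $t\geq t_2$.

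Concretely, I would proceed in three steps. First, invoke Theorem~\ref{thm: jump} (or~\ref{thm: jump average}) under the stated hypotheses on the dominant trajectory and the time interval $[t_1,t_2]$, obtaining the landing inclusion $x(t_2,t_1,x_0)\in \mathcal{B}_{r_2}(h_2(t_2))\cap\mathcal{M}(t_2)$ for every feasible $x_0\in \mathcal{B}_v(h_1(t_1))$. Second, apply Theorem~\ref{thm: Sufficient condition for tracking constrained} with the substitutions $t_1\leftarrow t_2$ and initial state $x(t_2,t_1,x_0)$; the perturbation bounds \eqref{eq: perturbation 1}--\eqref{eq: perturbation 2} on $\mathcal{Q}g^\prime$ and the bound \eqref{eq: tracking upper bound on a} on $\alpha$ continue to hold on $[t_2,\infty)$, so the estimate \eqref{eq: bound for tracking constrained} delivers containment in $\mathcal{B}_{r_2}(h_2(t))\cap\mathcal{M}(t)$ for all $t\geq t_2$. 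Third, I would verify the two region-of-attraction clauses in the $(v,r_2)$-escape definition: the $h_1$-side inclusion $\mathcal{B}_v(h_1(t_1))\cap\mathcal{M}(t_1)\subseteq RA^{\mathcal{M}(t_1)}(h_1(t_1))$ is given, while the $h_2$-side inclusion $\mathcal{B}_{r_2}(h_2(t))\cap\mathcal{M}(t)\subseteq RA^{\mathcal{M}(t)}(h_2(t))$ follows from the local $(c_2,r_2)$-one-point strong convexity of $L$ around $h_2(t)$ via a time-frozen Lyapunov argument on $\dot{\tilde x}=-\mathcal{P}(\tilde x,t)\nabla_x f(\tilde x,t)$ with $V(\tilde x)=\tfrac{1}{2}\mynorm{\tilde x-h_2(t)}^2$.

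The main obstacle I anticipate is not conceptual but quantitative: matching the constants at the interface between the two theorems. The jumping step only ensures that the landing error is at most $r_2$, whereas Theorem~\ref{thm: Sufficient condition for tracking constrained} asks for an initial error of at most $r_2/e^{\eta_2}$ in order for the bound \eqref{eq:bound in constrained tracking} to remain in $\mathcal{B}_{r_2}(0)$. To bridge this gap, I would tighten the target radius in the jumping step to $r_2/e^{\eta_2}$, which is permissible provided the region $D_{v,\rho,r_2}$ and the lower bound on $t_2-t_1$ in \eqref{eq: t_2-t_1 uniform} (or \eqref{eq: t_2-t_1}) are re-read with this smaller target; equivalently, one may enlarge the assumed radius of one-point strong convexity to $r_2e^{\eta_2}$. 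This is a cosmetic adjustment of constants rather than a new argument, which is presumably why the authors omit the proof for brevity.
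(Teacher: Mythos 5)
Your proposal matches exactly what the paper intends: the authors explicitly say the result follows by ``combining the results of jumping and tracking'' and omit the proof, and your three-step chain (jump via Theorem~\ref{thm: jump} or~\ref{thm: jump average}, then re-instantiate Theorem~\ref{thm: Sufficient condition for tracking constrained} with initial time $t_2$, then verify the two region-of-attraction inclusions in the definition of $(v,r_2)$-escape) is the natural realization of that sketch. Your Lyapunov verification of $\mathcal{B}_{r_2}(h_2(t))\cap\mathcal{M}(t)\subseteq RA^{\mathcal{M}(t)}(h_2(t))$ from the one-point strong convexity of $L$, using $V(\tilde x)=\tfrac12\mynorm{\tilde x - h_2(t)}^2$ along the frozen-time projected gradient flow and the identity $\mathcal{P}(x,t)\nabla_x f(x,t)=\nabla_x L(x,\lambda(x,t),t)$, is also correct. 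The one point of substance you add over the paper's sketch is the interface mismatch: the jumping theorems land the state only in $\mathcal{B}_{r_2}(h_2(t_2))$, whereas condition \eqref{eq: condition for tracking constrained} demands an initial error of at most $r_2/e^{\eta_2}$. This is a genuine gap in the paper's phrasing (it silently assumes $\eta_2=0$ or leaves the radii unreconciled), and your fix --- shrink the jump target radius to $r_2/e^{\eta_2}$ (adjusting the lower bound on $t_2-t_1$ accordingly, e.g.\ replacing $r_2-\rho$ by $r_2/e^{\eta_2}-\rho$ in \eqref{eq: t_2-t_1 uniform} and \eqref{eq: t_2-t_1}) or enlarge the region of one-point strong convexity to radius $r_2 e^{\eta_2}$ --- is the correct way to reconcile the constants. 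So this is the same route as the paper, executed more carefully.
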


\subsection{Discussions}

\textbf{Adaptive inertia}: To leverage the potential of the time-varying perturbation 
$\alpha \mathcal{Q}(e(t)+h_2(t),t)g^\prime(e(t)+h_2(t),t) +\alpha \dot{h}_2(t)$
in re-shaping the landscape of the Langrange function or the objective function to become locally one-point strongly convex in $x$ over a large region, the regularization parameter $\alpha$ should be selected relatively large. On the other hand, to ensure that the solution of \eqref{eq: ode3 change variable} and \eqref{eq: ode2 change variable h2} will end up tracking a desirable local (or global) minimum trajectory, Theorem \ref{thm: Sufficient condition for tracking constrained} prescribes small values for  $\alpha$. In practice, especially when the time-varying objective function has many spurious shallow minimum trajectories, this suggests using a relatively large regularization parameter $\alpha$ at the beginning of the time horizon to escape  spurious shallow minimum trajectories and then switching to a relative small regularization parameter $\alpha$ for reducing the ultimate tracking error bound.
\vspace{2mm}

\noindent\textbf{Sequential jumping}: When the time-varying optimization problem has many local minimum trajectories, the solution of \eqref{eq: ODE3} or \eqref{eq: ODE2} may sequentially jump from one local minimum trajectory to a better local minimum trajectory. To illustrate this concept, consider the local minimum trajectories $h_1(t),h_2(t),...,h_{m}(t)$, where $h_{m}(t)$ is a global trajectory. Assume that there exists a sequence of time intervals $[t_1^i,t_2^i]$ for $i=1,2,\ldots,m-1$ such that  the conditions of Theorem \ref{thm: jump} or  \ref{thm: jump average} are satisfied for $h_i(t)$ and $h_{i+1}(t)$ during each time interval. Then, by sequentially deploying Theorem \ref{thm: jump} or  \ref{thm: jump average}, it can be concluded that the solution of \eqref{eq: ODE3} or \eqref{eq: ODE2} will jump from $h_1(t)$ to $h_{m}(t)$ after $t\geq t_2^m$. Furthermore, if $h_{m}(t)$ can be tracked with the given $\alpha$,  the solution of \eqref{eq: ODE3} or \eqref{eq: ODE2} will escape from $h_1(t)$ to $h_{m}(t)$ after $t\geq t_2^m$.

\section{Numerical Examples}\label{sec:numercial example}
\begin{figure}[ht]
\centering
\includegraphics[width=0.8 \linewidth]{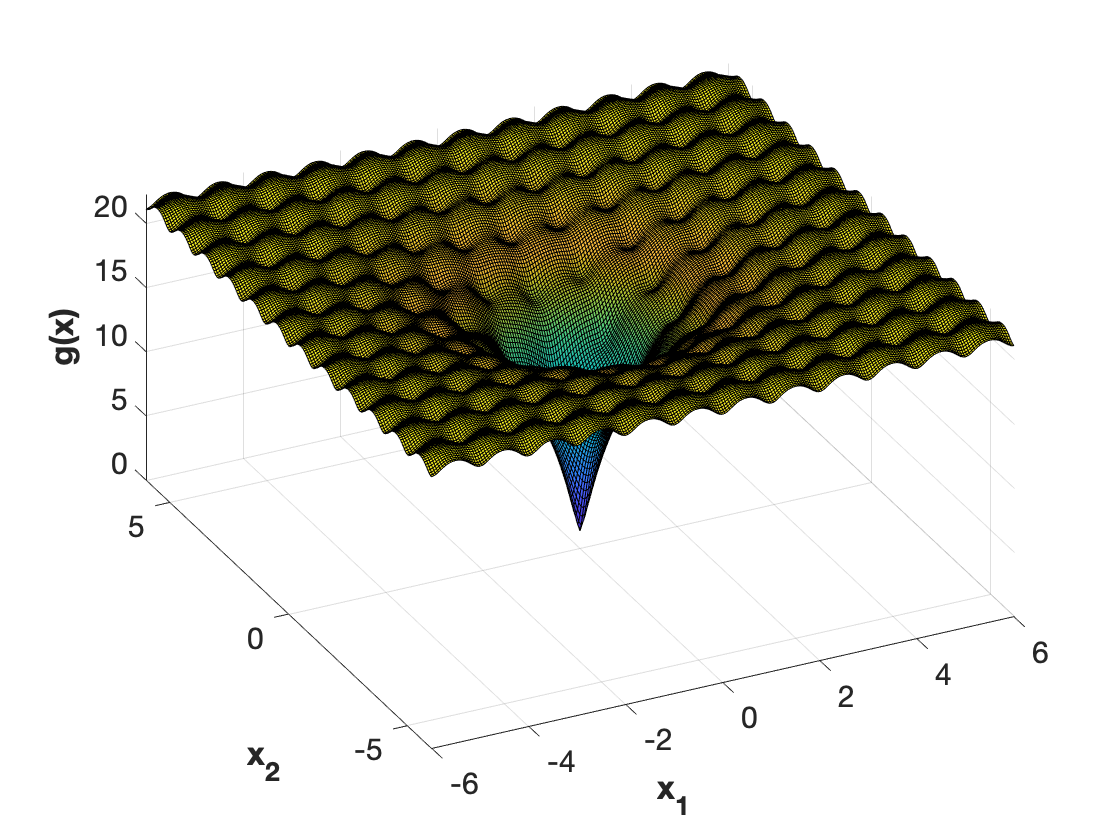}
\caption{Illustration of Example \ref{ex:ackley}.}
\label{fig:ackley_landscape}
\end{figure}
\begin{example} \label{ex:ackley} 
\emph{Consider the non-convex function}\end{example}
\vspace{-3mm}
\begin{align*}
  \bar{f}(x)=&0.5e+20e^{-d}-20e^{-\sqrt{0.5(x_1^2+x_2^2)+d^2}}\\
  &-0.5e^{(0.5(\cos(2\pi x_1)+\cos(2\pi x_2)))}.
\end{align*}
\comment{This function has a global minimum  at $(0,0)$ with the optimal value $0$ and many spurious local minima. Its landscape is shown in Figure \ref{fig:ackley_landscape}. When $d=0$, this function is called the Ackley function \cite{Ackley1987}, which is a benchmark function for  global optimization algorithms. To make this function twice continuously differentiable, we choose $d=0.01$. }

\comment{Consider the time-varying objective function $f(x,t)=\bar{f}(x-z(t))$ and the time-varying constraint $g(x,t)=(x_1-z_1(t))-1/2(x_2-z_2(t))^2=0$, where
$ z(t)=[24\sin(t), \cos(t)]^\top$.
This constrained time-varying optimization problem has the global minimum trajectory $[0,0]^\top+z(t)$ and many spurious local minimum trajectories. 
Two local minimum trajectories are $h_1(t)=[1.92,1.96]^\top+z(t)$ and $h_2(t)=[0,0]^\top+z(t)$. It can be shown that $L(x,\lambda,t)$ is locally $(20,0.5)$-one-point strongly convex with respect to $h_2(t)$. }

\comment{We take $D_{v,\rho,r_2}=D_{0.04,0.01,1}=[-0.1,2]\times[-0.1,2]$ in Definition \ref{def: dominant trajectory}. The condition in \eqref{eq: invariance-like set} can be verified by checking the signs of the derivatives of $e_1(t)$ and $e_2(t)$ along the dynamics \eqref{eq: ode3 change variable} on the boundary points of $D_{0.04,0.01,1}\cap \mathcal{M}^{h_2}(t)$.
Furthermore, \eqref{eq: strong convex for jump average} is satisfied for $w=1$. Thus, $h_2(t)$ is a $(0.2,1)$-dominant trajectory with respect to $h_1(t)$ during $[0,\frac{\pi}{8}]$ over the region $D_{0.04,0.01,1}$. }

\comment{Regarding Theorem \ref{thm: jump}, if we select $\theta=0.2$, the inequality \eqref{eq: t_2-t_1} is satisfied for $\alpha=0.2$ and $t_2-t_1=\pi/8$. Thus, the solution of \eqref{eq: ODE3} will $(0.04,0.5)$-jump from $h_1(t)$ to $h_2(t)$. Regarding Theorem \ref{thm: Sufficient condition for tracking constrained}, $\delta_1$ and $\delta_2$ in the inequality \eqref{eq: perturbation 1} can be taken as $0$ and $24\sqrt{2}\cos(t)+\sqrt{2}\sin(t)$, respectively. Then the inequality \eqref{eq: tracking upper bound on a} reduces to
$\alpha\leq \frac{10}{\sqrt{2(24^2+1)}}\approx 0.29$, which is satisfied by $\alpha=0.2$. Thus, the solution of \eqref{eq: ODE3} will $0.5$-track $h_2(t)$. Putting the above findings together, we can conclude that the solution of \eqref{eq: ode3 change variable} will $(0.04,0.5)$-escape from $h_1(t)$ to $h_2(t)$.}

\comment{In addition, by choosing the inertia parameter $\alpha=0.2$, the simulation shows that for 1000 runs of random initialization with $x_2(0)-z(0)\in[-5,5]$ and $x_1(0)$ determined by the equality constraint, all solutions of the corresponding \eqref{eq: ODE3} will sequentially jump over the local minimum trajectories and end up tracking the global trajectory after $t\geq 5\pi$. }

\section{Conclusion} \label{sec:conclusion}
In this work, we study the landscape of time-varying nonconvex optimization problems. The objective is to understand when simple local search algorithms can find (and track) time-varying global solutions of the problem over time. We introduce a time-varying projected gradient flow system with controllable inertia as a continuous-time limit of the optimality conditions for  discretized sequential optimization problems with proximal regularization and online updating scheme. 
Via a change of variables, the time-varying projected gradient flow system is regarded as a composition of a time-varying projected gradient term, a time-varying constraint-driven term and an inertia term due to the time variation of the local minimum trajectory. We show that the time-varying perturbation term due to the inertia encourages the exploration of the state space and re-shapes the landscape by potentially making it one-point strongly convex over a large region during some time interval. We introduce the notions of jumping and escaping, and use them to  develop sufficient conditions under which the time-varying solution escapes from a poor local trajectory to a better (or global) minimum trajectory over a finite time interval. We illustrate in a benchmark example with many shallow minimum trajectories that the natural time variation of the problem enables escaping  spurious local minima over time. 
Avenues for future work include the characterization of the class of problems in which all spurious local minimum trajectories are shallow compared with the global minimum trajectory. 


%

\appendices
\section{} \label{appendix: local lip of ODE3}
\begin{lemma} \label{lemma: local lip of ODE3}
Under Assumptions \ref{assumption:smoothness}-\ref{assumption: regular of constraints}, 
if $f(x,t)$, $g_1(x,t),\ldots,g_m(x,t)$ are twice continuously differentiable in $x$ on $D\times[a,b]$ for some domain $D\in \mathbb{R}^n$, then the function $\frac{1}{\alpha}\mathcal{P}(x,t)\nabla_x f(x,t)+\mathcal{Q}(x,t)g^\prime(x,t)$ is locally Lipschitz continuous in $x$ on $\{(x,t)\in D\times[a,b]:x\in \mathcal{M}(t)\}$.
\end{lemma}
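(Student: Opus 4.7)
The plan is to decompose the vector field $\frac{1}{\alpha}\mathcal{P}(x,t)\nabla_x f(x,t)+\mathcal{Q}(x,t)g^\prime(x,t)$ into building blocks that are individually $C^1$ in $x$, and then invoke the fact that sums and products of $C^1$ functions are $C^1$, hence locally Lipschitz. The only delicate piece is the matrix inverse $(\mathcal{J}_g\mathcal{J}_g^\top)^{-1}$, which needs to be controlled using Assumption~\ref{assumption: regular of constraints} together with a compactness argument.

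First, I would observe that Assumption~\ref{assumption:smoothness} yields that $\nabla_x f(x,t)$ and $\mathcal{J}_g(x,t)$ are continuously differentiable in $x$ on $D\times[a,b]$, and that $g^\prime(x,t)=\partial g/\partial t$ is also continuously differentiable in $x$ (since $g$ is twice continuously differentiable in each argument, the mixed partials $\partial/\partial x_i\,\partial g/\partial t$ exist and are continuous). Consequently, the matrix-valued map $(x,t)\mapsto\mathcal{J}_g(x,t)\mathcal{J}_g(x,t)^\top$ is $C^1$ in $x$.

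Next, I would handle the inverse. Fix an arbitrary point $(x_0,t_0)$ in the set $S:=\{(x,t)\in D\times[a,b]:x\in\mathcal{M}(t)\}$. By Assumption~\ref{assumption: regular of constraints}, $\mathcal{J}_g(x_0,t_0)\mathcal{J}_g(x_0,t_0)^\top$ is invertible, so $\det(\mathcal{J}_g\mathcal{J}_g^\top)$ is continuous and nonzero at $(x_0,t_0)$; by continuity there is an open neighborhood $U\subset D\times[a,b]$ of $(x_0,t_0)$ on which $\det(\mathcal{J}_g\mathcal{J}_g^\top)$ is bounded away from zero. On such a neighborhood, matrix inversion $A\mapsto A^{-1}$ is smooth in $A$, so $(x,t)\mapsto(\mathcal{J}_g(x,t)\mathcal{J}_g(x,t)^\top)^{-1}$ is $C^1$ in $x$. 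This is the key step and the main obstacle, since local Lipschitzness cannot be derived without this uniform lower bound on the determinant.

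Finally, I would assemble the pieces. On $U$, both
\begin{equation*}
\mathcal{P}(x,t)=I_n-\mathcal{J}_g(x,t)^\top\bigl(\mathcal{J}_g(x,t)\mathcal{J}_g(x,t)^\top\bigr)^{-1}\mathcal{J}_g(x,t)
\end{equation*}
and $\mathcal{Q}(x,t)=\mathcal{J}_g(x,t)^\top\bigl(\mathcal{J}_g(x,t)\mathcal{J}_g(x,t)^\top\bigr)^{-1}$ are sums and products of functions that are $C^1$ in $x$, hence themselves $C^1$ in $x$ on $U$. Multiplying by $\nabla_x f$ and $g^\prime$ respectively and adding preserves $C^1$ regularity, so the full expression is $C^1$ in $x$ on $U$. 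A $C^1$ function on an open set is locally Lipschitz; restricting to $U\cap S$ preserves this, and since $(x_0,t_0)\in S$ was arbitrary, the function is locally Lipschitz in $x$ on $S$. No separate Lipschitz constant computation is needed, only the standard bound by the supremum of the Jacobian over a compact sub-neighborhood.
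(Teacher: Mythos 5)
Your proposal is correct and takes essentially the same approach as the paper: decompose the vector field into constituent maps, establish that each is $C^1$ in $x$ (with the matrix inverse being the delicate piece, controlled via Assumption~\ref{assumption: regular of constraints}), and conclude local Lipschitz continuity from $C^1$ regularity. If anything, your handling of the inverse is slightly tighter — you bound the determinant away from zero on a neighborhood of each point by continuity, whereas the paper asserts a uniform positive lower bound $c$ on $\sigma_{\min}(\mathcal{J}_g\mathcal{J}_g^\top)$ without a compactness argument to justify it; since only local Lipschitz continuity is needed, your local argument suffices and avoids that gap.
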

\begin{proof}
We first show that the matrix $\mathcal{J}_g(x,t)\mathcal{J}_g(x,t)^\top$ is positive definite over the time-varying feasible region $\mathcal{M}(t)$ for all $t\in[a,b]$. Since, under Assumption \ref{assumption: regular of constraints}, $\mathcal{J}_g(x,t)$ is full row rank for $x\in \mathcal{M}(t)$ for all $t\in[a,b]$, the null space of $\mathcal{J}_g(x,t)$ is $0$. Thus, $\mathcal{J}_g(x,t) x=0$ if and only if $x=0$. Therefore, $x\mathcal{J}_g(x,t)\mathcal{J}_g(x,t)^\top x^\top=(x\mathcal{J}_g(x,t))(x\mathcal{J}_g(x,t))^\top >0$ for all $x\neq 0$. Therefore,  $\mathcal{J}_g(x,t)\mathcal{J}_g(x,t)^\top$ is positive definite. 
Denote $\sigma_{\min}(\mathcal{J}_g(x,t)\mathcal{J}_g(x,t)^\top)$ as the minimum eigenvalue of $\mathcal{J}_g(x,t)\mathcal{J}_g(x,t)^\top$. Then,there exists a positive constant $c$ such that $\sigma_{\min}(\mathcal{J}_g(x,t)\mathcal{J}_g(x,t)^\top)\geq c$.
By the chain rule and the twice continuously differentiability of $g_k(x,t)$'s in $x\in D$ for $t\in[a,b]$, we know that $G(x,t):=\mathcal{J}_g(x,t)\mathcal{J}_g(x,t)^\top$ is continuously differentiabile in $x\in D$ for $t\in[a,b]$. Next, we show that $G^{-1}$ is also continuously differentiabile in $x$ on $\{(x,t)\in D\times[a,b]:x\in \mathcal{M}(t)\}$.
Let $x_i$ be the $i$-th component of the vector $x$. By taking the derivative of the identity $I=G(x,t)G(x,t)^{-1}$ with respect to $x_i$, we obatin
\begin{equation*}
\frac{\partial}{\partial x_i}G(x,t)^{-1}=-   G(x,t)^{-1} \Big(\frac{\partial}{\partial x_i}G(x,t)\Big)G(x,t)^{-1}
\end{equation*}
and
\begin{align*}
\mynorm{\frac{\partial}{\partial x_i}G(x,t)^{-1}}&\leq \mynorm{G(x,t)^{-1}}^2 \mynorm{\frac{\partial}{\partial x_i}G(x,t)}\\
&\leq \frac{1}{c^2}\mynorm{\frac{\partial}{\partial x_i}G(x,t)}<\infty
\end{align*}
Since the inversion operator is continuous by the Cayley–Hamilton theorem, $G(x,t)$ is continuously differentiable in $x$ and $\mynorm{\frac{\partial}{\partial x_i}G(x,t)^{-1}}$ is bounded, we know that $\frac{\partial}{\partial x_i}G(x,t)^{-1}$ is well-defined and continuous over $D$ for all $i=1,\ldots,n$. Therefore, $G(x,t)^{-1}$ is continuously differentiable in $x$ on $\{(x,t)\in D\times[a,b]:x\in \mathcal{M}(t)\}$. Consequently,
 $\mathcal{P}(x,t)$ and $\mathcal{Q}(x,t)$ are continuously differentiable in $x$ on $\{(x,t)\in D\times[a,b]:x\in \mathcal{M}(t)\}$. Since $f(x,t)$ is twice continuously differentiable in $x$ and $g_k(x,t)$'s are twice continuously differentiable in $x\in D$ for $t\in[a,b]$, it holds that the function $\frac{1}{\alpha}\mathcal{P}(x,t)\nabla_x f(x,t)+\mathcal{Q}(x,t)g^\prime(x,t)$ is continuously differentiable in $x$. Hence, because of  \cite[Theorem 3.2]{khalil2002nonlinear}, it is locally Lipschitz continuous in $x$ on $\{(x,t)\in D\times[a,b]:x\in \mathcal{M}(t)\}$.
\end{proof}


\section*{Acknowledgment}
This work was supported by grants from ARO, AFOSR, ONR and NSF. 

\ifCLASSOPTIONcaptionsoff
  \newpage
\fi



%

\bibliographystyle{IEEEtran}
\bibliography{IEEEabrv,reference}

%
\vspace{-1cm}
\begin{IEEEbiography}[{\includegraphics[width=1in,height=1.25in,clip,keepaspectratio]{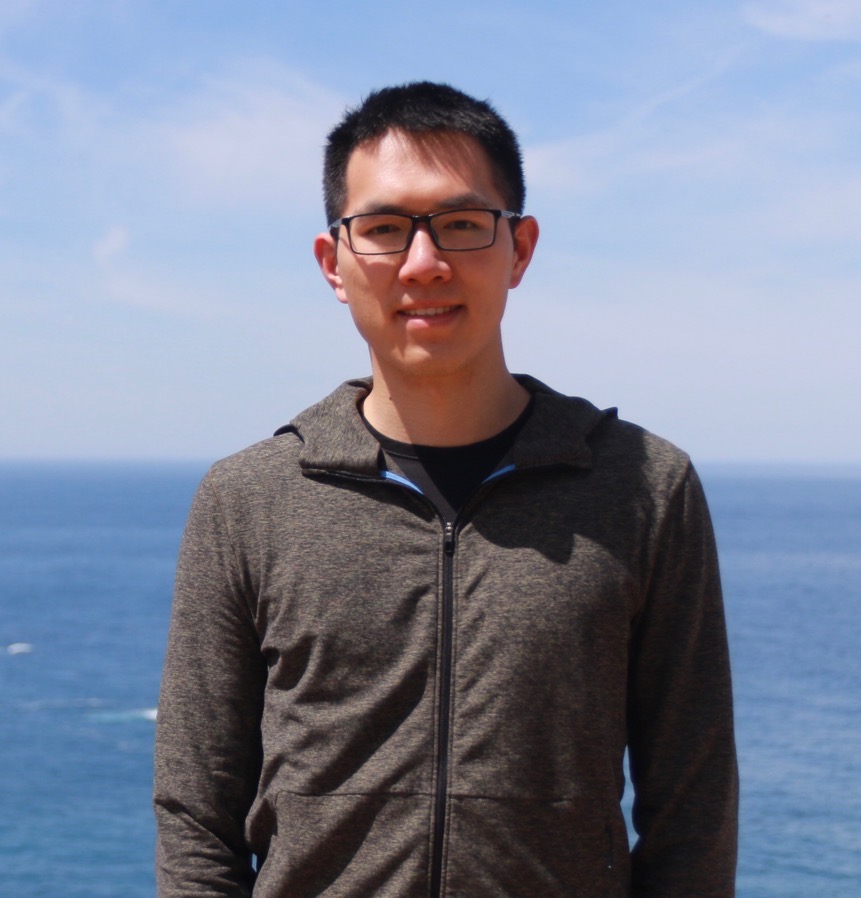}}]{Yuhao Ding} is currently working toward the Ph.D. degree in Industrial Engineering and Operations Research at
the University of California, Berkeley,
CA, USA. He obtained the B.E. degree in Aerospace Engineering from Nanjing University of Aeronautics and Astronautics in 2016, and the M.S. degree in Electrical and Computer Engineering from University of Michigan, Ann Arbor in 2018.
\end{IEEEbiography}
\vspace{-1cm}
\begin{IEEEbiography}[{\includegraphics[width=1in,height=1.25in,clip,keepaspectratio]{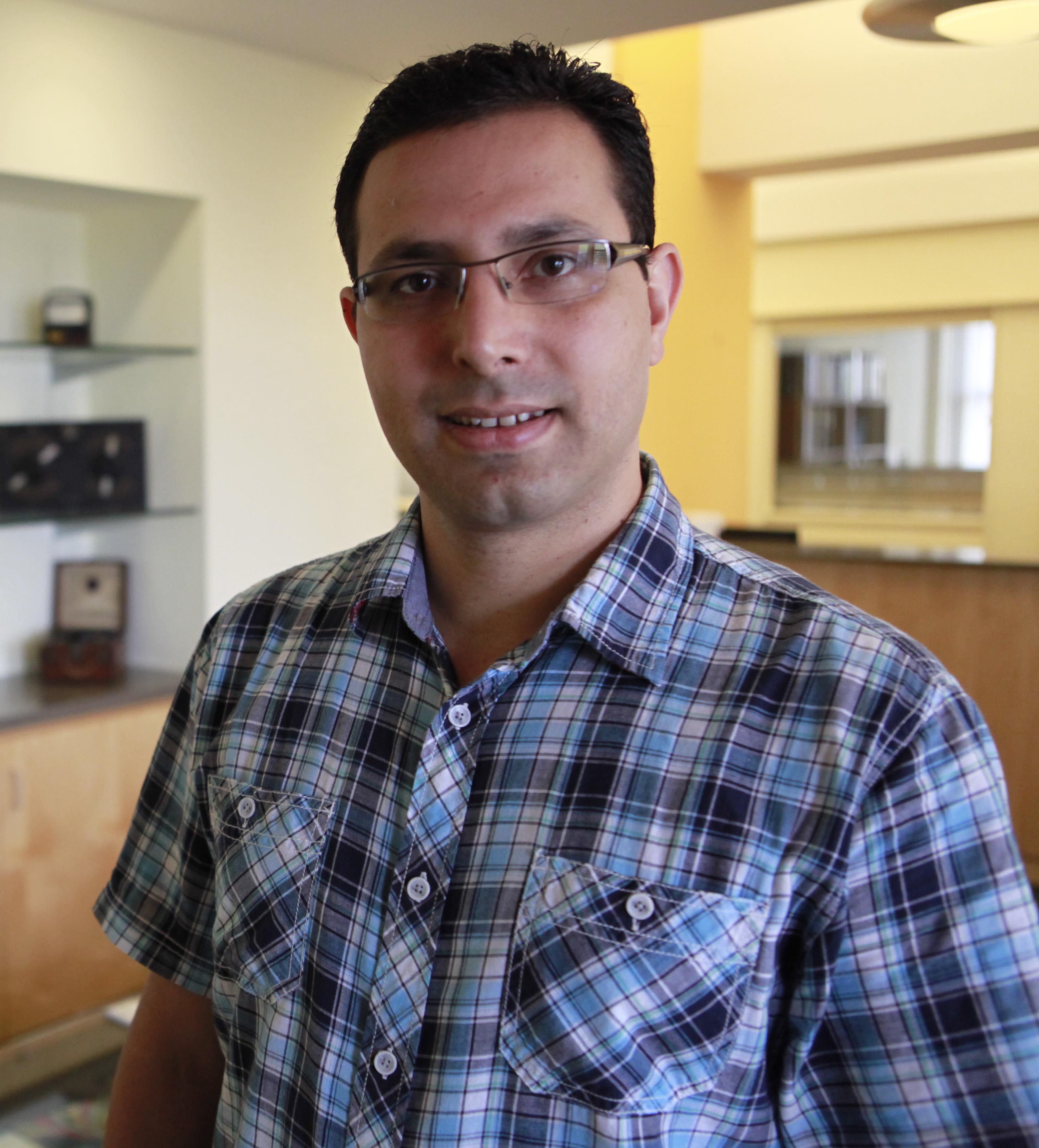}}]{Javad Lavaei} is currently an Associate Professor in the
Department of Industrial Engineering and Operations Research at the University of California, Berkeley,
CA, USA. He has
worked on different interdisciplinary problems in
power systems, optimization theory, control theory,
and data science. 
He is an associate editor of the IEEE Transactions on Automatic Control, the
IEEE Transactions on Smart Grid, and the IEEE Control System
Letters. He serves on the conference editorial boards of the IEEE Control
Systems Society and European Control Association.\end{IEEEbiography}
\vspace{-1cm}
\begin{IEEEbiography}[{\includegraphics[width=1in,height=1.25in,clip,keepaspectratio]{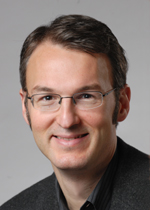}}]{Murat Arcak} is currently a Professor with the Department of Electrical Engineering and Computer
Sciences, University of California, Berkeley, CA,
USA. 
His research interests include dynamical systems and control theory with applications to
synthetic biology, multiagent systems, and transportation.
Prof. Arcak was a recipient of the CAREER Award from the National
Science Foundation in 2003, the Donald P. Eckman Award from the
American Automatic Control Council in 2006, the Control and Systems
Theory Prize from the Society for Industrial and Applied Mathematics
(SIAM) in 2007, and the Antonio Ruberti Young Researcher Prize from
the IEEE Control Systems Society in 2014. He is a member of SIAM.
\end{IEEEbiography}




\end{document}